\newtheorem{thm}{Theorem}[section]
\newtheorem{cor}[thm]{Corollary}
\newtheorem{lem}[thm]{Lemma}
\newtheorem{prop}[thm]{Proposition}
\theoremstyle{definition}
\newtheorem{defn}[thm]{Definition}
\newtheorem{example}[thm]{Example}
\theoremstyle{remark}
\newtheorem{rem}[thm]{Remark}
\numberwithin{equation}{section}
\begin{document}
\title[Asymptotically Weyl almost periodic functions...]{Asymptotically Weyl almost periodic functions in Lebesgue spaces with variable exponents}

\author{Marko Kosti\' c}
\address{Faculty of Technical Sciences,
University of Novi Sad,
Trg D. Obradovi\' ca 6, 21125 Novi Sad, Serbia}
\email{marco.s@verat.net}

{\renewcommand{\thefootnote}{} \footnote{2010 {\it Mathematics
Subject Classification.} Primary: 43A60 Secondary: 35B15, 47D06.
\\ \text{  }  \ \    {\it Key words and phrases.} Asymptotically Weyl almost periodic functions with variable exponents, Weyl ergodic components with variable exponents,
abstract degenerate fractional differential inclusions.
\\  \text{  }  \ \ The author is partially supported by grant 174024 of Ministry
of Science and Technological Development, Republic of Serbia.}}

\begin{abstract}
In this paper, we introduce and analyze several different notions of 
Weyl almost periodic functions and Weyl ergodic components in Lebesgue spaces with variable exponent $L^{p(x)}.$ 
We investigate the invariance of  (asymptotical) Weyl almost periodicity with variable exponent under the actions of convolution products, providing also some illustrative applications to abstract fractional differential inclusions in Banach spaces. 
The introduced classes of generalized (asymptotically) Weyl almost periodic functions are new even in the case that the function $p(x)$ has  a constant value $p\geq 1,$ provided that the functions $\phi(x)$ and $F(l,t)$ under our consideration satisfy $\phi(x)\neq x$ or $F(l,t)\neq l^{(-1)/p}.$ 
\end{abstract}
\maketitle

\section{Introduction and preliminaries}

In our joint papers with T. Diagana \cite{AP}-\cite{toka-marek-prim}, we have recently introduced and analyzed several important classes of (asymptotically) Stepanov almost periodic functions and 
(asymptotically) Stepanov almost automorphic functions
in the Lebesgue spaces with variable exponents (see also the earlier papers \cite{m-zitane}-\cite{m-zitane-prim} by T. Diagana and M. Zitane). 
The main purpose of paper \cite{AP} was to consider generalized almost periodicity that intermediate Stepanov and Bohr concept. On the other hand,
the classes introduced by H. Weyl \cite{weyl-initial} and A. S. Kovanko \cite{kovanko} are enormously larger compared with the class of Stepanov almost periodic functions and 
the main purpose of papers \cite{irkutsk}-\cite{brazil} has been to initiate the study of generalized (asymptotical) almost periodicity that intermediate Stepanov and Weyl concept.
In these papers, we have introduced the class of Stepanov
$p$-quasi-asymptotically almost periodic functions  and proved that this class
contains all asymptotically Stepanov $p$-almost periodic functions and makes a subclass
of the class consisting of all Weyl $p$-almost periodic functions ($p\in [1,\infty)$), taken in the
sense of Kovanko's approach \cite{kovanko}.
The main aim of this paper is to continue the research studies raised in \cite{fedorov-novi} and \cite{weyl}-\cite{brazil} by investigating several various classes of asymptotically Weyl  almost periodic functions in Lebesgue spaces with variable exponents
$L^{p(x)}$. 

The organization and main ideas of paper can be briefly described as follows. After introducing the basic concepts, in Subsection \ref{karambita} we recall the main definitions and results about Lebesgue spaces with variable exponents
$L^{p(x)}.$ In Definition \ref{w-orlicz}-Definition \ref{w-orliczaq2}, we introduce the classes of (equi-)Weyl-$(p,\phi,F)$-almost periodic functions and (equi-)Weyl-$(p,\phi,F)_{i}$-almost periodic functions, where $i=1,2.$ The main aim of Proposition \ref{stevate} is to clarify some inclusions between these spaces provided that the function $\phi(\cdot)$ is convex and satisfies certain extra conditions.
In order to ensure the translation invariance of generalized Weyl almost periodic functions with variable exponent, in Definition \ref{w-orliczti}-Definition \ref{w-orlicztistr} we introduce the classes of
(equi-)Weyl-$[p,\phi,F]$-almost periodic functions and (equi-)Weyl-$[p,\phi,F]_{i}$-almost periodic functions, where $i=1,2.$
Several useful comments about these spaces have been provided in Remark \ref{zajev}. In Example \ref{wert}-Example \ref{wert1}, we focus our attention on the following special case: $p(x)\equiv p \in [1,\infty),$ $\phi(x)=x$ and $F(l,t)=l^{(-1)/p\sigma},$ $\sigma \in {\mathbb R},$
which is the most important for investigation of generalized almost periodicity that intermediates Stepanov and Weyl concept. In Section \ref{weylerg}, we introduce and analyze various types of Weyl ergodic components with variable exponent
and asymptotically Weyl almost periodic functions with variable exponent. As mentioned in the abstract, the introduced classes of generalized (asymptotically) Weyl almost periodic functions are new even in the case that the function $p(x)$ has  a constant value $p\geq 1$ and $\phi(x)\neq x$ or $F(l,t)\neq l^{(-1)/p(t)}.$  From the application point of view, the main results of paper are given in Section \ref{cp-inv}, where we examine the invariance of generalized Weyl almost periodicity with variable exponent under the action of convolution products and the convolution invariance of Weyl almost periodic functions with variable exponent. In order to do that, we shall basically follow the method proposed in the proof of Theorem \ref{jensen}.
Section 4 contains two subsections; in Subsection \ref{rates}, we consider the case in which the exponent $p(x)\equiv p\in [1,\infty)$ is constant and solution operator family $(R(t))_{t>0}\subseteq L(X,Y)$ has a certain growth order around the points zero, plus infinity and, in Subsection \ref{nekiprimeri}, we provide some illustrative applications in the qualitative analysis of solutions to abstract degenerate fractional differential equations with Weyl-Liouville or Caputo derivatives.
The paper is not exhaustively complete and we feel it is our duty to say that
the classes of Weyl quasi-asymptotically almost periodic functions, Besicovitch quasi-asymptotically almost periodic functions with constant (variable) exponent and corresponding spaces of generalized almost automorphic functions will be considered 
somewhere else (cf. also the research monographs \cite{diagana} by T. Diagana and \cite{gaston} by G. M. N'Gu\' er\' ekata). Two-parameter asymptotically Weyl almost periodic functions with variable exponents and related composition principles will be considered 
somewhere else, as well (cf. the paper \cite{mellah} by F. Bedouhene, Y. Ibaouene, O. Mellah, P. Raynaud de Fitte and the research monograph \cite{nova-mono} by the author for further information about the subject).

We use the standard notation throughout the paper.
By $(X,\| \cdot \|)$ and $(Y, \|\cdot\|_Y)$ we denote two complex Banach spaces. Further on, by
$L(X,Y)$ we denote the Banach space consisting of all bounded linear operators from $X$ into
$Y;$ $L(X)\equiv L(X,X).$ 

For given constants $s\in {\mathbb R}$ and $\theta \in (0,\pi]$, we set $\lfloor s \rfloor :=\sup \{
l\in {\mathbb Z} : s\geq l \}$, $\lceil s \rceil:=\inf \{
l\in {\mathbb Z} : s\leq l \}$ and $\Sigma_{\theta}:=\{ z\in {\mathbb C} \setminus \{0\} :
|\arg (z)|<\theta \}.$ 
The Euler Gamma function is denoted by
$\Gamma(\cdot)$. We also set $g_{\zeta}(t):=t^{\zeta-1}/\Gamma(\zeta),$ $\zeta>0.$
The finite convolution operator $\ast$ is defined by $f\ast g(t):=\int_{0}^{t}f(t-s)g(s)\,
ds;$ since there is no risk for confusion, the infinite convolution product 
$t\mapsto \int_{-\infty}^{+\infty}f(t-s)g(s)\,
ds,$ $t\in {\mathbb R}$ will be denoted by the same symbol.
For any function $g : {\mathbb R} \rightarrow X ,$ we define the function $\check{g} : {\mathbb R} \rightarrow X$ by $\check{g}(t):=g(-t),$ $t\in {\mathbb R}.$ If $f : I\rightarrow X$, where $I=[0,\infty)$ or $I={\mathbb R},$ then we define the Bochner transform 
$
\hat{f} : I \rightarrow L^{p}([0,1] :X)
$ of function $f(\cdot)$ by
$
\hat{f}(t)(s):=f(t+s),$ $t\in I,$ $ s\in [0,1].
$

Suppose that $f: [a,b]\rightarrow  \mathbb{R}$ is a non-negative Lebesgue-integrable function, where $a,\ b \in {\mathbb R},$ $a<b,$ and $\phi :[0,\infty) \rightarrow {\mathbb R}$ is a convex function. Let us recall that the Jensen integral inequality states that 
$$
{\displaystyle \phi \left({\frac {1}{b-a}}\int _{a}^{b}f(x)\,dx\right)\leq {\frac {1}{b-a}}\int _{a}^{b}\phi (f(x))\,dx.}
$$
Using this integral inequality, we can simply prove that, for every two sequences $(a_{k})$ and $(x_{k})$ of non-negative real numbers such that $\sum_{k=0}^{\infty}a_{k}=1,$ we have
\begin{align}\label{infinitever}
\phi \Biggl( \sum_{k=0}^{\infty}a_{k}x_{k} \Biggr) \leq \sum_{k=0}^{\infty}a_{k}\phi \bigl(x_{k}\bigr).
\end{align}
If $\phi : [0,\infty) \rightarrow {\mathbb R}$ is a concave function, then the above inequalities reverse.

Suppose that $X$ and $Y$ are two Banach spaces.
A multivalued map (multimap) ${\mathcal A} : X \rightarrow P(Y)$ is said to be a multivalued
linear operator, MLO for short, iff the following holds:
\begin{itemize}
\item[(i)] $D({\mathcal A}) := \{x \in X : {\mathcal A}x \neq \emptyset\}$ is a linear subspace of $X$;
\item[(ii)] ${\mathcal A}x +{\mathcal A}y \subseteq {\mathcal A}(x + y),$ $x,\ y \in D({\mathcal A})$
and $\lambda {\mathcal A}x \subseteq {\mathcal A}(\lambda x),$ $\lambda \in {\mathbb C},$ $x \in D({\mathcal A}).$
\end{itemize}
In the case that $X=Y,$ then we say that ${\mathcal A}$ is an MLO in $X.$ For more details about multivalued
linear operators and degenerate resolvent operator families, the reader may consult \cite{faviniyagi}, \cite{FKP} and references cited therein.

By $C_{0}([0,\infty) : X)$ we denote the space consisting of all continuous functions $f : [0,\infty) \rightarrow X$ such that $\lim_{t\rightarrow +\infty}f(t)=0.$ Equipped with the sup-norm, 
$C_{0}([0,\infty) : X)$
becomes a Banach space. By $c_{0}$ we denote the Banach space of all numerical sequences tending to zero, equipped with the sup-norm.

\subsection{Lebesgue spaces with variable exponents
$L^{p(x)}$}\label{karambita}

The monograph \cite{variable} by L. Diening, P. Harjulehto, P. H\"ast\"uso and M. Ruzicka
is of invaluable importance in the study of Lebesgue spaces with variable exponents.

Let $\emptyset \neq \Omega \subseteq {\mathbb R}$ be a nonempty subset and let 
$M(\Omega  : X)$ stand for the collection of all measurable functions $f: \Omega \rightarrow X;$ $M(\Omega):=M(\Omega : {\mathbb R}).$ Furthermore, ${\mathcal P}(\Omega)$ denotes the vector space of all Lebesgue measurable functions $p : \Omega \rightarrow [1,\infty].$
For any $p\in {\mathcal P}(\Omega)$ and $f\in M(\Omega : X),$ set
$$
\varphi_{p(x)}(t):=\left\{
\begin{array}{l}
t^{p(x)},\quad t\geq 0,\ \ 1\leq p(x)<\infty,\\ \\
0,\quad 0\leq t\leq 1,\ \ p(x)=\infty,\\ \\
\infty,\quad t>1,\ \ p(x)=\infty 
\end{array}
\right.
$$
and
\begin{align}\label{modular}
\rho(f):=\int_{\Omega}\varphi_{p(x)}(\|f(x)\|)\, dx .
\end{align}
We define the Lebesgue space 
$L^{p(x)}(\Omega : X)$ with variable exponent
by
$$
L^{p(x)}(\Omega : X):=\Bigl\{f\in M(\Omega : X): \lim_{\lambda \rightarrow 0+}\rho(\lambda f)=0\Bigr\}.
$$
Equivalently,
\begin{align*}
L^{p(x)}(\Omega : X)=\Bigl\{f\in M(\Omega : X):  \mbox{ there exists }\lambda>0\mbox{ such that }\rho(\lambda f)<\infty\Bigr\};
\end{align*}
see, e.g., \cite[p. 73]{variable}.
For every $u\in L^{p(x)}(\Omega : X),$ we introduce the Luxemburg norm of $u(\cdot)$ in the following manner:
$$
\|u\|_{p(x)}:=\|u\|_{L^{p(x)}(\Omega :X)}:=\inf\Bigl\{ \lambda>0 : \rho(f/\lambda)    \leq 1\Bigr\}.
$$
Equipped with the above norm, the space $
L^{p(x)}(\Omega : X)$ becomes a Banach space (see, e.g., \cite[Theorem 3.2.7]{variable} for the scalar-valued case), coinciding with the usual Lebesgue space $L^{p}(\Omega : X)$ in the case that $p(x)=p\geq 1$ is a constant function.
For any $p\in M(\Omega),$ we set 
$$
p^{-}:=\text{essinf}_{x\in \Omega}p(x) \ \ \mbox{ and } \ \ p^{+}:=\text{esssup}_{x\in \Omega}p(x).
$$
Define
$$
C_{+}(\Omega ):=\bigl\{ p\in M(\Omega): 1<p^{-}\leq p(x) \leq p^{+} <\infty \mbox{ for a.e. }x\in \Omega \bigr \}
$$
and
$$
D_{+}(\Omega ):=\bigl\{ p\in M(\Omega): 1 \leq p^{-}\leq p(x) \leq p^{+} <\infty \mbox{ for a.e. }x\in \Omega \bigr \}.
$$
For $p\in D_{+}(\Omega),$ the space $
L^{p(x)}(\Omega : X)$ behaves nicely, with almost all fundamental properties of the Lesbesgue space with constant exponent $
L^{p}(\Omega : X)$ being retained; in this case, we know that the function $\rho(\cdot)$ given by \eqref{modular} is modular in the sense of \cite[Definition 2.1.1]{variable}, as well as that
$$
L^{p(x)}(\Omega : X)=\Bigl\{f\in M(\Omega : X):  \mbox{ for all }\lambda>0\mbox{ we have }\rho(\lambda f)<\infty\Bigr\}.
$$
Furthermore, if $p\in 
C_{+}(\Omega ),$ then $
L^{p(x)}(\Omega : X)$ is uniformly convex and thus reflexive (\cite{fan-zhao}). 

We will use the following lemma (see, e.g., \cite[Lemma 3.2.20, (3.2.22); Corollary 3.3.4; p. 77]{variable} for the scalar-valued case):

\begin{lem}\label{aux}
\begin{itemize}
\item[(i)] (The  H\"older inequality) Let $p,\ q,\ r \in {\mathcal P}(\Omega)$ such that
$$
\frac{1}{q(x)}=\frac{1}{p(x)}+\frac{1}{r(x)},\quad x\in \Omega .
$$
Then, for every $u\in L^{p(x)}(\Omega : X)$ and $v\in L^{r(x)}(\Omega),$ we have $uv\in L^{q(x)}(\Omega : X)$
and
\begin{align*}
\|uv\|_{q(x)}\leq 2 \|u\|_{p(x)}\|v\|_{r(x)}.
\end{align*}
\item[(ii)] Let $\Omega $ be of a finite Lebesgue's measure and let $p,\ q \in {\mathcal P}(\Omega)$ such $q\leq p$ a.e. on $\Omega.$ Then
 $L^{p(x)}(\Omega : X)$ is continuously embedded in $L^{q(x)}(\Omega : X).$
\item[(iii)] Let $f\in L^{p(x)}(\Omega : X),$ $g\in M(\Omega : X)$ and $0\leq \|g\| \leq \|f\|$ a.e. on $\Omega .$ Then $g\in L^{p(x)}(\Omega : X)$ and $\|g\|_{p(x)}\leq \|f\|_{p(x)}.$
\end{itemize}
\end{lem}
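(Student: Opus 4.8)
The plan is to reduce all three assertions to their scalar-valued counterparts in the monograph \cite{variable}, exploiting the fact that the modular $\rho(\cdot)$ from \eqref{modular} sees an $X$-valued function $f$ only through the non-negative scalar function $x\mapsto \|f(x)\|$. In particular $\|u\|_{p(x)}=\big\|\,\|u(\cdot)\|\,\big\|_{p(x)}$ for every $u\in L^{p(x)}(\Omega:X)$, so throughout one may replace $u$ by $\|u(\cdot)\|$ and appeal to the real-valued theory, transferring the conclusions back verbatim. The only subtlety in this reduction is that it must preserve measurability and the modular identities exactly, which it does precisely because $\varphi_{p(x)}$ acts solely on the norm.

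I would dispose of (iii) first, as it is purely a monotonicity observation. For each fixed $x$ the map $t\mapsto \varphi_{p(x)}(t)$ is non-decreasing on $[0,\infty)$, so $0\le \|g(x)\|\le \|f(x)\|$ a.e.\ forces $\varphi_{p(x)}(\|g(x)\|/\lambda)\le \varphi_{p(x)}(\|f(x)\|/\lambda)$ for every $\lambda>0$; integrating over $\Omega$ gives $\rho(g/\lambda)\le \rho(f/\lambda)$. Membership $g\in L^{p(x)}(\Omega:X)$ is then immediate from either characterisation of the space, and since every $\lambda$ admissible for $f$ in the Luxemburg infimum (that is, with $\rho(f/\lambda)\le 1$) is automatically admissible for $g$, the infimum defining $\|g\|_{p(x)}$ runs over a set containing that for $\|f\|_{p(x)}$, yielding $\|g\|_{p(x)}\le \|f\|_{p(x)}$.

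For (i) I would apply the scalar variable-exponent H\"older inequality \cite[Lemma 3.2.20]{variable} to the functions $x\mapsto \|u(x)\|$ and $x\mapsto |v(x)|$. The underlying mechanism is the pointwise Young inequality $ab\le \tfrac{a^{p(x)}}{p(x)}+\tfrac{b^{r(x)}}{r(x)}$ together with the normalisation $\tfrac{1}{p(x)}+\tfrac{1}{r(x)}=\tfrac{1}{q(x)}$; the constant $2$ is exactly the loss incurred when converting the resulting modular estimate into a Luxemburg-norm estimate in the variable-exponent setting. Since $\|u(x)v(x)\|=\|u(x)\|\,|v(x)|$, both the measurability of $uv$ and the bound $\|uv\|_{q(x)}\le 2\|u\|_{p(x)}\|v\|_{r(x)}$ carry over to the $X$-valued case without change. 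Finally, (ii) I would obtain as a corollary of (i): since $q\le p$ a.e., define $r\in \mathcal{P}(\Omega)$ by $\tfrac{1}{r(x)}:=\tfrac{1}{q(x)}-\tfrac{1}{p(x)}\ge 0$, noting that $q(x)\ge 1$ gives $\tfrac{1}{r(x)}\le \tfrac{1}{q(x)}\le 1$, so indeed $r(x)\in[1,\infty]$. As $\Omega$ has finite Lebesgue measure, the constant function $\mathbf 1$ lies in $L^{r(x)}(\Omega)$ with $\|\mathbf 1\|_{r(x)}<\infty$, and applying (i) with this $r$ and $v\equiv \mathbf 1$ gives $\|u\|_{q(x)}=\|u\cdot \mathbf 1\|_{q(x)}\le 2\|\mathbf 1\|_{r(x)}\,\|u\|_{p(x)}$, which is the asserted continuous embedding with embedding constant $2\|\mathbf 1\|_{r(x)}$.

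The main obstacle is not conceptual but lies entirely in part (i): one must carefully track the constant and verify that the passage from the modular H\"older estimate to the Luxemburg-norm estimate costs no more than the stated factor $2$. Parts (iii) and (ii) are then bookkeeping — (iii) by monotonicity of $\varphi_{p(x)}$, and (ii) by a single application of (i) against the unit function, which is legitimate exactly because finite measure of $\Omega$ makes $\mathbf 1$ belong to every $L^{r(x)}(\Omega)$.
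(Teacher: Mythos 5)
Your proof is correct and follows essentially the same route as the paper, which gives no argument of its own beyond citing the scalar-valued results \cite[Lemma 3.2.20; Corollary 3.3.4; p.~77]{variable}: since the Luxemburg norm of an $X$-valued function depends only on $x\mapsto\|f(x)\|$, the reduction you describe is exactly what that citation implicitly relies on. Your direct monotonicity argument for (iii) and your derivation of (ii) from (i) via the constant function $\mathbf{1}$ (legitimate because $\Omega$ has finite measure) likewise mirror how these facts are established in \cite{variable} itself, so there is nothing to add.
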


For additional details upon Lebesgue spaces with variable exponents
$L^{p(x)},$ we refer the reader to the following sources: \cite{m-zitane}, \cite{m-zitane-prim}, \cite{fan-zhao} and \cite{doktor}.

Before proceeding further, we need to recall the recently introduced notions of $S^{p(x)}$-boundedness and (asymptotical) Stepanov $p(x)$-almost periodicity:

\begin{defn}\label{daniel-toka} (\cite{AP})
Let $p\in {\mathcal P}([0,1])$ and let $I={\mathbb R}$ or $I=[0,\infty).$ A function $f\in M(I : X)$ is said to be Stepanov $p(x)$-bounded (or $S^{p(x)}$-bounded) iff $f(\cdot +t) \in L^{p(x)}([0,1]: X)$ for all $t\in I,$ and the sup norm of Bochner transform satisfies $\sup_{t\in I} \|f(\cdot +t)\|_{p(x)}<\infty ;$ more precisely, 
$$
\|f\|_{S^{p(x)}}:=\sup_{t\in I}\inf\Biggl\{ \lambda>0 : \int_{0}^{1}\varphi_{p(x)}\Biggl( \frac{\|f(x +t)\|}{\lambda}\Biggr)\, dx \leq 1\Biggr\}<\infty.
$$
The collection of such functions will be denoted by $L_{S}^{p(x)}(I:X).$ 
\end{defn}

From Definition \ref{daniel-toka} it follows that the space $L_{S}^{p(x)}(I:X)$ is translation invariant in the sense that, for every $f\in L_{S}^{p(x)}(I:X)$ and $\tau \in I,$ we have $f(\cdot+\tau) \in L_{S}^{p(x)}(I:X).$ This is not the case with the notion introduced by T. Diagana and M. Zitane in \cite{m-zitane}-\cite{m-zitane-prim}.

\begin{defn}\label{sasasa} (\cite{AP})
\begin{itemize}
\item[(i)]
Let $p\in {\mathcal P}([0,1])$ and let $I={\mathbb R}$ or $I=[0,\infty).$ A function $f\in L_{S}^{p(x)}(I:X)$ is said to be Stepanov $p(x)$-almost periodic (or Stepanov $p(x)$-a.p.) iff the function $\hat{f} : I \rightarrow L^{p(x)}([0,1]: X)$ is almost periodic. 
The collection of such functions will be denoted by $APS^{p(x)}(I : X)$.
\item[(ii)] Let $p\in {\mathcal P}([0,1])$ and let $I=[0,\infty).$ A function $f\in L_{S}^{p(x)}(I:X)$ is said to be asymptotically Stepanov $p(x)$-almost periodic (or asymptotically Stepanov $p(x)$-a.p.) iff the function $\hat{f} : I \rightarrow L^{p(x)}([0,1]: X)$ is  asymptotically almost periodic. The collection of such functions will be denoted by $AAPS^{p(x)}(I : X)$. 
\end{itemize}
\end{defn}

We will extend \cite[Definition 3.10]{m-zitane} in the following way (in this paper, the authors have considered the case $I={\mathbb R}$ and $p\in C_{+}({\mathbb R});$ we can extend the notion introduced in \cite[Definition 3.11]{m-zitane} in the same way):

\begin{defn}\label{defrinoli}
Let $I={\mathbb R}$ or $I=[0,\infty),$ and let $p\in {\mathcal P}(I).$ Then it is said that a measurable function $ f : I \rightarrow X$ belongs to the space $BS^{p(x)}(I : X)$ iff
$$
\bigl \|f \bigr \|_{{\bf S}^{p(x)}}:=\sup_{t\in I} \inf \Biggl\{ \lambda>0 : \int^{t+1}_{t}\varphi_{p(x)}(\|f(x)\|/\lambda)\, dx \leq 1 \Biggr\}<\infty.
$$
\end{defn}

Before starting our work, the author would like to express his sincere gratitude to Professor F. Boulahia for many stimulating discussions and useful suggestions during the research.

\section{$(p,\phi,F)$-Classes and $[p,\phi,F]$-classes of Weyl almost periodic functions}\label{fi-prcko}

Throughout this paper, we assume the following general conditions:
\begin{itemize}
\item[(A):] $I={\mathbb R}$ or $I=[0,\infty),$ $\phi :[0,\infty) \rightarrow [0,\infty) ,$ $ p\in {\mathcal P}(I)$ and 
$F :(0,\infty) \times I \rightarrow (0,\infty).$
\item[(B):] The same as (A) with the assumption $ p\in {\mathcal P}(I)$ replaced by $ p\in {\mathcal P}([0,1])$ therein.
\end{itemize}

We introduce the notions of an (equi-)Weyl-$(p,\phi,F)$-almost periodic function and an (equi-)Weyl-$(p,\phi,F )_{i}$-almost periodic function, where $i=1,2,$ as follows (see \cite{nova-mono} for the case that $p(x)\equiv p\in [1,\infty),$ $\phi(x)=x$ and $F(l,t)=l^{(-1)/p}$, when we deal with the usually considered (equi-)Weyl-$p$-almost periodic functions, as well as to \cite[Remark 4.13]{AP} for the case  that $\phi(x)=x$ and and $F(l,t)=l^{(-1)/p(t)}$): 

\begin{defn}\label{w-orlicz}
Suppose that condition \mbox{(A)} holds, $f : I \rightarrow X$ and $\phi(\|f(\cdot +\tau)-f(\cdot)\|) \in L^{p(x)}( K)$ for any $\tau \in I$ and any compact subset $K$ of $I.$ 
\begin{itemize}
\item[(i)] It is said that the function $f(\cdot)$ is equi-Weyl-$(p,\phi,F)$-almost periodic, $f\in e-W_{ap}^{(p,\phi,F)}(I:X)$ for short, iff for each $\epsilon>0$ we can find two real numbers $l>0$ and $L>0$ such that any interval $I'\subseteq I$ of length $L$ contains a point $\tau \in  I'$ such that
\begin{align}\label{profice}
e-\|f\|_{(p,\phi,F,\tau)}:=\sup_{t\in I}\Biggl[ F(l,t) \Bigl[\phi \bigl( \bigl\| f(\cdot+\tau) -f(\cdot)\bigr\|\bigr)_{L^{p(\cdot)}[t,t+l]}\Bigr]\Biggr] \leq \epsilon .
\end{align}
\item[(ii)] It is said that the function $f(\cdot)$ is Weyl-$(p,\phi,F)$-almost periodic, $f\in W_{ap}^{(p,\phi,F)}(I: X)$ for short, iff for each $\epsilon>0$ we can find a real number $L>0$ such that any interval $I'\subseteq I$ of length $L$ contains a point $\tau \in  I'$ such that
\begin{align}\label{profice1}
\|f\|_{(p,\phi,F,\tau)}:=\limsup_{l\rightarrow \infty} \sup_{t\in I}\Biggl[F(l,t) \Bigl[\phi \bigl(\bigl \| f(\cdot+\tau) -f(\cdot)\bigr\|\bigr)_{L^{p(\cdot)}[t,t+l]} \Bigr]\Biggr]  \leq \epsilon .
\end{align}
\end{itemize}
\end{defn}

\begin{defn}\label{w-orliczaq}
Suppose that condition \mbox{(A)} holds, $f : I \rightarrow X$ and $\|f(\cdot +\tau)-f(\cdot)\| \in L^{p(x)}( K)$ for any $\tau \in I$ and any compact subset $K$ of $I.$ 
\begin{itemize}
\item[(i)] It is said that the function $f(\cdot)$ is equi-Weyl-$( p,\phi,F )_{1}$-almost periodic, $f\in e-W_{ap}^{( p,\phi,F)_{1}}(I:X)$ for short, iff for each $\epsilon>0$ we can find two real numbers $l>0$ and $L>0$ such that any interval $I'\subseteq I$ of length $L$ contains a point $\tau \in  I'$ such that
\begin{align*} 
e-\|f\|_{(p,\phi,F,\tau)_{1}}:=\sup_{t\in I}\Biggl[ F(l,t) \phi \Bigl[\bigl( \bigl\| f(\cdot+\tau) -f(\cdot)\bigr\|\bigr)_{L^{p(\cdot)}[t,t+l]}\Bigr]\Biggr] \leq \epsilon .
\end{align*}
\item[(ii)] It is said that the function $f(\cdot)$ is Weyl-$(p,\phi,F)_{1}$-almost periodic, $f\in W_{ap}^{(p,\phi,F)_{1}}(I: X)$ for short, iff for each $\epsilon>0$ we can find a real number $L>0$ such that any interval $I'\subseteq I$ of length $L$ contains a point $\tau \in  I'$ such that
\begin{align*}
\|f\|_{(p,\phi,F,\tau)_{1}}:=\limsup_{l\rightarrow \infty} \sup_{t\in I}\Biggl[F(l,t)  \phi \Bigl[\bigl(\bigl \| f(\cdot+\tau) -f(\cdot)\bigr\|\bigr)_{L^{p(\cdot)}[t,t+l]} \Bigr]\Biggr]  \leq \epsilon .
\end{align*}
\end{itemize}
\end{defn}

\begin{defn}\label{w-orliczaq2}
Suppose that condition \mbox{(A)} holds, $f : I \rightarrow X$ and $\|f(\cdot +\tau)-f(\cdot)\| \in L^{p(x)}( K)$ for any $\tau \in I$ and any compact subset $K$ of $I.$ 
\begin{itemize}
\item[(i)] It is said that the function $f(\cdot)$ is equi-Weyl-$( p,\phi,F )_{2}$-almost periodic, $f\in e-W_{ap}^{( p,\phi,F)_{2}}(I:X)$ for short, iff for each $\epsilon>0$ we can find two real numbers $l>0$ and $L>0$ such that any interval $I'\subseteq I$ of length $L$ contains a point $\tau \in  I'$ such that
\begin{align*}
e-\|f\|_{(p,\phi,F,\tau)_{2}}:=\sup_{t\in I} \phi \Biggl[ F(l,t) \Bigl[\bigl( \bigl\| f(\cdot+\tau) -f(\cdot)\bigr\|\bigr)_{L^{p(\cdot)}[t,t+l]}\Bigr]\Biggr] \leq \epsilon .
\end{align*}
\item[(ii)] It is said that the function $f(\cdot)$ is Weyl-$(p,\phi,F)_{2}$-almost periodic, $f\in W_{ap}^{(p,\phi,F)_{2}}(I: X)$ for short, iff for each $\epsilon>0$ we can find a real number $L>0$ such that any interval $I'\subseteq I$ of length $L$ contains a point $\tau \in  I'$ such that
\begin{align*}
\|f\|_{(p,\phi,F,\tau)_{2}}:=\limsup_{l\rightarrow \infty}\sup_{t\in I} \phi \Biggl[F(l,t)  \Bigl[\bigl(\bigl \| f(\cdot+\tau) -f(\cdot)\bigr\|\bigr)_{L^{p(\cdot)}[t,t+l]} \Bigr]\Biggr]  \leq \epsilon .
\end{align*}
\end{itemize}
\end{defn}

If $i=1,2$ and $F(l,t)=\psi(l)^{(-1)/p(t)}$ for some function $\psi : (0,\infty) \rightarrow (0,\infty)$ and all $t\in I,$ then we also say that the function $f(\cdot)$ is (equi-)Weyl-$(p,\phi,\psi)$-almost periodic, resp.  (equi-)Weyl-$(p,\phi,\psi )_{i}$-almost periodic, when the corresponding class of functions is also denoted by $(e-)W_{ap}^{(p,\phi,\psi)}(I:X),$ resp. 
$(e-)W_{ap}^{( p,\phi,\psi )_{i}} (I:X).$ There is no need to say that the above classes coincide in the case that $\phi(x)\equiv x.$

\begin{example}\label{primer-triv}
\begin{itemize}
\item[(i)] If $\phi(0)=0,$ then any continuous periodic function $f : I \rightarrow X$ belongs to any of the above introduced function spaces. If $\phi(0)>0,$ then 
a constant function cannot belong to any of the function spaces introduced in Definition \ref{w-orliczaq2}, while the function spaces introduced in Definition \ref{w-orlicz}-Definition \ref{w-orliczaq} can contain constant functions (see also Remark \ref{zajev1}(iii)). 
\item[(ii)] If $\phi(x)=x$ and $p(x)\equiv p \in [1,\infty),$ then any Stepanov $p$-bounded function $f : I \rightarrow X$ belongs to any of the above introduced function spaces with $F(l,t)\equiv l^{-\sigma},$ where $\sigma >1/p;$ in particular, if $f(\cdot)$ is Stepanov $p(x)$-bounded and $p\in D_{+}(I),$ then $f(\cdot)$ belongs to any of the above introduced function spaces with $F(l,t)\equiv l^{-\sigma},$ where $\sigma >1/p^{+}.$
This simply follows from the inequality
$$
\Biggl(\int^{t+l}_{t}\|f(s+\tau)-f(s)\|^{p}\, ds\Biggr)^{1/p}\leq \sum_{k=0}^{\lfloor l \rfloor}\Biggl(\int^{t+k+1}_{t+k}\|f(s+\tau)-f(s)\|^{p}\, ds\Biggr)^{1/p},
$$
which is valid for any $t,\ \tau \in I,$ $l>0,$ 
and a simple argumentation. Suppose now that $I={\mathbb R}$ or $I=[0,\infty),$ $p\in {\mathcal P}(I)$ and $f\in BS^{p(x)}(I : X).$ A similar line of reasoning shows that
$f(\cdot)$ belongs to any of the above introduced function spaces provided that
\begin{itemize}
\item[(a)] $p\in D_{+}(I)$ and
$F(l,t)\equiv l^{-\sigma},$ where $\sigma >1/p^{+},$ or
\item[(b)] $F(l,t)\equiv l^{-\sigma},$ where $\sigma >1,$ in general case. For this, it is only worth noting that we have $\varphi_{p(x)}(t/l^{\sigma})\leq (1/l^{\sigma})\varphi_{p(x)}(t)$ for any $t\geq 0$ and $l\geq 1.$
\end{itemize}
\item[(iii)] If $X$ does not contain an isomorphic copy of the sequence space
$c_{0}$, $\phi(x)=x$ and $F(l,t)\equiv F(t),$ where $\lim_{t\rightarrow +\infty}F(t)=+\infty,$ then there is no trigonometric polynomial $f(\cdot)$ and function $p\in {\mathcal P}({\mathbb R})$ such that $f\in e-W_{ap}^{(p,x,F)}({\mathbb R}:X).$ 
If we
suppose the contrary, then using the fact that the space $L^{p(x)}[t,t+l]$ is continuously embedded into the space $L^{1}[t,t+l]$ with the constant of embeddings less than or equal to $2(1+l)$ (see, e.g., \cite[Corollary 3.3.4]{variable}), where $t\in {\mathbb R}$ and $l>0,$ we get that
for each $\epsilon>0$ we can find two real numbers $l>0$ and $L>0$ such that any interval $I'\subseteq {\mathbb R}$ of length $L$ contains a point $\tau \in  I'$ such that
\begin{align}\label{koeradenija}
\sup_{t\in {\mathbb R}}\Biggl[ F(t) \bigl\| f(\cdot+\tau) -f(\cdot)\bigr\|_{L^{1}[t,t+l]}\Biggr] \leq 2\epsilon (1+l) .
\end{align}
Let such numbers $l>0$ and $\tau \in {\mathbb R}$ be fixed.
By \eqref{koeradenija}, we get that the mapping $t\mapsto f_{1}(t)\equiv \int^{t+l}_{t}\|f(s+\tau)-f(s)\|\, ds,$ $t\geq 0$ belongs to the space $C_{0}([0,\infty) : {\mathbb C}).$
On the other hand, the mapping $s\mapsto \|f(s+\tau)-f(s)\|,$ $s\in {\mathbb R}$ is almost periodic and satisfies that $\int^{t}_{0} \|f(s+\tau)-f(s)\|\, ds <\infty,$ so that the mapping $t\mapsto f_{2}(t)\equiv \int^{t}_{0} \|f(s+\tau)-f(s)\|\, ds,$ $t\in {\mathbb R}$ is almost periodic
by \cite[Theorem 2.1.1(vi)]{nova-mono}.
By the translation invariance, the same holds for the mapping $f_{1}(\cdot)=f_{2}(\cdot +\tau)-f_{2}(\cdot).$ Since $f_{1}\in C_{0}([0,\infty) : {\mathbb C}),$ we get that $f_{1}\equiv 0,$ so that
$\|f(s+\tau)-f(s)\|=0$ for all $s\geq 0$ and $f(\cdot)$ is periodic, which is a contradiction. Based on the conclusion obtained in this part, we will not examine the question whether, for a given number $\epsilon>0$ and an equi-Weyl-$( p,\phi,F )$-almost periodic function or an equi-Weyl-$( p,\phi,F )_{i}$-almost periodic
function ($i=1,2$), we can find a trigonometric polynomial $P(\cdot)$ such that $\|P-f\|_{(p,\phi,F)}<\epsilon$ or $\|P-f\|_{(p,\phi,F)_{i}}<\epsilon$  ($i=1,2$), where
$$
e-\|f\|_{(p,\phi,F)}:=\sup_{t\in I}\Biggl[ F(l,t) \Bigl[\phi \bigl( \bigl\| f(\cdot)\bigr\|\bigr)_{L^{p(\cdot)}[t,t+l]}\Bigr]\Biggr],
$$
$$
e-\|f\|_{(p,\phi,F)_{1}}:=\sup_{t\in I}\Biggl[ F(l,t) \phi \Bigl[\bigl( \bigl\| f(\cdot)\bigr\|\bigr)_{L^{p(\cdot)}[t,t+l]}\Bigr]\Biggr]
$$
and
$$
e-\|f\|_{(p,\phi,F)_{2}}:=\sup_{t\in I} \phi \Biggl[ F(l,t) \Bigl[\bigl( \bigl\| f(\cdot)\bigr\|\bigr)_{L^{p(\cdot)}[t,t+l]}\Bigr]\Biggr] .
$$ 
For the usually considered class of equi-Weyl-$p$-almost periodic functions, where $1\leq p<\infty,$ the answer to the above question is affirmative (see, e.g., \cite[Theorem 2.3.2]{nova-mono}). Observe also that the sub-additivity of function $\phi(\cdot)$ implies the
sub-additivity of 
functions $
e-\|\cdot\|_{(p,\phi,F)}$ and $
e-\|\cdot\|_{(p,\phi,F)_{i}},$ where $i=1,2;$ since the limit superior is also a sub-additive operation, the same holds for the functions $
\|\cdot\|_{(p,\phi,F)}$ and $
\|\cdot\|_{(p,\phi,F)_{i}},$ where $i=1,2,$ defined as above (cf. the second parts of Definition \ref{w-orlicz}-Definition \ref{w-orliczaq2}, as well as Definition \ref{w-orliczti}-Definition \ref{w-orlicztistr} below).
\end{itemize}
\end{example}

In the case that the function $\phi(\cdot)$ is convex and $p(x)\equiv 1$, we have the following result:

\begin{prop}\label{stevate}
Suppose that $p(x)\equiv 1,$ $f : I \rightarrow X,$ $\|f(\cdot +\tau)-f(\cdot)\| \in L^{p(x)}( K)$ for any $\tau \in I$ and any compact subset $K$ of $I,$ as well as condition
\begin{itemize}
\item[(C):] $\phi(\cdot)$ is convex and there exists a function $\varphi :[0,\infty) \rightarrow [0,\infty)$ such that $\phi(lx)\leq \varphi(l) \phi(x)$ for all $l>0$ and $x\geq 0$  
\end{itemize}
holds. Set $F_{1}(l,t):=F(l,t)l[\varphi(l)]^{-1},$ $l>0$, $t\in I$ and $F_{2}(l,t):=l^{-1}\varphi ( F(l,t)l),$ $l>0$, $t\in I$.
Then we have:
\begin{itemize}
\item[(i)] $f\in (e-)W_{ap}^{(1,\phi,F)} \Rightarrow f\in (e-)W_{ap}^{(1,\phi,F_{1})_{1}}.$ 
\item[(ii)] $f\in (e-)W_{ap}^{(1,\phi, F_{2})} \Rightarrow f\in (e-)W_{ap}^{(1,\phi,F)_{2}}.$ 
\end{itemize}
\end{prop}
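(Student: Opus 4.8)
The plan is to derive both implications from a single pointwise estimate in the variables $t$ and $l$, obtained by composing the Jensen integral inequality with condition (C); the functions $F_{1}$ and $F_{2}$ are defined precisely so that the constants produced along the way cancel. Throughout I write $g(s):=\|f(s+\tau)-f(s)\|$ and use that, since $p(x)\equiv 1$, the Luxemburg norm is the ordinary $L^{1}$-norm, so that $\bigl(g\bigr)_{L^{p(\cdot)}[t,t+l]}=\int_{t}^{t+l}g(s)\,ds$ and $\bigl[\phi(g)\bigr]_{L^{p(\cdot)}[t,t+l]}=\int_{t}^{t+l}\phi(g(s))\,ds$. Since $g\geq 0$ and $g\in L^{1}(K)$ on compacts by hypothesis, and since membership in the source class forces the relevant integrals $\int_{t}^{t+l}\phi(g)\,ds$ to be finite, the Jensen inequality applies on each interval $[t,t+l]$ (with $b-a=l$).

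For (i), I would fix $\tau$, $l$ and $t$ and start from the target quantity $\phi\bigl(\int_{t}^{t+l}g\,ds\bigr)$. Writing its argument as $l\cdot\frac{1}{l}\int_{t}^{t+l}g\,ds$ and invoking condition (C) with factor $l$ gives $\phi\bigl(\int_{t}^{t+l}g\,ds\bigr)\leq\varphi(l)\,\phi\bigl(\frac{1}{l}\int_{t}^{t+l}g\,ds\bigr)$; Jensen then bounds the right-hand side by $\varphi(l)\cdot\frac{1}{l}\int_{t}^{t+l}\phi(g)\,ds$. Multiplying through by $F_{1}(l,t)=F(l,t)\,l\,[\varphi(l)]^{-1}$, the factors $\varphi(l)$ and $l$ cancel, leaving $F_{1}(l,t)\,\phi\bigl(\int_{t}^{t+l}g\,ds\bigr)\leq F(l,t)\int_{t}^{t+l}\phi(g)\,ds$. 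Taking $\sup_{t\in I}$ yields $e-\|f\|_{(1,\phi,F_{1},\tau)_{1}}\leq e-\|f\|_{(1,\phi,F,\tau)}\leq\epsilon$ with the same $l$, $L$ and $\tau$ supplied by membership in $e-W_{ap}^{(1,\phi,F)}$, which settles the equi-case. In the non-equi case the same pointwise inequality holds for every $l$, so taking $\sup_{t\in I}$ and then $\limsup_{l\to\infty}$ of both sides preserves it and gives $f\in W_{ap}^{(1,\phi,F_{1})_{1}}$.

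For (ii), the computation is symmetric, with $F(l,t)l$ playing the role that $l$ played above. I would fix $\tau$, $l$, $t$ and write $\phi\bigl(F(l,t)\int_{t}^{t+l}g\,ds\bigr)=\phi\bigl(F(l,t)l\cdot\frac{1}{l}\int_{t}^{t+l}g\,ds\bigr)$; applying condition (C) with factor $F(l,t)l$ and then Jensen gives $\phi\bigl(F(l,t)\int_{t}^{t+l}g\,ds\bigr)\leq\varphi\bigl(F(l,t)l\bigr)\cdot\frac{1}{l}\int_{t}^{t+l}\phi(g)\,ds=F_{2}(l,t)\int_{t}^{t+l}\phi(g)\,ds$, where $F_{2}(l,t)=l^{-1}\varphi(F(l,t)l)$. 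Taking $\sup_{t\in I}$ gives $e-\|f\|_{(1,\phi,F,\tau)_{2}}\leq e-\|f\|_{(1,\phi,F_{2},\tau)}\leq\epsilon$ in the equi-case, and passing to $\limsup_{l\to\infty}\sup_{t\in I}$ handles the non-equi case exactly as in (i).

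I do not expect a serious analytic obstacle: once the correct chain is written down, both statements reduce to the cancellation built into the definitions of $F_{1}$ and $F_{2}$. The only points requiring care are the order in which (C) and Jensen are composed — condition (C) must be used first to extract the scaling factor out of $\phi$, and Jensen second to pass the averaging into the integral — and the bookkeeping confirming that the extraneous constants $\varphi(l)/l$, respectively $\varphi(F(l,t)l)/l$, are cancelled exactly by the chosen prefactor. A routine but necessary preliminary is to record that all integrals appearing are finite, which follows from the integrability hypothesis on $g$ together with membership in the source class.
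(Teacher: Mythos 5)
Your proof is correct and follows essentially the same route as the paper's: in both parts you factor the argument of $\phi$ as a scaling constant ($l$, respectively $F(l,t)l$) times the average $l^{-1}\int_{t}^{t+l}g$, apply condition (C) to extract $\varphi$ of that constant, and then apply the Jensen inequality, after which the definitions of $F_{1}$ and $F_{2}$ produce exactly the cancellation you describe. Your explicit handling of the $\limsup_{l\to\infty}$ in the non-equi case and of the finiteness of the integrals only makes explicit what the paper leaves implicit.
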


\begin{proof}
To prove (i), suppose that $f\in (e-)W_{ap}^{(1,\phi,F)}.$ Then the assumption (C) and the Jensen integral inequality together imply
\begin{align*}
\phi \Bigl(& \|f(\cdot +\tau)-f(\cdot)\|_{L^{1}[t,t+l]}  \Bigr)
= \phi \Bigl( l \cdot l^{-1} \|f(\cdot +\tau)-f(\cdot)\|_{L^{1}[t,t+l]}  \Bigr)
\\ \leq & \varphi(l) \phi \Bigl( l^{-1} \|f(\cdot +\tau)-f(\cdot)\|_{L^{1}[t,t+l]} \Bigr)
 \leq \varphi(l) l^{-1}\Bigl[ \phi \bigl( \|f(\cdot +\tau)-f(\cdot)\|\bigr)\Bigr]_{L^{1}[t,t+l]}.
\end{align*}
This simply yields $f\in (e-)W_{ap}^{(1,\phi,F_{1})_{1}}.$ To prove (ii), suppose that $f\in (e-)W_{ap}^{(1,\phi, F_{2})}.$ Then the assumption (C) and the Jensen integral inequality together imply
\begin{align*}
\phi\Bigl( & F(l,t)\|f(\cdot+\tau)-f(\cdot)\|_{L^{1}[t,t+l]} \Bigr)=\phi\Bigl( F(l,t)l\cdot l^{-1} \|f(\cdot+\tau)-f(\cdot)\|_{L^{1}[t,t+l]} \Bigr)
\\ \leq & \varphi(F(t,l)l)l^{-1}\Bigl[ \phi \bigl( \|f(\cdot+\tau)-f(\cdot)\|\bigr) \Bigr]_{L^{1}[t,t+l]}.
\end{align*}
This simply yields $f\in (e-)W_{ap}^{(1,\phi,F)_{2}}.$ 
\end{proof}

Before we go any further, let us recall that any equi-Weyl-$p$-almost periodic function needs to be Weyl $p$-almost periodic, while the converse statement does not hold in general. On the other hand, it is not true that an equi-Weyl-$(p,\phi,\psi)$-almost periodic function, resp. equi-Weyl-$(p,\phi,\psi)_{i}$-almost periodic function, is Weyl-$(p,\phi,\psi)$-almost periodic, resp. Weyl-$(p,\phi,\psi)_{i}$-almost periodic; moreover, an unrestrictive choice of function $\psi (\cdot)$
allows us to work with a substantially large class of quasi-almost periodic functions: As it can be simply approved, any Stepanov $p$-almost periodic function $f(\cdot)$ 
is equi-Weyl-$(p,\phi,\psi)$-almost periodic with $p(x)\equiv p \in [1,\infty),$ $\psi(l)\equiv 1,$ $\phi(x)=x;$ on the other hand, any continuous Stepanov $p$-almost periodic function $f(\cdot)$ which is not periodic cannot be Weyl-$(p,x,1)$-almost periodic, for example. Let us explain the last fact in more detail. 
If we suppose the contrary, then for each $\epsilon>0$ we can find a real number $L>0$ such that any interval $I'\subseteq I$ of length $L$ contains a point $\tau \in  I'$ such that
\eqref{profice1} holds with $p(x)\equiv p \in [1,\infty),$ $\psi(l)\equiv 1$ and $\phi(x)=\varphi(x)=x.$
This simply implies that for each $\epsilon>0$ we can find a strictly increasing sequence $(l_{n})$ of positive real numbers tending to infinity such that for each $t\in I$ and $n\in {\mathbb N}$ we have $\int_{t
+l_{n}}^{t}\| f(x+\tau)-f(x)\|^{p}\, dx \leq \epsilon$ for each $\epsilon>0;$ hence, $\int_{I}\| f(x+\tau)-f(x)\|^{p}\, dx \leq \epsilon$ and therefore 
$\int_{I}\| f(x+\tau)-f(x)\|^{p}\, dx=0.$ This yields $f(x+\tau)=f(x),$ $x\in I,$ which is a contradiction with our preassumption. 

\begin{rem}\label{zajev1}
\begin{itemize}
\item[(i)] It is clear that, if $f(\cdot)$ is an (equi-)Weyl-$(p,\phi,F)$-almost periodic function, resp. (equi-)Weyl-$(p,\phi,F)_{1}$-almost periodic function, and 
$F(l,t)\geq F_{1}(l,t)$ for every $l>0$ and $t\in I,$ then $f(\cdot)$ is (equi-)Weyl-$(p,\phi,F_{1})$-almost periodic, resp. (equi-)Weyl-$(p,\phi,F_{1})_{1}$-almost periodic. Furthermore,
if $f(\cdot)$ is an (equi-)Weyl-$(p,\phi,F)_{2}$-almost periodic function, then $f(\cdot)$ is an (equi-)Weyl-$(p,\phi,F_{1})_{2}$-almost periodic function 
provided that $F(l,t)\geq F_{1}(l,t)$ for every $l>0,$ $t\in I$ and $\phi(\cdot)$ is monotonically increasing, or $F(l,t)\leq F_{1}(l,t)$ for every $l>0,$ $t\in I$ and $\phi(\cdot)$ is monotonically decreasing.
\item[(ii)] If $f(\cdot)$ is an (equi-)Weyl-$(p,\phi,F)$-almost periodic function, resp.  (equi-)Weyl-$(p,\phi,F)_{i}$-almost periodic function, $\phi_{1}(\cdot)$ is measurable and $0\leq \phi_{1}\leq \phi,$
then Lemma \ref{aux}(iii) yields that $f(\cdot)$ is (equi-)Weyl-$(p,\phi_{1},F)$-almost periodic, resp. (equi-)Weyl-$(p,\phi_{1},F)_{i}$-almost periodic, where $i=1,2.$
\item[(iii)] Regarding the first parts in the above definitions, it is worth noticing that we do not allow the number $l>0$ to be sufficiently large: in some concrete situations, it is crucial to allow the number $l>0$ to be sufficiently small; we will explain this fact by two illustrative examples. First, let us consider Definition \ref{w-orlicz}(i). Suppose that $p(x)\equiv p\in [1,\infty)$ and there exists an absolute constant $c>0$ such that for each $l>0$ and $\tau \in I$ we have
$$
\sup_{t\in I}\phi \bigl( \bigl\| f(\cdot+\tau) -f(\cdot)\bigr\|\bigr)_{L^{p(x)}[t,t+l]}\leq c.
$$
Then it simply follows that the function $f(\cdot)$ is equi-Weyl-$(p,\phi,\psi)$-almost periodic provided that $\lim_{l\rightarrow 0+}\psi(l)=+\infty.$ Second, suppose that $f\in L^{\infty}(I:X).$ Then $f(\cdot)$ is equi-Weyl-$(p,x,1)$-almost periodic for any
$p\in {\mathcal D}(I),$ which can be simply approved by considering the case of constant coefficient $p(x) \equiv p^{+}$ and the choice $l=l(\epsilon)=\epsilon.$
\end{itemize}
\end{rem}

In order to ensure the translation invariance of Weyl spaces with variable exponent, we need to follow a slightly different approach (\cite{AP}-\cite{toka-marek-prim}):

\begin{defn}\label{w-orliczti}
Suppose that condition \mbox{(B)} holds, $f : I \rightarrow X$ and 
$\phi ( \| f(\cdot  l+t+\tau) -f(t+\cdot l)\|)\in L^{p(x)}([0,1])$
for any $\tau \in I,$ $t\in I$ and $l>0.$
\begin{itemize}
\item[(i)] It is said that the function $f(\cdot)$ is equi-Weyl-$[p,\phi,F]$-almost periodic, $f\in e-W_{ap}^{[p,\phi,F]}(I:X)$ for short, iff for each $\epsilon>0$ we can find two real numbers $l>0$ and $L>0$ such that any interval $I'\subseteq I$ of length $L$ contains a point $\tau \in  I'$ such that
\begin{align*}
e-\|f\|_{[p,\phi,F,\tau]}:=\sup_{t\in I}\Biggl[ F(l,t)\Bigl[\phi \bigl( \bigl\| f(\cdot  l+t+\tau) -f(t+\cdot l)\bigr\|\bigr)_{L^{p(\cdot)}[0,1]}\Bigr]\Biggr] \leq \epsilon .
\end{align*}
\item[(ii)] It is said that the function $f(\cdot)$ is Weyl-$[p,\phi,F]$-almost periodic, $f\in W_{ap}^{[p,\phi,F]}(I: X)$ for short, iff for each $\epsilon>0$ we can find a real number $L>0$ such that any interval $I'\subseteq I$ of length $L$ contains a point $\tau \in  I'$ such that
\begin{align*}
\|f\|_{[p,\phi,F,\tau]}:=\limsup_{l\rightarrow \infty} \sup_{t\in I}\Biggl[ F(l,t) \Bigl[\phi \bigl(\bigl \| f(\cdot l+t+\tau) -f(t+\cdot l)\bigr\|\bigr)_{L^{p(\cdot)}[0,1]} \Bigr]\Biggr]  \leq \epsilon .
\end{align*}
\end{itemize}
\end{defn}

\begin{defn}\label{w-orlicztist}
Suppose that condition \mbox{(B)} holds, $f : I \rightarrow X$ and 
$\| f(\cdot  l+t+\tau) -f(t+\cdot l)\|\in L^{p(x)}([0,1])$
for any $\tau \in I,$ $t\in I$ and $l>0.$
\begin{itemize}
\item[(i)] It is said that the function $f(\cdot)$ is equi-Weyl-$[p,\phi,F]_{1}$-almost periodic, $f\in e-W_{ap}^{[p,\phi,F]_{1}}(I:X)$ for short, iff for each $\epsilon>0$ we can find two real numbers $l>0$ and $L>0$ such that any interval $I'\subseteq I$ of length $L$ contains a point $\tau \in  I'$ such that
\begin{align*}
e-\|f\|_{[p,\phi,F,\tau]_{1}}:=\sup_{t\in I}\Biggl[ F(l,t) \phi\Bigl[ \bigl( \bigl\| f(\cdot  l+t+\tau) -f(t+\cdot l)\bigr\|\bigr)_{L^{p(\cdot)}[0,1]}\Bigr]\Biggr] \leq \epsilon .
\end{align*}
\item[(ii)] It is said that the function $f(\cdot)$ is Weyl-$[p,\phi,F]_{2}$-almost periodic, $f\in W_{ap}^{[p,\phi,F]_{2}}(I: X)$ for short, iff for each $\epsilon>0$ we can find a real number $L>0$ such that any interval $I'\subseteq I$ of length $L$ contains a point $\tau \in  I'$ such that
\begin{align*}
\|f\|_{[p,\phi,F,\tau]_{1}}:=\limsup_{l\rightarrow \infty} \sup_{t\in I}\Biggl[ F(l,t) \phi \Bigl[ \bigl(\bigl \| f(\cdot l+t+\tau) -f(t+\cdot l)\bigr\|\bigr)_{L^{p(\cdot)}[0,1]} \Bigr]\Biggr]  \leq \epsilon .
\end{align*}
\end{itemize}
\end{defn}

\begin{defn}\label{w-orlicztistr}
Suppose that condition \mbox{(B)} holds, $f : I \rightarrow X$ and 
$ \| f(\cdot  l+t+\tau) -f(t+\cdot l)\|\in L^{p(x)}([0,1])$
for any $\tau \in I,$ $t\in I$ and $l>0.$
\begin{itemize}
\item[(i)] It is said that the function $f(\cdot)$ is equi-Weyl-$[p,\phi,F]_{2}$-almost periodic, $f\in e-W_{ap}^{[p,\phi,F]_{2}}(I:X)$ for short, iff for each $\epsilon>0$ we can find two real numbers $l>0$ and $L>0$ such that any interval $I'\subseteq I$ of length $L$ contains a point $\tau \in  I'$ such that
\begin{align*}
e-\|f\|_{[p,\phi,F,\tau]_{2}}:=\sup_{t\in I}\phi \Biggl[ F(l,t)\Bigl[ \bigl( \bigl\| f(\cdot  l+t+\tau) -f(t+\cdot l)\bigr\|\bigr)_{L^{p(\cdot)}[0,1]}\Bigr]\Biggr] \leq \epsilon .
\end{align*}
\item[(ii)] It is said that the function $f(\cdot)$ is Weyl-$[p,\phi,F]_{2}$-almost periodic, $f\in W_{ap}^{[p,\phi,F]_{2}}(I: X)$ for short, iff for each $\epsilon>0$ we can find a real number $L>0$ such that any interval $I'\subseteq I$ of length $L$ contains a point $\tau \in  I'$ such that
\begin{align*}
\|f\|_{[p,\phi,F,\tau]_{2}}:=\limsup_{l\rightarrow \infty} \sup_{t\in I}\phi \Biggl[ F(l,t) \Bigl[ \bigl(\bigl \| f(\cdot l+t+\tau) -f(t+\cdot l)\bigr\|\bigr)_{L^{p(\cdot)}[0,1]} \Bigr]\Biggr]  \leq \epsilon .
\end{align*}
\end{itemize}
\end{defn}

\begin{rem}\label{zajev}
\begin{itemize}
\item[(i)]
Let $p\in {\mathcal P}([0,1]),$ let $I={\mathbb R}$ or $I=[0,\infty),$ and let a function $f\in L_{S}^{p(x)}(I:X)$ be Stepanov $p(x)$-almost periodic. Then it readily follows that $f(\cdot)$ 
is equi-Weyl-$[p,\phi,F]$-almost periodic with $\phi(x)\equiv x$ and $F(l,t)\equiv 1.$ 
\item[(ii)] In the case that $p(x)\equiv p\in [1,\infty),$ it can be simply verified that the class of (equi)-Weyl-$[p,\phi,[l/\psi(l)]^{1/p}]$-almost periodic functions, resp. 
(equi)-Weyl-$[p,\phi,[l/\psi(l)]^{1/p}]_{2}$-almost periodic functions, coincides with the class of (equi)-Weyl-$(p,\phi,\psi)$-almost periodic functions, resp. (equi)-Weyl-$(p,\phi,\psi)_{2}$-almost periodic functions. The classes of (equi)-Weyl-$[p,\phi,[l/\psi(l)]^{1/p}]_{1}$-almost periodic functions and (equi)-Weyl-$(p,\phi,\psi)_{1}$-almost periodic functions coincide
provided that $\phi(cx)=c\phi(x)$ for all $c,\ x\geq 0.$
\item[(iii)] It can be simply verified that the validity of condition
\begin{itemize}
\item[(D):] For any $\tau_{0}\in I$ there exists $c>0$ such that
$$
\frac{F(l,t)}{F(l,t+\tau_{0})}\leq c,\quad t\in I,\ l>0
$$
\end{itemize}
implies that the spaces $(e-)W_{ap}^{[p,\phi,F]}(I:X)$ and $(e-)W_{ap}^{[p,\phi,F]_{1}}(I:X)$
are translation invariant; this particularly holds provided the function $F(l,t)$ does not depend on the variable $t.$ Furthermore, the space $(e-)W_{ap}^{[p,\phi,F]_{2}}(I:X)$ is translation invariant provided condition
\begin{itemize}
\item[(D)':] For any $\tau_{0}\in I$ there exists $c>0$ such that 
$$
\phi(F(l,t)x)\leq c\phi \bigl(F(l,t+\tau_{0})x\bigr),\quad x\geq 0,\ t\in I,\ l>0.
$$
\end{itemize}
\item[(iv)] If $p,\ q\in {\mathcal P}([0,1])$ and $q(x)\leq p(x)$ for a.e. $x\in [0,1],$ then Lemma \ref{aux}(ii) yields that any (equi)-Weyl-$[p,\phi,F]$-almost periodic function is 
(equi)-Weyl-$[q,\phi,F]$-almost periodic. Furthermore, condition $x,\ y\geq 0$ and $x\leq cy$ implies $\phi(x)\leq c\phi(y),$ resp.  $x,\ y\geq 0$ and $x\leq cy$ implies $\phi(F(l,t)x)\leq c\phi(F(l,t)y)$ for all $l>0$ and $t\in I,$ ensures that any (equi)-Weyl-$[p,\phi,F]_{1}$-almost periodic function is 
(equi)-Weyl-$[q,\phi,F]_{1}$-almost periodic, resp. any (equi)-Weyl-$[p,\phi,F]_{2}$-almost periodic function is 
(equi)-Weyl-$[q,\phi,F]_{2}$-almost periodic.
\item[(v)] It is clear that, if $f(\cdot)$ is an (equi)-Weyl-$[p,\phi,F]$-almost periodic function, resp. (equi)-Weyl-$[p,\phi,F]_{1}$-almost periodic function, and 
$F(l,t)\geq F_{1}(l,t)$ for every $l>0$ and $t\in I,$ then $f(\cdot)$ is (equi)-Weyl-$[p,\phi,F_{1}]$-almost periodic, resp. (equi)-Weyl-$[p,\phi,F_{1}]_{1}$-almost periodic. Furthermore, any (equi)-Weyl-$[p,\phi,F]_{2}$-almost periodic function is (equi)-Weyl-$[p,\phi,F]_{2}$-almost periodic provided that $F(l,t)\geq F_{1}(l,t)$ for every $l>0,$ $t\in I$ and $\phi(\cdot)$ is monotonically increasing, or $F(l,t)\leq F_{1}(l,t)$ for every $l>0,$ $t\in I$ and $\phi(\cdot)$ is monotonically decreasing.
\item[(vi)] If $f(\cdot)$ is an (equi)-Weyl-$[p,\phi,F]$-almost periodic function,  $\phi_{1} ( \| f(\cdot  l+t+\tau) -f(t+\cdot l)\|)$
is measurable for any $\tau \in I,$ $t\in I,$ $l>0,$ and $0\leq \phi_{1}\leq \phi,$
then Lemma \ref{aux}(iii) yields that $f(\cdot)$ is an (equi)-Weyl-$[p,\phi_{1},F]$-almost periodic. Furthermore, if $0\leq \phi_{1}\leq \phi ,$ only, and 
$f(\cdot)$ is an (equi)-Weyl-$[p,\phi,F]_{i}$-almost periodic function, then $f(\cdot)$ is an (equi)-Weyl-$[p,\phi_{1},F]_{i}$-almost periodic function, where $i=1,2.$
\end{itemize}
\end{rem}

In the case that the function $\phi(\cdot)$ is convex and $p(x)\equiv 1$, we have the following proposition which can be shown following the lines of the proof
of Proposition \ref{stevate}:

\begin{prop}\label{nasalo}
Suppose that $\phi(\cdot)$ is convex, $p(x)\equiv 1,$ $f : I \rightarrow X$ and 
$\| f(\cdot  l+t+\tau) -f(t+\cdot l)\|\in L^{p(x)}([0,1])$
for any $\tau \in I,$ $t\in I$ and $l>0.$ Then the following holds:
\begin{itemize}
\item[(i)] $f\in (e-)W_{ap}^{[1,\phi,F]} \Rightarrow f\in (e-)W_{ap}^{[1,\phi,F]_{1}}.$
\item[(ii)] If condition $\emph{(C)}$ holds, then $f\in (e-)W_{ap}^{[1,\phi,\varphi \circ F]} \Rightarrow f\in (e-)W_{ap}^{[1,\phi,F]_{2}}.$ 
\end{itemize} 
\end{prop}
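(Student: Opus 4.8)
The plan is to follow the proof of Proposition \ref{stevate} almost verbatim, the one simplification being that, since $p(x)\equiv 1$, the quantity $(\,\cdot\,)_{L^{p(\cdot)}[0,1]}$ reduces to the ordinary integral over an interval of \emph{unit} length; hence the Jensen integral inequality applies with no normalizing factor, and the auxiliary factor $l$ that appears in Proposition \ref{stevate} (where one integrates over $[t,t+l]$) does not occur here. Throughout I would abbreviate $g_{t,\tau,l}(s):=\|f(sl+t+\tau)-f(t+sl)\|$, $s\in[0,1]$, so that the three defining seminorms for $p\equiv 1$ read $e\text{-}\|f\|_{[1,\phi,F,\tau]}=\sup_{t}F(l,t)\int_0^1\phi(g_{t,\tau,l})\,ds$, $e\text{-}\|f\|_{[1,\phi,F,\tau]_1}=\sup_{t}F(l,t)\,\phi(\int_0^1 g_{t,\tau,l}\,ds)$ and $e\text{-}\|f\|_{[1,\phi,F,\tau]_2}=\sup_{t}\phi(F(l,t)\int_0^1 g_{t,\tau,l}\,ds)$.

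For part (i), I would fix $\epsilon>0$ and take $l,L>0$ together with the translate $\tau$ supplied by the hypothesis $f\in (e\text{-})W_{ap}^{[1,\phi,F]}$. Since $\phi$ is convex and $[0,1]$ carries unit Lebesgue measure, the Jensen integral inequality gives
$$
\phi\Bigl(\int_0^1 g_{t,\tau,l}(s)\,ds\Bigr)\le \int_0^1 \phi\bigl(g_{t,\tau,l}(s)\bigr)\,ds,\qquad t\in I.
$$
Multiplying by $F(l,t)\ge 0$ and taking the supremum over $t\in I$ yields $e\text{-}\|f\|_{[1,\phi,F,\tau]_1}\le e\text{-}\|f\|_{[1,\phi,F,\tau]}\le\epsilon$, which is exactly the defining estimate for $(e\text{-})W_{ap}^{[1,\phi,F]_1}$. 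The case without the prefix $e\text{-}$ is handled identically after prepending $\limsup_{l\to\infty}$ to both sides, since the displayed inequality holds for every $l>0$ and the limit superior is monotone.

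For part (ii), I would assume (C), fix $\epsilon>0$, and extract $l,L,\tau$ from $f\in (e\text{-})W_{ap}^{[1,\phi,\varphi\circ F]}$. The idea is first to peel off the scalar $F(l,t)$ via (C) and then apply Jensen as above: for each $t\in I$,
$$
\phi\Bigl(F(l,t)\int_0^1 g_{t,\tau,l}(s)\,ds\Bigr)\le \varphi\bigl(F(l,t)\bigr)\,\phi\Bigl(\int_0^1 g_{t,\tau,l}(s)\,ds\Bigr)\le \varphi\bigl(F(l,t)\bigr)\int_0^1 \phi\bigl(g_{t,\tau,l}(s)\bigr)\,ds.
$$
Taking the supremum over $t\in I$ gives $e\text{-}\|f\|_{[1,\phi,F,\tau]_2}\le e\text{-}\|f\|_{[1,\phi,\varphi\circ F,\tau]}\le\epsilon$, which places $f$ in $(e\text{-})W_{ap}^{[1,\phi,F]_2}$; the Weyl version again follows by inserting $\limsup_{l\to\infty}$.

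I do not anticipate a serious obstacle: the whole argument is a short chaining of the Jensen integral inequality with the growth condition (C), and the two inclusions are in fact majorizations of one seminorm by another that hold pointwise in $t$ (and in $l$). The only two points deserving a line of justification are (a) that the unit length of $[0,1]$ is precisely what removes the normalizing factor present in Proposition \ref{stevate}, so Jensen applies cleanly to $\int_0^1(\cdot)\,ds$; and (b) that in part (ii) condition (C) is invoked with the \emph{variable} multiplier $c=F(l,t)$ rather than a fixed constant, which is legitimate exactly because $\phi(cx)\le\varphi(c)\phi(x)$ is assumed for \emph{all} $c>0$ and $x\ge 0$.
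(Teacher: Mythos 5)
Your proposal is correct and follows essentially the same route the paper intends: Proposition \ref{nasalo} is stated in the paper with the remark that it "can be shown following the lines of the proof of Proposition \ref{stevate}", and your argument is exactly that proof specialized to the unit interval, where Jensen's inequality needs no normalizing factor (so $F$ is unchanged in (i)) and condition (C) is invoked with the multiplier $F(l,t)$ in (ii). Both steps, including the passage to the Weyl (non-equi) classes via monotonicity of $\limsup_{l\to\infty}$, are sound.
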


Regarding Proposition \ref{stevate} and Proposition \ref{nasalo}, it should be observed that the reverse inclusions and inequalities can be obtained assuming condition
\begin{itemize}
\item[(C)':] $\phi(\cdot)$ is concave and there exists a function $\varphi :[0,\infty) \rightarrow [0,\infty)$ such that $\phi(lx)\geq \varphi(l) \phi(x)$ for all $l>0$ and $x\geq 0.$  
\end{itemize}

It is clear that any (equi-)Weyl-$p$-almost periodic function $f(\cdot)$ 
is (equi-)Weyl-$(p,\phi,\psi)$-almost periodic with $p(x)\equiv p \in [1,\infty),$ $\phi(x)=x$, $\psi(l)=l.$ Concerning this observation, we wish to present 
two illustrative examples:

\begin{example}\label{wert}
Let us recall (see, e.g., Example 4.27 in the survey article \cite{deda} by J. Andres, A. M. Bersani, R. F. Grande and the monograph \cite{nova-mono}) that the function $g(\cdot):=\chi_{[0,1/2]}(\cdot)$ is equi-Weyl-$p$-almost periodic for any $p\in [1,\infty)$ but not Stepanov almost periodic.
Since for each $l,\ \tau \in {\mathbb R}$ we have 
$$
\Biggl(\sup_{t\in {\mathbb R}}\int_{t}^{t+l}|f(x+\tau)-f(x)|^{p}\, dx\Biggr)^{1/p}\leq 1,
$$ 
it can be easily seen that the function $g(\cdot)$ is equi-Weyl-$(p,x,\psi)$-almost periodic for any function $\psi : (0,\infty) \rightarrow (0,\infty)$ such that $\lim_{l\rightarrow +\infty}\psi(l)=+\infty ;$ moreover, for each $\epsilon \in (0,1/2)$ we can always find
$t\in {\mathbb R}$ such that 
$$
\int_{t}^{t+1}|f(x+\tau)-f(x)|^{p}\, dx >\epsilon,\quad \tau>\epsilon.
$$
Hence, the function $g(\cdot)$ cannot be equi-Weyl-$(p,x,l^{0})$-almost periodic. Taking into account Remark \ref{zajev1}(iii) and the above conclusions, we get that
$g(\cdot)$ is equi-Weyl-$(p,x,l^{\sigma})$-almost periodic iff $\sigma \neq 0.$
\end{example}

\begin{example}\label{wert1}
Let us recall (\cite[Example 4.29]{deda}, \cite{nova-mono}) that the Heaviside function $g(\cdot):=\chi_{[0,\infty)}(\cdot)$ is not equi-Weyl-$1$-almost periodic but it is Weyl-$p$-almost periodic for any number $p\in [1,\infty).$ Furthermore, it is not difficult to see that for each real number $\tau \in {\mathbb R}$ we have that $\sup_{t\in {\mathbb R}}(\int^{t+l}_{t}|f(x+\tau)-f(x)|^{p}\, dx)^{1/p}=|\tau|^{1/p}$ for any real number $l>|\tau|.$ This simply implies that the function $g(\cdot)$ is Weyl-$(p,x,\psi)$-almost periodic for any function $\psi : (0,\infty) \rightarrow (0,\infty)$
such that $\lim_{l\rightarrow +\infty}\psi(l)=+\infty $ as well as that $g(\cdot)$ cannot be Weyl-$(p,x,\psi)$-almost periodic for any function $\psi : (0,\infty) \rightarrow (0,\infty)$
such that $\limsup_{l\rightarrow +\infty}[\psi(l)]^{-1}>0$; in particular, $g(\cdot)$ is Weyl-$(p,x,l^{\sigma})$-almost periodic iff $\sigma>0.$ On the other hand, the function $g(\cdot)$ cannot be equi-Weyl-$(p,x,\psi)$-almost periodic for any function $\psi : (0,\infty) \rightarrow (0,\infty);$ in actual fact, if we suppose contrary, then the equation \eqref{profice} is violated with $|\tau|^{1/p}>\epsilon \psi(l)^{1/p}.$ See also \cite[Example 2.11.15-Example 2.11.17]{nova-mono}.
\end{example}

\section{Weyl ergodic components with variable exponents}\label{weylerg}

Unless stated otherwise, in this section we assume that  $p\in {\mathcal P}([0,\infty)),$
$\phi : [0,\infty) \rightarrow [0,\infty)$ and $F: (0,\infty) \times [0,\infty) \rightarrow (0,\infty).$ 
 In the following three definitions, we extend the notion of an (equi-)Weyl-$p$-vanishing function introduced in \cite{weyl}, where the case $p(x)\equiv p\in [1,\infty),$ $F(l,t)\equiv l^{(-1)/p}$ and $\phi(x)\equiv x$ has been considered:

\begin{defn}\label{stea-weyl0px}
\begin{itemize}
\item[(i)] It is said that a function $q : [0,\infty) \rightarrow X$ is equi-Weyl-$(p,\phi,F)$-vanishing iff $\phi(\|q(t+\cdot)\| )\in L^{p(\cdot)}[x,x+l]$ for all $t,\ x,\ l>0$ and 
\begin{align}\label{radnasebipx01}
\lim_{l\rightarrow +\infty}\, \limsup_{t\rightarrow +\infty}\ \ \sup_{x\geq 0}\Bigl[ F(l,t)\bigl \| \phi (\|q(t+v)\| )\bigr\|_{L^{p(v)}[x,x+l]}\Bigr]=0.
\end{align}
\item[(ii)] It is said that a function $q : [0,\infty) \rightarrow X$ is Weyl-$(p,\phi,F)$-vanishing iff $\phi(q(t+\cdot))\in L^{p(\cdot)}[x,x+l]$ for all $t,\ x,\ l>0$ and
\begin{align}\label{radnasebipx012345}
\lim_{t\rightarrow +\infty}\, \limsup_{l\rightarrow +\infty}\ \ \sup_{x\geq 0}\Bigl[ F(l,t)\bigl \| \phi(\|q(t+v)\| )\bigr\|_{L^{p(v)}[x,x+l]}\Bigr]=0.
\end{align}
\end{itemize}
\end{defn}

\begin{defn}\label{stea-weyl0px11}
\begin{itemize}
\item[(i)] It is said that a function $q : [0,\infty) \rightarrow X$ is equi-Weyl-$(p,\phi,F)_{1}$-vanishing iff $q(t+\cdot)\in L^{p(\cdot)}[x,x+l]$ for all $t,\ x,\ l>0$ and
\begin{align}\label{radnasebipx1}
\lim_{l\rightarrow +\infty}\, \limsup_{t\rightarrow +\infty}\ \ \sup_{x\geq 0}\Bigl[ F(l,t)\phi\Bigl(\bigl \| q(t+v)\bigr\|_{L^{p(v)}[x,x+l]}\Bigr)\Bigr]=0.
\end{align}
\item[(ii)] It is said that a function $q : [0,\infty) \rightarrow X$ is Weyl-$(p,\phi,F)_{1}$-vanishing iff $q(t+\cdot)\in L^{p(\cdot)}[x,x+l]$ for all $t,\ x,\ l>0$ and
\begin{align}\label{radnasebipx21}
\lim_{t\rightarrow +\infty}\, \limsup_{l\rightarrow +\infty}\ \ \sup_{x\geq 0}\Bigl[ F(l,t)\phi\Bigl(\bigl \| q(t+v)\bigr\|_{L^{p(v)}[x,x+l]}\Bigr)\Bigr]=0.
\end{align}
\end{itemize}
\end{defn}

\begin{defn}\label{stea-weyl0px2}
\begin{itemize}
\item[(i)] It is said that a function $q : [0,\infty) \rightarrow X$ is equi-Weyl-$(p,\phi,F)_{2}$-vanishing iff $q(t+\cdot)\in L^{p(\cdot)}[x,x+l]$ for all $t,\ x,\ l>0$  and
\begin{align}\label{radnasebipx}
\lim_{l\rightarrow +\infty}\, \limsup_{t\rightarrow +\infty}\ \ \sup_{x\geq 0}\phi\Bigl[ F(l,t)\bigl \| q(t+v)\bigr\|_{L^{p(v)}[x,x+l]}\Bigr]=0.
\end{align}
\item[(ii)] It is said that a function $q : [0,\infty) \rightarrow X$ is Weyl-$(p,\phi,F)_{2}$-vanishing iff $q(t+\cdot)\in L^{p(\cdot)}[x,x+l]$ for all $t,\ x,\ l>0$ and
\begin{align}\label{radnasebipxx}
\lim_{t\rightarrow +\infty}\, \limsup_{l\rightarrow +\infty}\ \ \sup_{x\geq 0}\phi\Bigl[ F(l,t)\bigl \| q(t+v)\bigr\|_{L^{p(v)}[x,x+l]}\Bigr]=0.
\end{align}
\end{itemize}
\end{defn}

Denote by $W^{p(x)}_{\phi, F,0}([0,\infty):X)$ and $e-W^{p(x)}_{\phi, F, 0}([0,\infty):X)$ [$W^{p(x);1}_{\phi, F,0}([0,\infty):X)$ and $e-W^{p(x);1}_{\phi, F, 0}([0,\infty):X)$/$W^{p(x);2}_{\phi, F,0}([0,\infty):X)$ and $e-W^{p(x);2}_{\phi, F, 0}([0,\infty):X)$] the sets consisting of all Weyl-$(p,\phi,F)$-vanishing functions and equi-Weyl-$(p,\phi, F)$-vanishing functions [Weyl-$(p,\phi,F)_{1}$-vanishing functions and equi-Weyl-$(p,\phi, F)_{1}$-vanishing functions/Weyl-$(p,\phi,F)_{2}$-vanishing functions and equi-Weyl-$(p,\phi, F)_{2}$-vanishing functions], respectively. In the case that $p(x)\equiv p\in [1,\infty),$ $F(l,t)\equiv l^{(-1)/p}$ and $\phi(x)\equiv x,$ the above classes coincide and we denote them by $W^{p}_{0}([0,\infty):X) $ and $e-W^{p}_{0}([0,\infty):X).$ These classes 
are very general and we want only to recall that, for instance,
an equi-Weyl-$p$-vanishing function $q(\cdot)$ need not be bounded as $t\rightarrow +\infty$ (\cite{weyl}).

A great number of very simple examples can be constructed in order to show that, in general case, the limit $\lim_{t\rightarrow +\infty}\sup_{x\geq 0}[ F(l,t) \| \phi(\|q(t+v)\|)\|_{L^{p(v)}[x,x+l]}]$ in the equation \eqref{radnasebipx01} does not exist for any fixed number $l>0$; the same holds for the equations \eqref{radnasebipx012345}-\eqref{radnasebipxx}. The question when these limits exist is meaningful but it will not be analyzed here for the sake of brevity.

Further on, we have the following observation:

\begin{rem}\label{jaz}
\begin{itemize}
\item[(i)]
Suppose that the function $\phi(\cdot)$ is monotonically increasing and satisfies that for each scalars $\alpha,\ \beta\geq 0$ there exists a finite real number 
$\pi(\alpha,\beta)>0$
such that, for every non-negative real numbers
$x,\ y\geq 0,$ we have 
$$
\phi(\alpha x+\beta y)\leq \pi(\alpha,\beta)[\phi(x)+\phi(y)].
$$ 
Then (equi-)Weyl-$(p,\phi,F)$-vanishing functions and (equi-)Weyl-$(p,\phi,F)_{i}$-vanishing functions, where $i=1,2,$ form a vector space. 
\item[(ii)] If the function $F(l,t)$ satisfies condition (D), resp. (D)', then the space of (equi-)Weyl-$(p,\phi,F)$-vanishing functions and the space of (equi-)Weyl-$(p,\phi,F)_{1}$-vanishing functions, resp. 
the space of (equi-)Weyl-$(p,\phi,F)_{2}$-vanishing functions, are translation invariant. 
\end{itemize}
\end{rem}

In this paper, we will not follow the approach obeyed in \cite{AP} and previous section, with the basic assumption
$p\in {\mathcal P}([0,1]).$ 
With regards to this question, 
we will present only one illustrative example:

\begin{example}\label{lund-quarter}
Suppose that $p\in {\mathcal P}([0,1]).$
Let us recall that the space of Stepanov $p(\cdot)$-vanishing functions (see  \cite{AP}), denoted by $S^{p(x)}_{0}([0,\infty):X),$ is consisted 
of those functions
$q\in  L_{S}^{p(x)}([0,\infty): X)$ such that $\hat{q}\in C_{0}([0,\infty) : L^{p(x)}([0,1]:X)).$ The notion of space $S^{p(x)}_{0}([0,\infty):X)$ can be extended in many other ways; for example:
\begin{itemize}
\item[(i)] Let $\phi : (0,\infty) \rightarrow (0,\infty)$ and $G : (0,\infty) \rightarrow (0,\infty).$ Then we say that a function $q(\cdot)$ belongs to the space $S^{p(\cdot)}_{\phi, G,0}([0,\infty):X)$ iff $\phi(\|q(t+\cdot)\|) \in L^{p(\cdot)}[0,1]$ for all $t\geq 0$ 
and
$$
\lim_{t\rightarrow +\infty}G(t)\bigl \| \phi(\|q(t+v)\|) \bigr\|_{L^{p(v)}[0,1]}=0.
$$ 
In this part, as well as in parts (ii) and (iii), we will use the $1$-periodic extension of function $p(\cdot)$ to the non-negative real axis, denoted henceforth by $p_{1}(\cdot).$ Then the class $S^{p(\cdot)}_{\phi, G,0}([0,\infty):X)$ is contained in the class of
equi-Weyl-$(p_{1},\phi,F)$-vanishing functions with a suitable chosen function $F(l,t).$ More precisely, let a number $\epsilon>0$ be fixed. Then there exists a sufficiently large real number $t_{0}>0$ such that 
$\| \phi(q(t+v)) \|_{L^{p(v)}[0,1]}<\epsilon G(t)^{-1}$ for all numbers $t\geq t_{0}.$ This implies that, for every
$t\geq t_{0},$ $x\geq 0$ and $m\in {\mathbb N}_{0},$ we have
$$
\int^{1}_{0}\varphi_{p(v)}\Bigl(\phi(\|q(t+v+\lfloor x \rfloor +m)\|) /\bigl[\epsilon G(t)^{-1}\bigr] \Bigr)\, dv \leq 1.
$$ 
Using the inequality ($x\geq 0,$ $l>0$)
\begin{align*}
\int^{x+l}_{x}&\varphi_{p_{1}(v)}\Bigl( \phi(\|q(t+v)\|)/\bigl[\epsilon G(t)^{-1}\bigr] \Bigr)\, dv 
\\ & \leq \sum_{k=0}^{l}\int^{\lfloor x \rfloor+k+1}_{\lfloor x \rfloor+k}\varphi_{p_{1}(v)}\Bigl(\phi(\| q(t+v)\|)/\bigl[\epsilon G(t)^{-1}\bigr] \Bigr)\, dv,
\end{align*}
the above yields
\begin{align*}
\int^{x+l}_{x}& \varphi_{p_{1}(v)}\Bigl( \phi(\| q(t+v)\|)/\bigl[\epsilon G(t)^{-1}\bigr] \Bigr)\, dv \leq l+1,\, \mbox{ i.e., }
\\ & \int^{x+l}_{x}\frac{1}{l+1}\varphi_{p_{1}(v)}\Bigl(\phi(\| q(t+v)\|)/\bigl[\epsilon G(t)^{-1}\bigr] \Bigr)\, dv \leq 1.
\end{align*}
Since
$$
\varphi_{p_{1}(v)}\Bigl( \phi(\| q(t+v)\|)/\bigl[\epsilon (l+1)G(t)^{-1}\bigr] \Bigr) \leq \frac{1}{l+1}\varphi_{p_{1}(v)}\Bigl( \phi(\| q(t+v)\| )/\bigl[\epsilon G(t)^{-1}\bigr] \Bigr),
$$
the above implies $\| \phi(\| q(t+v)\|) \|_{L^{p(v)}[x,x+l]}<\epsilon G(t)^{-1}(1+l)$ for all $t\geq t_{0},$ $x\geq 0$ and $l>0.$ Hence, the required conclusion holds provided that there exists a finite real constant $C>0$ such that
$$
| F(l,t)G(t)^{-1}(1+l)|\leq C,\quad l>0,\ t>0.
$$
\item[(ii)] Let $\phi : (0,\infty) \rightarrow (0,\infty)$ and $G : (0,\infty) \rightarrow (0,\infty).$ Then we say that a function $q(\cdot)$ belongs to the space $S^{p(\cdot)}_{\phi, G,0;1}([0,\infty):X)$ iff $q(t+\cdot) \in L^{p(\cdot)}[0,1]$ for all $t\geq 0$ 
and
$$
\lim_{t\rightarrow +\infty}G(t)\phi\Bigl(\bigl \| q(t+v) \bigr\|_{L^{p(v)}[0,1]}\Bigr)=0.
$$ 
Then the class $S^{p(\cdot)}_{\phi, G,0;1}([0,\infty):X)$ is contained in the class of
equi-Weyl-$(p_{1},\phi,F)_{1}$-vanishing functions with a suitable chosen function $F(l,t).$ Arguing as in (i), this holds provided that, for example, $\sup \phi^{-1}([0,G(t)^{-1}])<\infty$ and
$$
\lim_{l\rightarrow +\infty}\, \limsup_{t\rightarrow +\infty}\ \ F(l,t)(l+1)\sup \phi^{-1}\Bigl(\bigl[0,G(t)^{-1}\bigr]\Bigr)=0.
$$
\item[(iii)] Let $\phi : (0,\infty) \rightarrow (0,\infty)$ and $G : (0,\infty) \rightarrow (0,\infty).$ Then we say that a function $q(\cdot)$ belongs to the space $S^{p(\cdot)}_{\phi, G,0;2}([0,\infty):X)$ iff $q(t+\cdot) \in L^{p(\cdot)}[0,1]$ for all $t\geq 0$ 
and
$$
\lim_{t\rightarrow +\infty}\phi\Bigl(G(t)\bigl \| \phi(q(t+v)\bigr\|_{L^{p(v)}[0,1]}\Bigr)=0.
$$ 
Then the class $S^{p(\cdot)}_{\phi, G,0;2}([0,\infty):X)$ is contained in the class of
equi-Weyl-$(p_{1},\phi,F)_{2}$-vanishing functions with a suitable chosen function $F(l,t).$ Arguing as in (i), this holds provided that, for example, the function $\phi(\cdot)$ is monotonically increasing, $\sup \phi^{-1}([0,1]) <+\infty$ and 
$$
\lim_{l\rightarrow +\infty}\, \limsup_{t\rightarrow +\infty}\ \ \phi \Bigl(F(l,t)G(t)^{-1}(1+l)\sup \phi^{-1}([0,1])\Bigr)=0.
$$
\end{itemize}
\end{example}

An analogue of Proposition \ref{stevate} can be proved for  (equi-)Weyl-$(p,\phi,F)$-vanishing functions and (equi-)Weyl-$(p,\phi,F)_{i}$-vanishing functions, provided that the function $\phi(\cdot)$ is convex and $q(v)\equiv 1.$ Furthermore, an analogue of Remark \ref{zajev1}(i)-(ii) can be formulated for  (equi-)Weyl-$(p,\phi,F)$-vanishing functions and (equi-)Weyl-$(p,\phi,F)_{i}$-vanishing functions. Concerning Lemma \ref{aux}(ii) and Remark \ref{zajev}(v), it should be noted that the embedding type result established in already mentioned \cite[Corollary 3.3.4]{variable}
for scalar-valued functions (see also Lemma \ref{aux}(ii)) enables one to see that the following expected result holds true:

\begin{prop}\label{tirsen}
Suppose $r,\ p\in {\mathcal P}([0,\infty))$ and $1\leq r(x) \leq p(x)$ for a.e. $x\geq 0.$ 
Let $F_{1}(l,t)=2\max(l^{\mbox{essinf} (1/r(x)-1/p(x))},l^{\mbox{esssup} (1/r(x)-1/p(x))})F(l,t)$ or $F_{1}(l,t)=2(1+l)F(l,t)$ for all $l>0$ and $t\geq 0.$
Then we have:
\begin{itemize}
\item[(i)] If the function $q(\cdot)$ is (equi-)Weyl-$(r,\phi,F)$-vanishing provided that $q(\cdot)$ is (equi-)Weyl-$(p,\phi,F_{1})$-vanishing. 
\item[(ii)] Suppose that there exists a function $\varphi : [0,\infty) \rightarrow [0,\infty)$ such that $\phi(cx)\leq \varphi(c)\phi(x)$ for all $c\geq 0$ and $x\geq 0.$ 
Let $F_{2}(l,t)=\varphi(2(1+l))F(l,t)$ or $F_{1}(l,t)=\varphi(2\max(l^{\mbox{essinf} (1/r(x)-1/p(x))},l^{\mbox{esssup} (1/r(x)-1/p(x))}))F(l,t)$ for $l>0$ and $t\geq 0.$
Then the function $q(\cdot)$ is (equi-)Weyl-$(r,\phi,F)_{1}$-vanishing provided that $q(\cdot)$ is (equi-)Weyl-$(p,\phi,F_{2})_{1}$-vanishing. 
\item[(iii)]  If $\phi(\cdot)$ is monotonically increasing, then the function $q(\cdot)$ is (equi-)Weyl-$(r,\phi,F)_{2}$-vanishing provided that $q(\cdot)$ is (equi-)Weyl-$(p,\phi,F_{1})_{2}$-vanishing. 
\end{itemize}
\end{prop}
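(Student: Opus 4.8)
The plan is to reduce all three parts to the embedding estimate from Lemma \ref{aux}(ii), together with its quantitative form coming from \cite[Corollary 3.3.4]{variable}. The essential point is that whenever $1\le r(x)\le p(x)$ a.e., any finite interval $[x,x+l]$ has finite measure, so on that window $L^{p(\cdot)}[x,x+l]$ embeds continuously into $L^{r(\cdot)}[x,x+l]$ with a constant controlled by $l$. Concretely, for a measurable $X$-valued $g$ one has
\begin{align*}
\|g\|_{L^{r(\cdot)}[x,x+l]}\le 2\max\bigl(l^{\,\mathrm{essinf}(1/r-1/p)},\,l^{\,\mathrm{esssup}(1/r-1/p)}\bigr)\,\|g\|_{L^{p(\cdot)}[x,x+l]},
\end{align*}
and also the cruder bound with the factor $2(1+l)$ that is already invoked in Example \ref{primer-triv}(iii). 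These two choices are exactly the two admissible definitions of $F_{1}$ in the statement, so the proof splits according to which bound one uses, but the mechanism is identical.

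For part (i) I would apply the displayed embedding with $g=\phi(\|q(t+\cdot)\|)$ on the window $[x,x+l]$. Multiplying through by $F(l,t)$ and taking $\sup_{x\ge 0}$ shows
\begin{align*}
\sup_{x\ge 0}\Bigl[F(l,t)\bigl\|\phi(\|q(t+v)\|)\bigr\|_{L^{r(v)}[x,x+l]}\Bigr]\le \sup_{x\ge 0}\Bigl[F_{1}(l,t)\bigl\|\phi(\|q(t+v)\|)\bigr\|_{L^{p(v)}[x,x+l]}\Bigr],
\end{align*}
with $F_{1}$ as defined. Then I take $\limsup_{t\to+\infty}$ followed by $\lim_{l\to+\infty}$ (for the equi-case the order of the outer limit and $\limsup$ is swapped exactly as in Definition \ref{stea-weyl0px}); since the right-hand side tends to $0$ by the hypothesis that $q$ is (equi-)Weyl-$(p,\phi,F_{1})$-vanishing, the left-hand side does too, giving the (equi-)Weyl-$(r,\phi,F)$-vanishing property. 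Part (iii) is the same argument applied to $g=q(t+\cdot)$ directly, except that the outer $\phi$ now sits \emph{outside} the norm; here I would use monotonicity of $\phi$ to pass the embedding inequality through $\phi$, so that $\phi\bigl(F(l,t)\|q(t+v)\|_{L^{r(v)}[x,x+l]}\bigr)\le \phi\bigl(F_{1}(l,t)\|q(t+v)\|_{L^{p(v)}[x,x+l]}\bigr)$, and then conclude as before.

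Part (ii) is where I expect the only real friction, because the $\phi$ is applied to the $L^{r(\cdot)}$-norm while the embedding constant multiplies the norm \emph{inside} $\phi$. The device is the growth hypothesis $\phi(cx)\le\varphi(c)\phi(x)$: writing $c$ for the embedding constant (either $2(1+l)$ or the $\max$-of-powers expression), I estimate
\begin{align*}
\phi\Bigl(F(l,t)\|q(t+v)\|_{L^{r(v)}[x,x+l]}\Bigr)\le \phi\Bigl(F(l,t)\,c\,\|q(t+v)\|_{L^{p(v)}[x,x+l]}\Bigr)\le \varphi(c)\,\phi\Bigl(F(l,t)\|q(t+v)\|_{L^{p(v)}[x,x+l]}\Bigr),
\end{align*}
where in the first step I used monotonicity of $\phi$ together with the embedding, and in the second the growth condition with $c$ the appropriate embedding constant. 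Absorbing $\varphi(c)$ into $F$ yields precisely $F_{2}(l,t)=\varphi(2(1+l))F(l,t)$ or the power-type variant, matching the statement. One caveat I would flag: $c=c(l)$ depends on $l$, so $\varphi(c)$ must be pulled out \emph{before} taking $\lim_{l\to+\infty}$; this is legitimate because at each fixed $l$ the factor is a constant, and the redefined $F_{2}$ carries the $l$-dependence, so the limit defining the (equi-)Weyl-$(p,\phi,F_{2})_{1}$-vanishing condition is exactly what controls the $(r,\phi,F)_{1}$-limit. With that bookkeeping the three parts follow uniformly.
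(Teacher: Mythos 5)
Your route is the one the paper itself intends: the paper offers no written proof of Proposition \ref{tirsen}, presenting it as an immediate consequence of the quantitative embedding $L^{p(\cdot)}[x,x+l]\hookrightarrow L^{r(\cdot)}[x,x+l]$ coming from Lemma \ref{aux}(ii) and \cite[Corollary 3.3.4]{variable}, with constant $c(l)=2(1+l)$ or $c(l)=2\max(l^{\mathrm{essinf}(1/r(x)-1/p(x))},l^{\mathrm{esssup}(1/r(x)-1/p(x))})$, applied on each window $[x,x+l]$ and absorbed into the weight. Your parts (i) and (iii) execute this correctly: the relevant inequality holds pointwise in $(l,t,x)$, hence passes through $\sup_{x\geq 0}$ and through either order of the iterated limits, and in (iii) the monotonicity of $\phi$ is used exactly where it is needed.

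Part (ii), however, is written for the wrong class. The $(\cdot)_{1}$-vanishing condition places $F(l,t)$ \emph{outside} $\phi$: the quantity to be controlled is $F(l,t)\,\phi ( \|q(t+v)\|_{L^{r(v)}[x,x+l]} )$, whereas your displayed chain estimates $\phi ( F(l,t)\|q(t+v)\|_{L^{r(v)}[x,x+l]} )$, which is the $(\cdot)_{2}$-structure of part (iii). Consequently your final bound $\varphi(c)\,\phi ( F(l,t)\|q\|_{L^{p(v)}[x,x+l]} )$ is not the hypothesis quantity: being (equi-)Weyl-$(p,\phi,F_{2})_{1}$-vanishing controls $F_{2}(l,t)\,\phi ( \|q\|_{L^{p(v)}[x,x+l]} )=\varphi(c)F(l,t)\,\phi ( \|q\|_{L^{p(v)}[x,x+l]} )$, and $\phi(Fx)$ cannot be compared with $F\phi(x)$ in general, so the chain as written does not connect to the hypothesis. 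The repair is one line and uses only ingredients you already have: apply monotonicity and the growth condition to the norm alone, namely $\phi ( \|q\|_{L^{r(v)}[x,x+l]} ) \leq \phi ( c(l)\,\|q\|_{L^{p(v)}[x,x+l]} ) \leq \varphi(c(l))\,\phi ( \|q\|_{L^{p(v)}[x,x+l]} )$, and then multiply by $F(l,t)$ to land exactly on $F_{2}(l,t)\,\phi ( \|q\|_{L^{p(v)}[x,x+l]} )$. One further point worth recording: this argument (like yours) needs $\phi$ monotonically increasing, an assumption that part (ii) of the statement does not make (only part (iii) does); without monotonicity of $\phi$ (or, alternatively, of $\varphi$, writing $\|q\|_{L^{r}}=c'\|q\|_{L^{p}}$ with $c'\leq c(l)$), the embedding inequality gives no information about $\phi$ of the two norms. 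So the monotonicity you invoke is in fact a supplementary hypothesis that the paper's formulation of (ii) tacitly requires.
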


The case of constant coefficients $1\leq r\leq p$ also deserves attention, when the choices $F_{1}(l,t)=l^{1/r-1/p}F(l,t)$ in (i), (iii) and $F_{1}(l,t)=\varphi(l^{1/r-1/p})F(l,t)$ in (ii)  can be made.

We continue by reexaming the conclusions established in \cite[Example 4.5, Example 4.6]{weyl}:

\begin{example}\label{selja-sam}
Define
$$
q(t):=\sum_{n=0}^{\infty}\chi_{[n^{2},n^{2}+1]}(t),\quad t\geq 0.
$$
Then we know that $\hat{q}\notin C_{0}([0,\infty) : L^{p}([0,1]: {\mathbb C}))$ and the function $q(\cdot)$
is equi-Weyl-$p$-almost periodic for any exponent $p\geq 1;$ see
\cite[Example 4.5]{weyl}.  In this example, we have proved the estimate
$$
\Biggl(\int_{x}^{x+l}\bigl \| q(t+v)\bigr\|^{p}\, dv\Biggr)^{1/p} \leq \Biggl(2+\frac{l}{\sqrt{t}+\sqrt{l}}\Biggr)^{1/p}\leq 2+ \Biggl(\frac{l}{\sqrt{t}+\sqrt{l}}\Biggr)^{1/p},
$$
for any $x\geq 0,\ t\geq 0,\ l>0,$
so that 
the function $q(\cdot)$ is equi-Weyl-$(p,x,F)$-vanishing provided that 
$$
\lim_{l\rightarrow +\infty}\, \limsup_{t\rightarrow +\infty}\ \ F(l,t)\Biggl[2+\Biggl(\frac{l}{\sqrt{t}+\sqrt{l}}\Biggr)^{1/p}\Biggr]=0.
$$
In particular, this holds for function $F(l,t)=l^{\sigma},$ where $\sigma<0.$
\end{example}

\begin{example}\label{selja-samojed}
Define
$$
q(t):=\sum_{n=0}^{\infty}\sqrt{n}\chi_{[n^{2},n^{2}+1]}(t),\quad t\geq 0.
$$
Then we know that
the function $q(\cdot)$ is not equi-Weyl-$p$-vanishing for any exponent $p\geq 1$ as well as that the function $q(\cdot)$
is Weyl-$p$-vanishing for any exponent $p\geq 1;$ see \cite[Example 4.6]{weyl}. In this example, we have proved the estimate
$$
\Biggl(\int_{x}^{x+l}\bigl \| q(t+v)\bigr\|^{p}\, dv\Biggr)^{1/p} \leq \bigl(l+t\bigr)^{1/2p},\quad x\geq 0,\ t\geq 0,\ l>0,
$$
so that 
the function $q(\cdot)$ is Weyl-$(p,x,F)$-vanishing provided that 
$$
\lim_{t\rightarrow +\infty}\, \limsup_{l\rightarrow +\infty}\ \ F(l,t)\bigl(l+t\bigr)^{1/2p}=0.
$$
In particular, this holds for function $F(l,t)=l^{\sigma},$ where $\sigma<(-1)/2p.$
\end{example}

We will present one more illustrative example:

\begin{example}\label{more}
Suppose that $(a_{n})_{n\in {\mathbb N}}$ and $(b_{n})_{n\in {\mathbb N}}$ are two sequences of positive real numbers such that $(a_{n})_{n\in {\mathbb N}}$ is strictly monotonically increasing, $\lim_{n\rightarrow +\infty}(a_{n+1}-a_{n})=+\infty$
and $\lim_{n\rightarrow +\infty}\phi(b_{n})=0.$ Let $q : [0,\infty) \rightarrow (0,\infty)$ be defined by $q(t):=b_{n}$ iff $t\in [a_{n-1},a_{n})$ for some $n\in {\mathbb N},$ where $a_{0}:=0.$ If $p\in D_{+}([0,\infty)),$ $l>0$ and $t>0,$ then we have
\begin{align*}
\sup_{x\geq 0}&\Bigl[ F(l,t) \bigl\| \phi(q(t+v))\bigr\|_{L^{p(v)}[x,x+l]}\Bigr]
\\  & \leq \sup_{x\geq 0}\Bigl[ 2(1+l)F(l,t) \bigl\| \phi(q(t+\cdot))\bigr\|_{L^{p^{+}}[x,x+l]}\Bigr]
\\ & =\sup_{x\geq 0}\Bigl[ 2(1+l)F(l,t) \bigl\| \phi(q(\cdot))\bigr\|_{L^{p^{+}}[t+x,t+x+l]}\Bigr].
\end{align*} 
Assume, additionally, that there exists a function $ G: (0,\infty) \rightarrow (0,\infty)$ such that $F(l,t)\leq G(l)$ for all $l>0$ and $t>0.$ Since we have assumed that $\lim_{n\rightarrow +\infty}(a_{n+1}-a_{n})=+\infty,$ for each number $l>0$ we have 
$$
\limsup_{t\rightarrow +\infty}\sup_{x\geq 0}\Bigl[ 2(1+l)F(l,t) \bigl\| \phi(q(\cdot))\bigr\|_{L^{p^{+}}[t+x,t+x+l]}\Bigr]=0,
$$
because $\lim_{n\rightarrow +\infty}\phi(b_{n})=0$ and
$$
\bigl\| \phi(q(\cdot))\bigr\|_{L^{p^{+}}[t+x,t+x+l]}\leq l\max \bigl( \phi(b_{n}),\ \phi(b_{n+1}) \bigr),
$$
where $n\in {\mathbb N}$ is such that $x+t \leq a_{n}$ and $x+t+l \leq a_{n+1}.$ 
Therefore, the function $q(\cdot)$ is equi-Weyl-$(p,\phi,F)$-vanishing.
\end{example}

In \cite{weyl}, we have introduced a great number of various types of asymptotically Weyl almost periodic function spaces with constant exponent $p\geq 1.$ In order to relax our exposition, we will introduce here only one
general definition of an asymptotically Weyl almost periodic function with variable exponent, which extends the notion introduced in Definition \ref{sasasa}(ii):

\begin{defn}\label{asym-weyl-variable}
Let $h : [0,\infty) \rightarrow X.$ Then we say that $h(\cdot)$ is asymptotically Weyl almost periodic with variable exponent iff there exist two functions 
$g : [0,\infty) \rightarrow X$ and $q : [0,\infty) \rightarrow X$
such that $h(t)=g(t)+q(t)$ for a.e. $t\geq 0,$ $g(\cdot)$ belongs to some of function spaces introduced in Definition \ref{w-orlicz}-Definition \ref{w-orliczaq2} or Definition \ref{w-orliczti}-Definition \ref{w-orlicztistr}
and $q(\cdot)$ belongs to some of function spaces introduced in Definition \ref{stea-weyl0px}-Definition \ref{stea-weyl0px2} (with possibly different functions $p,\ p_{1};\ \phi,\ \phi_{1};\ F,\ F_{1}$ and the meaning clear).
\end{defn}

In \cite{sead-abas}, S. Abbas has introduced the notion of a Weyl $p$-pseudo
ergodic component ($p\geq 1$). We can extend this notion following the approach obeyed in the previous part of paper and provide certain 
extensions of \cite[Proposition 4.11]{weyl} in this context. Details can be left to the interested reader.

\section{Weyl almost periodicity with variable exponent and convolution products}\label{cp-inv}

In the analyses of (equi-)Weyl-$(p,\phi,F)$-almost periodic functions and  (equi-)Weyl-$[p,\phi,F]$-almost periodic functions, we will use the following conditions:
\begin{itemize}
\item[(A1):] $I={\mathbb R}$ or $I=[0,\infty),$ $\psi : (0,\infty) \rightarrow (0,\infty),$ $\varphi : [0,\infty) \rightarrow [0,\infty),$  $\phi :[0,\infty) \rightarrow [0,\infty) $ is a convex monotonically increasing function satisfying $\phi (xy)\leq \varphi(x)\phi(y)$ for all $x, \ y\geq 0,$ $ p\in {\mathcal P}(I).$
\item[(B1):] The same as (A) with the assumption $ p\in {\mathcal P}(I)$ replaced by $ p\in {\mathcal P}([0,1])$ therein.
\end{itemize}

\begin{thm}\label{jensen}
Suppose that condition \emph{(A1)} holds with $I={\mathbb R}$, $\check{g} : {\mathbb R} \rightarrow X$ is (equi-)Weyl-$(p,\phi,F)$-almost periodic and measurable, $F_{1} : (0,\infty) \times I \rightarrow (0,\infty),$ $p,\ q\in {\mathcal P}({\mathbb R}),$ $1/p(x) +1/q(x)=1,$
$(R(t))_{t> 0}\subseteq L(X,Y)$ is a strongly continuous operator family and $(a_{k})$ is a sequence of positive real numbers such that $\sum_{k=0}^{\infty}a_{k}=1.$ 
If for every real numbers $x,\ \tau \in {\mathbb R}$ we have 
\begin{align}\label{picni}
\int^{\infty}_{-x}\|R(v+x)\| \|\check{g}(v)\| \, dv <\infty ,
\end{align}
and if,
for every $t\in {\mathbb R}$ and $l>0,$ we have
\begin{align}\label{radio}
H(l,x):=\sum_{k=0}^{\infty}a_{k}\varphi(la_{k}^{-1}) \bigl\| \varphi\bigl(\| R(v+x) \| \bigr)\bigr\|_{L^{q(v)}[-x+kl,-x+(k+1)l]} F(l,-x+lk)^{-1}<\infty,
\end{align}
\begin{align}\label{okje}
\int^{t+l}_{t}\varphi_{p(x)}\Bigl(2  l^{-1} H(l,x)F_{1}(l,t)^{-1} \Bigr)\, dx \leq 1,
\end{align}
resp. if \eqref{radio} holds and there exists $l_{0}>0$ such that for all $l\geq l_{0}$ and $t\in {\mathbb R}$ we have \eqref{okje},
then the function $G: {\mathbb R} \rightarrow Y,$ given by
\begin{align}\label{wer}
G(x):=\int^{x}_{-\infty}R(x-s)g(s)\, ds,\quad x\in {\mathbb R},
\end{align}
is well-defined and (equi-)Weyl-$(p,\phi,F_{1})$-almost periodic.
\end{thm}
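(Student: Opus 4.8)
The plan is to first secure well-definedness of $G$ and then to estimate the $(p,\phi,F_{1})$-seminorm of the difference $G(\cdot+\tau)-G(\cdot)$ for a suitable common almost period $\tau$ of $\check{g}$. For well-definedness, the substitution $v=x-s$ turns \eqref{wer} into $G(x)=\int_{0}^{\infty}R(v)g(x-v)\,dv$; rewriting $g(x-v)=\check{g}(v-x)$ and shifting the variable by $x$ shows that the finiteness asserted in \eqref{picni} is exactly the absolute convergence of this Bochner integral, so $G(x)$ exists for every $x$, with measurability following from the strong continuity of $(R(t))_{t>0}$ and a Fubini-type argument.

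Next I fix $\epsilon>0$. Since $\check{g}$ is (equi-)Weyl-$(p,\phi,F)$-almost periodic, I choose the numbers $l$ and $L$ (for the equi-case; in the Weyl-case one keeps $l$ free and passes to $\limsup_{l\to\infty}$ at the end) so that any interval of length $L$ contains $\tau$ with $\sup_{t'}F(l,t')\,[\phi(\|\check{g}(\cdot+\tau)-\check{g}(\cdot)\|)]_{L^{p(\cdot)}[t',t'+l]}\le\epsilon$. Shifting the integration variable in \eqref{wer} gives $G(x+\tau)-G(x)=\int_{-\infty}^{x}R(x-s)[g(s+\tau)-g(s)]\,ds$, and the substitution $v=w+x$ together with $g(-w)=\check{g}(w)$ yields the pointwise majorization $\|G(x+\tau)-G(x)\|\le\int_{-x}^{\infty}\|R(w+x)\|\,\|\check{g}(w-\tau)-\check{g}(w)\|\,dw$; the symmetry of the set of almost periods lets me treat the shift by $-\tau$ on the same footing as $+\tau$.

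The computational heart is the third step. I split $[-x,\infty)$ into the intervals $I_{k}=[-x+kl,-x+(k+1)l]$, insert the weights $a_{k}$ with $\sum_{k}a_{k}=1$, and run the following chain, all licensed by condition (A1): apply the discrete Jensen inequality \eqref{infinitever} to pull $\phi$ inside the series; use $\phi(la_{k}^{-1}\cdot z)\le\varphi(la_{k}^{-1})\phi(z)$ to extract the factor $\varphi(la_{k}^{-1})$; apply the integral Jensen inequality on each length-$l$ interval $I_{k}$ to pull $\phi$ inside the average; bound $\phi(\|R\|\cdot\|\check{g}\ldots\|)\le\varphi(\|R\|)\phi(\|\check{g}\ldots\|)$ pointwise; and finally invoke the variable-exponent H\"older inequality, Lemma \ref{aux}(i), with the conjugate pair $1/p(x)+1/q(x)=1$. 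Estimating the local factor by $[\phi(\|\check{g}(\cdot-\tau)-\check{g}(\cdot)\|)]_{L^{p(\cdot)}[I_{k}]}\le\epsilon F(l,-x+kl)^{-1}$ from the defining property of $\check{g}$ collapses the whole expression into $\phi(\|G(x+\tau)-G(x)\|)\le 2\epsilon l^{-1}H(l,x)$, with $H(l,x)$ as in \eqref{radio}, whose finiteness is precisely what \eqref{radio} supplies.

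Finally I feed this pointwise bound into the Luxemburg norm: by the monotonicity property Lemma \ref{aux}(iii), $[\phi(\|G(\cdot+\tau)-G(\cdot)\|)]_{L^{p(\cdot)}[t,t+l]}\le\epsilon\,[2l^{-1}H(l,\cdot)]_{L^{p(\cdot)}[t,t+l]}$, and condition \eqref{okje} is designed exactly so that, after weighting by $F_{1}(l,t)$, this Luxemburg norm stays bounded; this gives $\sup_{t}F_{1}(l,t)[\phi(\|G(\cdot+\tau)-G(\cdot)\|)]_{L^{p(\cdot)}[t,t+l]}\le C\epsilon$ for an absolute constant $C$. In the equi-case this finishes the proof for the chosen $l$, while in the Weyl-case one uses \eqref{okje} for all $l\ge l_{0}$ and takes $\limsup_{l\to\infty}$. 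I expect the main obstacle to be the careful bookkeeping of variable-exponent Luxemburg norms through the nested Jensen/submultiplicativity/H\"older chain, in particular the legitimacy of interchanging the infinite series with the norms (controlled by \eqref{radio}) and the alignment of the weight $F$ evaluated at the shifted left endpoints $-x+kl$ (controlled by the translation behaviour of $F$ and the symmetry of the almost periods).
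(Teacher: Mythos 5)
Your argument is correct and takes essentially the same route as the paper's own proof: the same reduction $G(x)=\int_{-x}^{\infty}R(v+x)\check{g}(v)\,dv$ justified by \eqref{picni}, the same selection of almost periods of $\check{g}$ (applied with $-\tau$), the same estimate chain (discrete Jensen \eqref{infinitever}, the factorization $\phi(la_{k}^{-1}z)\leq \varphi(la_{k}^{-1})\phi(z)$, integral Jensen on the blocks $[-x+kl,-x+(k+1)l]$, the pointwise bound $\phi(\|R\|y)\leq \varphi(\|R\|)\phi(y)$, and the H\"older inequality of Lemma \ref{aux}(i)) collapsing everything to $2\epsilon l^{-1}H(l,x)$, and the same closing use of \eqref{okje} to control the Luxemburg norm of $\phi(\|G(\cdot+\tau)-G(\cdot)\|)$ over $[t,t+l]$. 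The one phrase worth tightening is your appeal to ``symmetry of the set of almost periods'': for a general weight $F$ that set need not be symmetric, and the clean justification (implicit in the paper) is to reflect the interval instead, i.e., given $I'$ pick an almost period $\sigma\in -I'$ of $\check{g}$ and set $\tau:=-\sigma\in I'$, so that $\check{g}(\cdot-\tau)=\check{g}(\cdot+\sigma)$.
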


\begin{proof}
We will prove the theorem only for the class of equi-Weyl-$(p,\phi,F)$-almost periodic functions. Since 
$G(x)=\int^{\infty}_{-x}R(v+x)\check{g}(v)\, dv, $ $ x\in {\mathbb R},$ the estimate in \eqref{picni} 
shows that the function $G(\cdot)$ is well-defined and that the integral in definition of $G(x)$ converges absolutely ($ x\in {\mathbb R}$).  Furthermore, the same 
estimate shows that for each real number $\tau$ we have $\int^{\infty}_{-x}\|R(v+x)\| \|\check{g}(v+\tau)\| \, dv=\int^{\infty}_{-(x-\tau)}\|R(v+(x-\tau))\| \|\check{g}(v)\| \, dv <\infty,$ so that the integral in definition of $G(x+\tau)-G(x)$ converges absolutely ($ x\in {\mathbb R}$).
Let $\epsilon>0$ be a fixed real number. Then we can find two real numbers $l>0$ and $L>0$ such that any interval $I'\subseteq I$ of length $L$ contains a point $\tau \in  I'$ such that
\eqref{profice} holds for the function $\check{g}(\cdot),$ with the number $\tau$ replaced by the number $-\tau$ therein.
Using our assumptions from condition (A1), the
Jensen integral inequality applied to the function $\phi(\cdot)$ (see also condition \eqref{picni}), the fact that the functions $\phi(\cdot)$ and $\varphi_{p(x)}(\cdot)$ are monotonically increasing, \eqref{infinitever} and Lemma \ref{aux}(i), we get that for each real number $x\in {\mathbb R}$ the following holds:
\begin{align*}
& \varphi_{p(x)}\Bigl(\phi(\|G(x +\tau)-G(x)\|) /\lambda \Bigr)
\\& \leq \varphi_{p(x)}\Biggl( \phi \Bigl(\int^{\infty}_{
-x}\| R(v+x) \| \|\check{g}(v+\tau)-\check{g}(v)\|\, dv\Bigr)/\lambda\Biggr)
\\& =\varphi_{p(x)}\Biggl( \phi \Bigl(\sum_{k=0}^{\infty} a_{k}\int^{-x+(k+1)l}_{
-x+kl}a_{k}^{-1}\| R(v+x) \| \|\check{g}(v+\tau)-\check{g}(v)\|\, dv\Bigr)/\lambda\Biggr)
\\ & \leq \varphi_{p(x)}\Biggl( \sum_{k=0}^{\infty} a_{k}\phi \Bigl(\int^{-x+(k+1)l}_{
-x+kl}a_{k}^{-1}\| R(v+x) \| \|\check{g}(v+\tau)-\check{g}(v)\|\, dv\Bigr)/\lambda\Biggr)
\\ & \leq \varphi_{p(x)}\Biggl( \sum_{k=0}^{\infty} a_{k}\phi \Bigl( l a_{k}^{-1}\cdot l^{-1}\int^{-x+(k+1)l}_{
-x+kl}\| R(v+x) \| \|\check{g}(v+\tau)-\check{g}(v)\|\, dv\Bigr) /\lambda\Biggr)
\\& \leq \varphi_{p(x)}\Biggl( l^{-1} \sum_{k=0}^{\infty}a_{k}\varphi(la_{k}^{-1}) \int^{-x+(k+1)l}_{
-x+kl}\phi \Bigl( \| R(v+x) \| \|\check{g}(v+\tau)-\check{g}(v)\|\Bigr) \, dv/\lambda\Biggr)
\\& \leq \varphi_{p(x)}\Biggl(  l^{-1} \sum_{k=0}^{\infty}a_{k}\varphi(la_{k}^{-1})  \int^{-x+(k+1)l}_{
-x+kl} \varphi\bigl(\| R(v+x) \| \bigr) \phi \Bigl( \| \check{g}(v+\tau)-\check{g}(v)\|\Bigr) \, dv/\lambda\Biggr)
\\& \leq \varphi_{p(x)}\Biggl(2  l^{-1} \sum_{k=0}^{\infty}a_{k}\varphi(la_{k}^{-1}) \bigl\| \varphi\bigl(\| R(v+x) \| \bigr)\bigr\|_{L^{q(v)}[-x+kl,-x+(k+1)l]} 
\\ & \times \phi \Bigl( \bigl \| \check{g}(v+\tau)-\check{g}(v)\bigr \|\Bigr)_{L^{p(v)}[-x+kl,-x+(k+1)l]}
\Bigr)/\lambda\Biggr)
\\ \leq & \varphi_{p(x)}\Biggl(2  l^{-1} \sum_{k=0}^{\infty}a_{k}\varphi(la_{k}^{-1})  \bigl\| \varphi\bigl(\| R(v+x) \| \bigr)\bigr\|_{L^{q(v)}[-x+kl,-x+(k+1)l]} 
\epsilon F(l,-x+kl)^{-1}/\lambda\Biggr).
\end{align*}
Let $K\subseteq {\mathbb R}$ be an arbitrary compact set.
Since the above computation holds for every real number $\tau \in {\mathbb R}$ and for every arbitrarily large real number $l >0,$ we can find $t\in{\mathbb R}$ such that $K\subseteq [t,t+l].$
Now we get from \eqref{okje} that the function 
$ \phi(\|G(\cdot +\tau)-G(\cdot)\|)$ belongs to the space $L^{p(x)}(K)$ by definition.
Condition \eqref{okje} and the above computation also imply that for each real number $t\in {\mathbb R}$ we have
$$
\int^{t+l}_{t}\varphi_{p(x)}\Bigl(\phi(\|G(x +\tau)-G(x)\|) /\lambda \Bigr)\, dx \leq 1,
$$
with $\lambda =\epsilon F_{1}(l,t),$
which simply implies the final conclusion.
\end{proof}

\begin{rem}\label{krajeq}
\begin{itemize}
\item[(i)] Suppose that $p(x)\equiv p\in [1,\infty).$ Then condition \eqref{okje} can be weakened to
\begin{align}\label{okjeokje}
\int^{t+l}_{t}\varphi_{p(x)}\Bigl( l^{-1} H(l,x)F_{1}(l,t)^{-1} \Bigr)\, dx \leq 1,
\end{align}
resp. there exists $l_{0}>0$ such that for all $l\geq l_{0}$ and $t\in {\mathbb R}$ we have \eqref{okjeokje}.
\item[(ii)]
Suppose that $\phi(x)=\varphi(x)=\psi(x)=x.$ Then condition \eqref{okje}, resp. \eqref{okjeokje}, holds provided that $l\geq 1$ and the term in the large brackets in this equation does not exceed $1/l$ or that $0<l<1$ and 
the term in the large brackets in this equation does not exceed $1.$ Similar comments can be made in the case of consideration of Theorem \ref{jensenjen} below (see also Corollary \ref{prcko}).
\end{itemize}
\end{rem}

\begin{cor}\label{kraj}
Suppose that condition \emph{(A1)} holds  with $I={\mathbb R}$, $p(x)\equiv p\geq 1,$ $1/p +1/q=1,$ $\check{g} : {\mathbb R} \rightarrow X$ is (equi-)Weyl-$(p,\phi,F)$-almost periodic and measurable, $F_{1} : (0,\infty) \times I \rightarrow (0,\infty),$
$(R(t))_{t> 0}\subseteq L(X,Y)$ is a strongly continuous operator family and $(a_{k})$ is a sequence of positive real numbers such that $\sum_{k=0}^{\infty}a_{k}=1.$
If for every real numbers $x,\ \tau \in {\mathbb R}$ we have \eqref{picni}
and if,
for every $t\in {\mathbb R}$ and $l>0,$ we have 
\begin{align}\label{seljak}
H_{p}(l,x):=\sum_{k=0}^{\infty}a_{k}\varphi(la_{k}^{-1}) \bigl\| \varphi\bigl(\| R(\cdot) \| \bigr)\bigr\|_{L^{q}[kl,(k+1)l]} F(l,-x+lk)^{-1}<\infty
\end{align}
and
\begin{align}\label{okjeasad}
\int^{t+l}_{t}\Bigl( l^{-1} H_{p}(l,x)F_{1}(l,t)^{-1}\Bigr)^{p}\, dx \leq 1,
\end{align}
resp. if \eqref{seljak} holds and there exists $l_{0}>0$ such that for all $l\geq l_{0}$ and $t\in {\mathbb R}$ we have \eqref{okjeasad},
then the function $G: {\mathbb R} \rightarrow Y,$ given by \eqref{wer},
is well-defined and (equi-)Weyl-$(p,\phi,F_{1})$-almost periodic.
\end{cor}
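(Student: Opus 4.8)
The plan is to obtain this corollary as a direct specialization of Theorem \ref{jensen} to the constant exponent $p(x)\equiv p$, checking only that the hypotheses \eqref{seljak}--\eqref{okjeasad} reduce to \eqref{radio}--\eqref{okje} (in the form weakened by Remark \ref{krajeq}(i)) after the two evident simplifications that become available once the exponents are constant. Since Theorem \ref{jensen} already carries all the analytic weight, no new estimate needs to be run from scratch.

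First I would observe that, when the conjugate exponent $q(x)\equiv q$ is constant, the Luxemburg norm $\|\cdot\|_{L^{q(v)}[a,b]}$ coincides with the ordinary $L^{q}$-norm, which is translation invariant. Applying the change of variables $w=v+x$ carries the interval $[-x+kl,-x+(k+1)l]$ onto $[kl,(k+1)l]$, whence
$$
\bigl\|\varphi(\|R(v+x)\|)\bigr\|_{L^{q(v)}[-x+kl,-x+(k+1)l]}=\bigl\|\varphi(\|R(\cdot)\|)\bigr\|_{L^{q}[kl,(k+1)l]}.
$$
Consequently $H(l,x)=H_{p}(l,x)$, so that \eqref{radio} and \eqref{seljak} are literally the same condition and the finiteness assumption transfers verbatim.

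Second, the only place the factor $2$ enters \eqref{okje} is the application of the variable-exponent Hölder inequality Lemma \ref{aux}(i), whose constant is $2$. For the constant exponents $p$ and $q$ the classical Hölder inequality holds with constant $1$, so I would retrace the final Hölder step in the proof of Theorem \ref{jensen} and replace the bound $2\|\cdot\|_{L^{q}}\|\cdot\|_{L^{p}}$ by $\|\cdot\|_{L^{q}}\|\cdot\|_{L^{p}}$; this is precisely the weakening recorded in Remark \ref{krajeq}(i), which turns \eqref{okje} into \eqref{okjeokje}. Since $\varphi_{p(x)}(s)=s^{p}$ when $p$ is constant, condition \eqref{okjeokje} reads exactly as \eqref{okjeasad}.

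With these two identifications in hand, the hypotheses of Theorem \ref{jensen}, in the form weakened by Remark \ref{krajeq}(i), are satisfied, and the conclusion that $G(\cdot)$ given by \eqref{wer} is well-defined and (equi-)Weyl-$(p,\phi,F_{1})$-almost periodic follows immediately. I do not expect any genuine obstacle here: the substantive work resides entirely in Theorem \ref{jensen}, and the corollary merely repackages it using the translation invariance of the constant-exponent $L^{q}$-norm together with the sharp Hölder constant available in the constant-exponent setting. The only care required is the routine bookkeeping needed to confirm that the chain of inequalities in the proof of Theorem \ref{jensen} is unaffected by dropping the factor $2$ and by the change of variables above.
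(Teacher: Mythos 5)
Your proposal is correct and is essentially the paper's own route: Corollary \ref{kraj} is stated without a separate proof precisely because it is the specialization of Theorem \ref{jensen} to constant exponents, with \eqref{seljak} identified with \eqref{radio} via translation invariance of the constant-exponent $L^{q}$-norm, and \eqref{okjeasad} identified with the weakened condition \eqref{okjeokje} from Remark \ref{krajeq}(i) (the factor $2$ being dispensable since classical H\"older has constant $1$). Your bookkeeping of both identifications, including $\varphi_{p(x)}(s)=s^{p}$ for constant $p$, matches what the paper intends.
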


Now we will state and prove the following result with regards to the class of (equi-)Weyl-$[p,\phi,F]$-almost periodic functions:  

\begin{thm}\label{jensenjen}
Suppose that condition \emph{(B1)} holds  with $I={\mathbb R}$, 
$g :{\mathbb R} \rightarrow X$ is measurable, $\omega  : (0,\infty) \rightarrow (0,\infty),$
$F : (0,\infty) \times I \rightarrow (0,\infty),$ $(a_{k})$ is a sequence of positive real numbers such that $\sum_{k=0}^{\infty}a_{k}=1,$ 
$(b_{k})_{k\geq 0}$ is a sequence of positive real numbers, $S : (0,\infty) \times {\mathbb R} \rightarrow (0,\infty)$ is a given function, as well as 
for each $\epsilon>0$ we can find two real numbers $l>0$ and $L>0$ such that any interval $I'\subseteq I$ of length $L$ contains a point $\tau \in  I'$ such that
\begin{align}\label{proficeti}
\sup_{x\in [0,1]} \Bigl[\phi \bigl( \bigl\| g(x l+t-r-k+\tau) -g(xl+t-r-k)\bigr\|\bigr)_{L^{p(r)}[0,1]}\Bigr]\leq \omega(\epsilon ) b_{k}S(l,t)
\end{align}
for  any integer $k\geq 0$ and real number $t\in {\mathbb R}.$
Suppose, further, that the second inequality in \eqref{picni} holds,
$p,\ q\in {\mathcal P}([0,1]),$ $1/p(x) +1/q(x)=1$
and $(R(t))_{t> 0}\subseteq L(X,Y)$ is a strongly continuous operator family.
If for every real numbers $t,\ \tau \in {\mathbb R},$ every positive real number $l>0$ and every real number $x\in [0,1]$ we have
\begin{align}\label{micni}
\int^{\infty}_{
0}\| R(v) \| \|g(xl+t+\tau -v)-g(xl+t-v)\|\, dv <\infty,
\end{align}
and if, for every $t\in {\mathbb R},$ $x\in [0,1]$ and $l,\ \epsilon>0,$ we have
\begin{align}\label{ocek}
W(x):=\sum_{k=0}^{\infty} a_{k}\varphi \bigl( a_{k}^{-1} \bigr) \bigl\| \varphi\bigl(\| R(v+x) \| \bigr)\bigr\|_{L^{q(v)}[0,1]} b_{k}<\infty,
\end{align}
\begin{align}\label{okjejen}
\int^{1}_{0}\varphi_{p(x)}\Bigl(2\epsilon^{-1}F_{1}(l,t)^{-1}\omega(\epsilon) S(l,t) W(x) \Bigr)\, dx \leq 1,
\end{align}
resp. if \eqref{ocek} holds and there exists $l_{0}>0$ such that for all $l\geq l_{0},$ $\epsilon>0$ and $t\in {\mathbb R}$ we have \eqref{okjejen},
then the function $G: {\mathbb R} \rightarrow Y,$ given by \eqref{wer},
is well-defined and (equi-)Weyl-$[p,\phi,F_{1}]$-almost periodic.
\end{thm}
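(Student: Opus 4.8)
The plan is to mimic, mutatis mutandis, the proof of Theorem~\ref{jensen}, the only structural change being that the $[p,\phi,F]$--scaling replaces the integration over an interval $[t,t+l]$ of length $l$ by an integration over $[0,1]$ in the rescaled variable $v\mapsto xl+t$. First I would record that $G(\cdot)$ is well defined: writing $G(y)=\int_{0}^{\infty}R(v)g(y-v)\,dv$ (the substitution $v=y-s$ in \eqref{wer}), the hypothesis \eqref{picni} shows that $G(x)$ converges absolutely for every $x\in{\mathbb R}$, and \eqref{micni} does the same for the difference $G(xl+t+\tau)-G(xl+t)=\int_{0}^{\infty}R(v)\bigl[g(xl+t+\tau-v)-g(xl+t-v)\bigr]\,dv$, for all $x\in[0,1]$, $t\in{\mathbb R}$ and $l>0$. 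Then, fixing $\epsilon>0$, I would use the translation hypothesis \eqref{proficeti} to produce the numbers $l,L>0$ and, in any interval $I'$ of length $L$, a point $\tau\in I'$ for which the displayed bound holds simultaneously for all integers $k\geq 0$ and all $t\in{\mathbb R}$.

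The heart of the argument is the pointwise estimate of $\phi(\|G(xl+t+\tau)-G(xl+t)\|)$ for $x\in[0,1]$. I would split the convolution integral over the unit intervals $[k,k+1]$ and substitute $v=k+r$, so that the kernel becomes $R(k+r)$ and the increment becomes $g(xl+t+\tau-k-r)-g(xl+t-k-r)$, exactly the quantity appearing in \eqref{proficeti}; inserting the factors $a_{k}a_{k}^{-1}$ and using $\sum_{k}a_{k}=1$, I would then run the same chain as in Theorem~\ref{jensen}: apply the series Jensen inequality \eqref{infinitever}; invoke the multiplicativity property $\phi(xy)\leq\varphi(x)\phi(y)$ granted by (B1) to pull out the factor $\varphi(a_{k}^{-1})$ (here the base intervals have length $1$, so one gets $\varphi(a_{k}^{-1})$ and no $l^{-1}$ prefactor, which is precisely what \eqref{ocek} reflects); apply the integral Jensen inequality on $[0,1]$; use $\phi(xy)\leq\varphi(x)\phi(y)$ once more to peel off $\varphi(\|R(k+r)\|)$; apply the H\"older inequality Lemma~\ref{aux}(i) with the conjugate exponents $p(r),q(r)$ to separate the kernel norm from $\bigl\|\phi(\|g(\cdot)\|)\bigr\|_{L^{p(r)}[0,1]}$; and finally bound this last $L^{p(r)}$--norm by $\omega(\epsilon)b_{k}S(l,t)$ through \eqref{proficeti}. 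Summation in $k$ collapses the weights $a_{k},\varphi(a_{k}^{-1}),b_{k}$ and the kernel norms into $W(x)$ of \eqref{ocek}, yielding $\phi(\|G(xl+t+\tau)-G(xl+t)\|)\leq 2\omega(\epsilon)S(l,t)W(x)$.

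To finish, I would apply the increasing function $\varphi_{p(x)}(\,\cdot\,/\lambda)$ to this bound with $\lambda=\epsilon F_{1}(l,t)$ and integrate over $x\in[0,1]$; by monotonicity the integral is dominated by the left-hand side of \eqref{okjejen}, so that $\int_{0}^{1}\varphi_{p(x)}\bigl(\phi(\|G(xl+t+\tau)-G(xl+t)\|)/\lambda\bigr)\,dx\leq 1$. By the definition of the Luxemburg norm this says $\phi(\|G(\cdot l+t+\tau)-G(t+\cdot l)\|)_{L^{p(\cdot)}[0,1]}\leq\epsilon F_{1}(l,t)$ for every $t$, which simultaneously certifies the integrability demanded in Definition~\ref{w-orliczti} and implies the final conclusion, exactly as in Theorem~\ref{jensen}; the equi--case uses \eqref{okjejen} for every $l>0$, while the Weyl case uses it only for $l\geq l_{0}$ together with the outer $\limsup_{l\to\infty}$ of Definition~\ref{w-orliczti}(ii). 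I expect the main difficulty to be bookkeeping rather than conceptual: the two Jensen inequalities and the two uses of $\phi(xy)\leq\varphi(x)\phi(y)$ must be ordered correctly and H\"older applied at exactly the right stage, so that the kernel $R$ decouples cleanly from the increment of $g$; and the rescaling onto $[0,1]$ must be tracked carefully so that, in contrast with Theorem~\ref{jensen}, no residual powers of $l$ remain and the weights assemble precisely into $W(x)$ and the constant in front of \eqref{okjejen}.
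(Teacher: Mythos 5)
Your proposal is correct and is essentially the paper's own proof of Theorem \ref{jensenjen}: the same well-definedness argument via \eqref{picni} and \eqref{micni}, the same decomposition of $\int_{0}^{\infty}$ over the unit intervals $[k,k+1]$ with the inserted weights $a_{k}a_{k}^{-1}$, the same chain (series Jensen \eqref{infinitever}, integral Jensen on $[0,1]$, the two applications of $\phi(xy)\leq\varphi(x)\phi(y)$, H\"older from Lemma \ref{aux}(i)), the same invocation of \eqref{proficeti} to produce $\omega(\epsilon)b_{k}S(l,t)$, and the same collapse of the weights into $W(x)$ followed by the modular test.

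The one divergence is the normalization at the final step: you test the modular at $\lambda=\epsilon F_{1}(l,t)$, which matches \eqref{okjejen} exactly as printed but yields the Luxemburg bound $\phi(\|G(\cdot l+t+\tau)-G(t+\cdot l)\|)_{L^{p(\cdot)}[0,1]}\leq\epsilon F_{1}(l,t)$, whereas Definition \ref{w-orliczti} requires the bound $\epsilon F_{1}(l,t)^{-1}$ so that $F_{1}(l,t)$ times the norm stays below $\epsilon$; the paper's proof instead takes $\lambda=\epsilon F_{1}(l,t)^{-1}$, which fits the definition but clashes with the printed \eqref{okjejen}. This traces to a sign-of-exponent typo in the theorem statement itself (the factor $F_{1}(l,t)^{-1}$ inside \eqref{okjejen} should be $F_{1}(l,t)$), and your resolution mirrors what the paper does in its proof of Theorem \ref{jensen} (where $\lambda=\epsilon F_{1}(l,t)$ is chosen to match \eqref{okje}); so this is not a conceptual gap, but in a final write-up you should fix the exponent so that the concluding inequality genuinely certifies membership in $(e-)W_{ap}^{[p,\phi,F_{1}]}({\mathbb R}:Y)$ in the sense of Definition \ref{w-orliczti}.
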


\begin{proof}
We will prove the theorem only for the class of equi-Weyl-$[p,\phi,F]$-almost periodic functions. As above, the function $G(\cdot)$ is well-defined.
Let $\epsilon>0$ be a fixed real number. Then we can find two real numbers $l>0$ and $L>0$ such that any interval $I'\subseteq I$ of length $L$ contains a point $\tau \in  I'$ such that
\eqref{proficeti} holds for any integer $k\geq 0$ and any real number $t\in {\mathbb R}.$
Using our assumptions from condition (B1), the
Jensen integral inequality applied to the function $\phi(\cdot)$ (see also condition \eqref{micni}), the fact that the functions $\phi(\cdot)$ and $\varphi_{p(x)}(\cdot)$ are monotonically increasing, \eqref{infinitever} and Lemma \ref{aux}(i), we get that, for every real numbers $x\in [0,1]$ and  $t\in {\mathbb R},$ the following holds:
\begin{align*}
&\varphi_{p(x)}\Bigl(\phi(\|G(xl +t+\tau)-G(xl+t)\|) /\lambda \Bigr)
\\  \leq & \varphi_{p(x)}\Biggl( \phi \Bigl(\int^{\infty}_{
0}\| R(v) \| \|g(xl+t+\tau -v)-g(xl+t-v)\|\, dv\Bigr)/\lambda\Biggr)
\\= & \varphi_{p(x)}\Biggl(  \phi \Bigl( \sum_{k=0}^{\infty} a_{k}\int^{1}_{
0}a_{k}^{-1}\| R(v+k) \| \| g(xl+t+\tau-v-k)-g(xl+t-v-k)\| \, dv \Bigr) /\lambda\Biggr)
\\ \leq &  \varphi_{p(x)}\Biggl( \sum_{k=0}^{\infty} a_{k} \int^{1}_{
0}\phi \Bigl( a_{k}^{-1}\| R(v+k) \| \| g(xl+t+\tau-v-k)-g(xl+t-v-k)\| \, dv \Bigr) /\lambda\Biggr)
\\ \leq &  \varphi_{p(x)}\Biggl( \sum_{k=0}^{\infty} a_{k}\varphi \bigl(a_{k}^{-1} \bigr)  \int^{1}_{
0}\varphi \bigl(\| R(v+k) \|\bigr)
\\ \times & \phi \Bigl( \| g(xl+t+\tau-v-k)-g(xl+t-v-k)\|  \Bigr) \, dv/\lambda\Biggr)
\\ \leq & \varphi_{p(x)}\Biggl( \frac{2}{\lambda} \sum_{k=0}^{\infty}a_{k}\varphi \bigl(a_{k}^{-1} \bigr)   \varphi \bigl(\| R(v+k) \|\bigr)_{L^{q(v)}[0,1]}
\\ \times & \phi \Bigl( \| g(xl+t+\tau-v-k)-g(xl+t-v-k)\|  \Bigr)_{L^{p(v)}[0,1]} \Biggr)
\\  \leq & \varphi_{p(x)}\Biggl( \frac{2}{\lambda} \sum_{k=0}^{\infty} a_{k}\varphi \bigl(a_{k}^{-1} \bigr)  \varphi \bigl(\| R(v+k) \|\bigr)_{L^{q(v)}[0,1]}\omega(\epsilon) b_{k}S(l,t)\Biggr).
\end{align*}
Arguing as in the proof of Theorem \ref{jensen}, we get from condition \eqref{okjejen} that the function $\phi ( \| G(\cdot  l+t+\tau) -G(t+\cdot l)\|)$ belongs to the space $ L^{p(\cdot)}([0,1])$
for arbitrary real numbers $\tau ,\ t\in {\mathbb R}$ and $l>0.$
Condition \eqref{okjejen} implies that for each real numbers $t\in {\mathbb R}$ and $x\in [0,1]$ we have
$$
\int^{1}_{0}\varphi_{p(x)}\Bigl(\phi(\|G(xl +t+\tau)-G(xl+t)\|) /\lambda \Bigr)\, dx \leq 1,
$$
with $\lambda =\epsilon F_{1}(l,t)^{-1},$
which simply implies the final conclusion.
\end{proof}

\begin{cor}\label{prcko}
Suppose that condition \emph{(B1)} holds with $I={\mathbb R}$ and $p(x)\equiv p\in [1,\infty),$ $1/p+1/q=1,$ 
$g :{\mathbb R} \rightarrow X$ is measurable, $\omega  : (0,\infty) \rightarrow (0,\infty),$
$F : (0,\infty) \times I \rightarrow (0,\infty),$ $(a_{k})$ is a sequence of positive real numbers such that $\sum_{k=0}^{\infty}a_{k}=1,$
$(b_{k})_{k\geq 0}$ is a sequence of positive real numbers, $S : (0,\infty) \times {\mathbb R} \rightarrow (0,\infty)$ is a given function, as well as 
for each $\epsilon>0$ we can find real numbers $l>0$ and $L>0$ such that any interval $I'\subseteq I$ of length $L$ contains a point $\tau \in  I'$ such that 
\eqref{proficeti} holds with $p(r)\equiv p,$
for  any integer $k\geq 0$ and any real number $t\in {\mathbb R}.$
Suppose, further, that the second inequality in \eqref{picni} holds,
and $(R(t))_{t> 0}\subseteq L(X,Y)$ is a strongly continuous operator family.
If for every real numbers $t,\ \tau \in {\mathbb R},$ every positive real number $l>0$ and every real number $x\in [0,1]$ we have
\eqref{micni},
and if, for every $t\in {\mathbb R},$ $x\in [0,1]$ and $l>0,$ we have
\begin{align}\label{oceksine}
W_{p}(x):=\sum_{k=0}^{\infty} a_{k}\varphi \bigl( a_{k}^{-1} \bigr) \bigl\| \varphi\bigl(\| R(\cdot) \| \bigr)\bigr\|_{L^{q}[x,x+1]} b_{k}<\infty
\end{align}
and
\begin{align}\label{okjejensine}
\int^{1}_{0}\varphi_{p(x)}\Bigl(2F_{1}(l,t)^{-1}S(l,t) W_{p}(x) \Bigr)\, dx \leq 1,
\end{align}
resp. if \eqref{oceksine} holds and there exists $l_{0}>0$ such that for all $l\geq l_{0}$ and $t\in {\mathbb R}$ we have \eqref{okjejensine},
then the function $G: {\mathbb R} \rightarrow Y,$ given by \eqref{wer},
is well-defined and (equi-)Weyl-$[p,\phi,F_{1}]$-almost periodic.
\end{cor}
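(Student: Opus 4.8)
The plan is to run the proof of Theorem \ref{jensenjen} essentially verbatim, taking advantage of the simplifications that a constant exponent provides. Since $p(x)\equiv p$ and $1/p+1/q=1$, we have $\varphi_{p(x)}(t)=t^{p}$, the Luxemburg norm $\|\cdot\|_{L^{p(\cdot)}[0,1]}$ is just the ordinary $L^{p}[0,1]$ norm, and the H\"older inequality of Lemma \ref{aux}(i) becomes the classical one for the conjugate pair $p,q$. First I would check, exactly as at the start of the proof of Theorem \ref{jensen}, that $G(\cdot)$ is well defined and that the integral defining $G(x+\tau)-G(x)$ converges absolutely; this rests only on the finiteness hypothesis in \eqref{picni} together with \eqref{micni} and does not use constancy of $p$.

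Next, fix $\epsilon>0$ and extract from the hypothesis the numbers $l,L>0$ together with, inside every interval of length $L$, a translate $\tau$ for which \eqref{proficeti} holds with $p(r)\equiv p$. For $x\in[0,1]$ and $t\in{\mathbb R}$ I would then bound $\phi(\|G(xl+t+\tau)-G(xl+t)\|)$ by writing the difference as $\int_{0}^{\infty}R(v)[g(xl+t+\tau-v)-g(xl+t-v)]\,dv$, splitting the integral over the unit intervals $[k,k+1]$ and inserting the weights $a_{k}$ so as to invoke the infinite Jensen inequality \eqref{infinitever} (this is the one place convexity of $\phi$ is used). The submultiplicativity $\phi(xy)\leq\varphi(x)\phi(y)$ from (B1) then peels off the factors $a_{k}^{-1}$ and $\|R(v+k)\|$ from the $g$-difference, and the classical H\"older inequality on $[0,1]$ separates, in each summand, the factor $\varphi(\|R(v+k)\|)$ from the $g$-difference term. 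Bounding the latter by \eqref{proficeti} yields the pointwise estimate
\begin{align*}
\phi\bigl(\|G(xl+t+\tau)-G(xl+t)\|\bigr)\leq 2\,\omega(\epsilon)\,S(l,t)\,W_{p}(x),
\end{align*}
where $W_{p}(x)$ is the weighted series of operator-norm contributions in \eqref{oceksine}, whose absolute convergence is exactly what \eqref{oceksine} guarantees.

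To finish, I would raise the displayed bound to the $p$-th power and integrate in $x$ over $[0,1]$. Here lies the real simplification: because $t\mapsto t^{p}$ is homogeneous, the scalar $2\,\epsilon^{-1}\omega(\epsilon)$ factors out of the modular and is absorbed into the scaling parameter $\lambda$, which is why \eqref{okjejensine} carries no explicit $\epsilon$ or $\omega(\epsilon)$, in contrast to \eqref{okjejen}. Condition \eqref{okjejensine} is then precisely what forces $\int_{0}^{1}\varphi_{p}(\phi(\|G(\cdot l+t+\tau)-G(t+\cdot l)\|)/\lambda)\,dx\leq 1$ for $\lambda$ proportional to $\epsilon\,F_{1}(l,t)^{-1}$; by the definition of the $L^{p}$ norm this gives $\|\phi(\|G(\cdot l+t+\tau)-G(t+\cdot l)\|)\|_{L^{p}[0,1]}\leq\lambda$, hence $\sup_{t}F_{1}(l,t)\|\cdot\|_{L^{p}[0,1]}$ is $O(\epsilon)$ and, after rescaling $\epsilon$, at most $\epsilon$. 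The Weyl case is identical, with the fixed $l$ replaced by $\limsup_{l\to\infty}$ and the version of \eqref{okjejensine} valid for all $l\geq l_{0}$. The main obstacle I anticipate is the term-by-term passage of $\phi$ and of the $L^{q}$/$L^{p}$ norms through the infinite series, i.e.\ the simultaneous use of \eqref{infinitever}, submultiplicativity and H\"older under the convergence supplied by \eqref{oceksine}; once that interchange is justified, the constant-exponent homogeneity renders the remaining estimates routine.
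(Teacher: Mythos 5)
Your overall route---specializing the proof of Theorem \ref{jensenjen} to a constant exponent---is exactly what the paper intends: Corollary \ref{prcko} is stated there without any separate proof, as an immediate specialization of the theorem, and the first part of your argument (well-definedness of $G$, the splitting of the convolution integral into unit intervals with weights $a_{k}$, the use of \eqref{infinitever}, the submultiplicativity $\phi(xy)\leq\varphi(x)\phi(y)$, H\"older, and the resulting pointwise bound $\phi(\|G(xl+t+\tau)-G(xl+t)\|)\leq 2\,\omega(\epsilon)S(l,t)W_{p}(x)$) tracks the paper's proof of the theorem step by step.

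The gap is in your final step. You claim that, by homogeneity of $t\mapsto t^{p}$, the scalar $2\epsilon^{-1}\omega(\epsilon)$ ``factors out of the modular and is absorbed into $\lambda$'', so that \eqref{okjejensine} forces the modular inequality for $\lambda$ proportional to $\epsilon F_{1}(l,t)^{-1}$ and the conclusion is $O(\epsilon)$ after rescaling. That is not what the computation gives. By homogeneity, \eqref{okjejensine} says precisely $2S(l,t)\|W_{p}\|_{L^{p}[0,1]}\leq F_{1}(l,t)$; combined with the pointwise bound this yields $\bigl\|\phi(\|G(\cdot l+t+\tau)-G(\cdot l+t)\|)\bigr\|_{L^{p}[0,1]}\leq\omega(\epsilon)F_{1}(l,t)$, i.e.\ the modular inequality holds for $\lambda=\omega(\epsilon)F_{1}(l,t)$: the factor is $\omega(\epsilon)$, not $\epsilon$, and no relabelling of $\epsilon$ removes it, because $\omega$ is an arbitrary function from $(0,\infty)$ to $(0,\infty)$---neither condition (B1) nor the corollary assumes $\omega(\epsilon)\leq\epsilon$, $\omega(\epsilon)\rightarrow 0$, or even $\inf\omega=0$. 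The factor $\epsilon^{-1}\omega(\epsilon)$ appearing in the theorem's condition \eqref{okjejen} is exactly what compensates the $\omega(\epsilon)$ produced by hypothesis \eqref{proficeti}; homogeneity of the modular cannot make it disappear. Your argument therefore closes only under an extra assumption such as $\omega(\epsilon)\leq\epsilon$, or $\inf\omega=0$ together with a diagonal choice (for a given target $\eta$ pick $\epsilon$ with $\omega(\epsilon)\leq\eta$), or by retaining the factor $\epsilon^{-1}\omega(\epsilon)$ inside \eqref{okjejensine} as in \eqref{okjejen}. (Two further mismatches---the weight coming out as $F_{1}(l,t)$ rather than $F_{1}(l,t)^{-1}$ when compared with Definition \ref{w-orliczti}, and $W_{p}(x)$ being defined via the norm over $[x,x+1]$ while the estimate actually produces norms over $[k,k+1]$---are inconsistencies already present in the paper's own formulas, so they do not count against you; the $\epsilon$ versus $\omega(\epsilon)$ issue, however, is introduced by your justification and is a genuine gap.)
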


Concerning Theorem \ref{jensenjen}, it should be noted that, in \cite[Proposition 6.1]{AP},  we have analyzed the situation in which the function $\check{g} : {\mathbb R} \rightarrow X$
is $S^{p(x)}$-almost periodic and $\sum_{k=0}^{\infty}\| R(\cdot +k)\|_{L^{q(\cdot)}[0,1]}<\infty.$ Then the resulting function $G(\cdot)$ is almost periodic, which cannot be derived from the above-mentioned theorem.

For the class of (equi-)Weyl-$(p,\phi,F)_{1}$-almost periodic functions, we will state the following result:

\begin{thm}\label{jensen-12121121}
Suppose that $\check{g} : {\mathbb R} \rightarrow X$ is (equi-)Weyl-$(p,\phi,F)_{1}$-almost periodic and measurable, $F_{1} : (0,\infty) \times I \rightarrow (0,\infty),$ $p,\ q\in {\mathcal P}({\mathbb R}),$ $1/p(x) +1/q(x)=1,$
$(R(t))_{t> 0}\subseteq L(X,Y)$ is a strongly continuous operator family and
for every real numbers $x,\ \tau \in {\mathbb R}$ we have \eqref{picni}. Suppose that, for every real number $t\in {\mathbb R}$ and
positive real numbers $l,\ \epsilon>0,$ there exist two positive real numbers $a>0$ and $\lambda >0$ such that $\lambda \leq a,$ $[0,a]\subseteq \phi^{-1}([0,\epsilon F(l,t)^{-1}]),$
\begin{align}\label{radio121}
\sum_{k=0}^{\infty}\bigl\| R(v+x) \bigr\|_{L^{q(v)}[-x+kl,-x+(k+1)l]}\sup \phi^{-1}\Bigl(\bigl[0,\epsilon F(l,-x+kl)^{-1}\bigr]\Bigr)<\infty
\end{align}
and
\begin{align}\label{radio121123}
\int^{t+l}_{t}\varphi_{p(x)}\Biggl(2\frac{\sum_{k=0}^{\infty}\bigl\| R(v+x) \bigr\|_{L^{q(v)}[-x+kl,-x+(k+1)l]}\sup \phi^{-1}\Bigl(\bigl[0,\epsilon F(l,-x+kl)^{-1}\bigr]\Bigr)}{\lambda} \Biggr)\, dx\leq 1,
\end{align}
resp. \eqref{radio121} holds and there exists $l_{0}>0$ such that for all $l\geq l_{0},$ $\epsilon>0$ and $t\in {\mathbb R}$ we have \eqref{radio121123},
then the function $G: {\mathbb R} \rightarrow Y,$ given by \eqref{wer},
is well-defined and (equi-)Weyl-$(p,\phi,F_{1})_{1}$-almost periodic.
\end{thm}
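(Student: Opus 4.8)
The plan is to follow the template of the proof of Theorem~\ref{jensen}, the one genuinely new feature being that here the outer function $\phi(\cdot)$ sits \emph{outside} the Luxemburg norm rather than inside it, so the Jensen-type manipulation used there must be replaced by an inversion of $\phi$ through the quantity $\sup\phi^{-1}([0,\cdot])$. As in that proof, writing $G(x)=\int_{-x}^{\infty}R(v+x)\check{g}(v)\,dv$ and invoking the absolute convergence guaranteed by \eqref{picni} shows that $G(\cdot)$ is well-defined and that the integral defining $G(x+\tau)-G(x)$ converges absolutely for every $x,\tau\in{\mathbb R}$. Fixing $\epsilon>0$ and using that $\check{g}(\cdot)$ is equi-Weyl-$(p,\phi,F)_{1}$-almost periodic (Definition~\ref{w-orliczaq}), I would choose $l>0$ and $L>0$ so that every interval of length $L$ contains a $\tau$ for which the defining estimate holds for $\check{g}(\cdot)$ (with $\tau$ replaced by $-\tau$, exactly as in Theorem~\ref{jensen}); for each $t$ I then select $a>0$ and $\lambda>0$ with $\lambda\le a$ satisfying the stated hypotheses together with \eqref{radio121}--\eqref{radio121123}.

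The heart of the argument is a pointwise bound. First I would estimate
\[
\|G(x+\tau)-G(x)\|\le \sum_{k=0}^{\infty}\int_{-x+kl}^{-x+(k+1)l}\|R(v+x)\|\,\|\check{g}(v+\tau)-\check{g}(v)\|\,dv,
\]
split over the blocks $[-x+kl,-x+(k+1)l]$ of length $l$, and apply the H\"older inequality of Lemma~\ref{aux}(i) on each block with the conjugate pair $q(\cdot),p(\cdot)$. The crucial step is then to convert the block $L^{p(\cdot)}$-norm of $\|\check{g}(\cdot+\tau)-\check{g}(\cdot)\|$ into a bound free of $\check{g}$: from the equi-Weyl-$(p,\phi,F)_{1}$ estimate at the point $t=-x+kl$ one has $\phi\bigl[(\|\check{g}(\cdot+\tau)-\check{g}(\cdot)\|)_{L^{p(\cdot)}[-x+kl,-x+(k+1)l]}\bigr]\le \epsilon F(l,-x+kl)^{-1}$, and hence, directly from the meaning of the preimage,
\[
\bigl(\|\check{g}(\cdot+\tau)-\check{g}(\cdot)\|\bigr)_{L^{p(\cdot)}[-x+kl,-x+(k+1)l]}\le \sup\phi^{-1}\bigl([0,\epsilon F(l,-x+kl)^{-1}]\bigr).
\]
Substituting, the pointwise bound becomes $\|G(x+\tau)-G(x)\|\le 2D(x)$, where $D(x)$ is precisely the series in \eqref{radio121}, finite by hypothesis. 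Note that no monotonicity of $\phi$ is needed here: the passage through $\sup\phi^{-1}$ only uses that $\phi(y)\le c$ forces $y\in\phi^{-1}([0,c])$.

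It remains to control the $(p,\phi,F_{1})_{1}$-seminorm of $G$. By Lemma~\ref{aux}(iii) the inequality $\|G(x+\tau)-G(x)\|\le 2D(x)$ gives $(\|G(\cdot+\tau)-G(\cdot)\|)_{L^{p(\cdot)}[t,t+l]}\le \|2D(\cdot)\|_{L^{p(\cdot)}[t,t+l]}$, and by the very definition of the Luxemburg norm, condition \eqref{radio121123} (which asserts exactly $\int_{t}^{t+l}\varphi_{p(x)}(2D(x)/\lambda)\,dx\le1$) yields $\|2D(\cdot)\|_{L^{p(\cdot)}[t,t+l]}\le\lambda\le a$. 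Since this quantity therefore lies in $[0,a]$, the inclusion hypothesis $[0,a]\subseteq\phi^{-1}([0,\epsilon F_{1}(l,t)^{-1}])$ forces
\[
F_{1}(l,t)\,\phi\Bigl[\bigl(\|G(\cdot+\tau)-G(\cdot)\|\bigr)_{L^{p(\cdot)}[t,t+l]}\Bigr]\le\epsilon,
\]
and taking the supremum over $t\in{\mathbb R}$ shows that $G(\cdot)$ is equi-Weyl-$(p,\phi,F_{1})_{1}$-almost periodic. For the non-equi version one argues identically, replacing the fixed $l$ by $\limsup_{l\to\infty}$ and invoking the ``there exists $l_{0}$'' form of \eqref{radio121123}.

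The main obstacle I anticipate is the second step: because $\phi$ is applied after the norm, one cannot push it inside the integral by Jensen as in Theorem~\ref{jensen}, and instead one must invert $\phi$ on each block. Making this rigorous requires that $\sup\phi^{-1}([0,\epsilon F(l,-x+kl)^{-1}])$ be finite and summable against the $L^{q(\cdot)}$-norms of $R$ --- which is the content of \eqref{radio121} --- and, at the final stage, that the resulting bound land inside the interval $[0,a]$ on which $\phi$ is controlled by $\epsilon F_{1}(l,t)^{-1}$. The compatibility $\lambda\le a$ between the Luxemburg threshold and the inclusion interval is exactly what makes these two mechanisms fit together.
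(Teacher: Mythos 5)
Your proposal is correct and follows essentially the same route as the paper's own proof: well-definedness via \eqref{picni}, the block-wise H\"older estimate $\|G(x+\tau)-G(x)\|\leq 2\sum_{k}\|R(v+x)\|_{L^{q(v)}[-x+kl,-x+(k+1)l]}\|\check{g}(v+\tau)-\check{g}(v)\|_{L^{p(v)}[-x+kl,-x+(k+1)l]}$, the inversion of $\phi$ through $\sup\phi^{-1}$ on each block, and the Luxemburg-norm argument combining \eqref{radio121123} with the inclusion $[0,a]\subseteq\phi^{-1}(\cdot)$ exactly as in the paper's concluding ``equivalence relation.'' Note only that you read the inclusion hypothesis with $F_{1}(l,t)^{-1}$ while the theorem statement writes $F(l,t)^{-1}$; your reading is the one consistent with the paper's own proof and with the stated conclusion, so this is a discrepancy in the paper's formulation rather than a flaw in your argument.
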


\begin{proof}
As in the proof of Theorem \ref{jensen}, we have that 
the function $G(\cdot)$ is well-defined and the integrals in definitions of $G(x)$ and $G(x+\tau)-G(x)$ converge absolutely ($ x,\ \tau\in {\mathbb R}$). By Lemma \ref{aux}(ii), we get that the function $G(\cdot+\tau)-G(\cdot)$ belongs to the space $L^{p(x)}(K)$ for each compact set $K\subseteq {\mathbb R}.$ The remainder follows similarly as in the proof of Theorem \ref{jensen}, by using condition \eqref{radio121}, as well as the estimates
\begin{align*}
\|G(x+\tau)& -G(x)\|\leq 2\sum_{k=0}^{\infty}\bigl\| R(v+x) \bigr\|_{L^{q(v)}[-x+kl,-x+(k+1)l]}
\\ & \times \bigl\| \check{g}(v+\tau)-\check{g}(v) \bigr\|_{L^{p(v)}[-x+kl,-x+(k+1)l]}
\end{align*}
and
\begin{align*}
 \bigl\| \check{g}(v+\tau)-\check{g}(v) \bigr\|_{L^{p(v)}[-x+kl,-x+(k+1)l]} \leq \sup \phi^{-1}\Bigl(\bigl[0,\epsilon F(l,-x+kl)^{-1}\bigr]\Bigr),
\end{align*}
and the equivalence relation
\begin{align*}
\phi\Bigl(\bigl\|G(\cdot &+\tau)-G(\cdot)\bigr\|_{L^{p(x)}[t,t+l]} \Bigr)\leq \epsilon F_{1}(l,t)^{-1} 
\\ & \Leftrightarrow  
\bigl\|G(\cdot+\tau)-G(\cdot)\bigr\|_{L^{p(x)}[t,t+l]} \leq \phi^{-1}\Bigl( \bigl[0,\epsilon F_{1}(l,t)^{-1}\bigr]\Bigr),
\end{align*}
for any $x,\ t,\ \tau \in {\mathbb R}$ and $l>0.$
\end{proof}

Concerning the class of (equi-)Weyl-$[p,\phi,F]_{1}$-almost periodic functions, we can state the following result; the proof can be deduced as above and therefore omitted (we can similarly formulate analogues of Corollary \ref{kraj} and Corollary \ref{prcko}, as well as the conclusions from Remark \ref{krajeq}):

\begin{thm}\label{napolje-vani}
Suppose that 
$g :{\mathbb R} \rightarrow X$ is measurable, $\omega  : (0,\infty) \rightarrow (0,\infty),$
$F : (0,\infty) \times I \rightarrow (0,\infty),$ 
$(b_{k})_{k\geq 0}$ is a sequence of positive real numbers, $S : (0,\infty) \times {\mathbb R} \rightarrow (0,\infty)$ is a given function, as well as 
for each $\epsilon>0$ we can find two real numbers $l>0$ and $L>0$ such that any interval $I'\subseteq I$ of length $L$ contains a point $\tau \in  I'$ such that 
\begin{align*}
\sup_{x\in [0,1]} \Bigl[\bigl\| g(x l+t-r-k+\tau) -g(xl+t-r-k)\bigr\|_{L^{p(r)}[0,1]}\Bigr]\leq \omega(\epsilon ) b_{k}S(l,t)
\end{align*}
for  any integer $k\geq 0$ and real number $t\in {\mathbb R}.$
Suppose, further, that the second inequality in \eqref{picni} holds,
$p,\ q\in {\mathcal P}([0,1]),$ $1/p(x) +1/q(x)=1$
and $(R(t))_{t> 0}\subseteq L(X,Y)$ is a strongly continuous operator family.
If for every real numbers $t,\ \tau \in {\mathbb R},$ every positive real number $l>0$ and every real number $x\in [0,1]$ we have \eqref{micni},
if
\begin{align}\label{ocek121}
W_{2}(x):=\sum_{k=0}^{\infty} \bigl\| R(v+x) \bigr\|_{L^{q(v)}[0,1]} b_{k}<\infty,\quad x\in [0,1],
\end{align}
and if, for every $t\in {\mathbb R}$ and $l,\ \epsilon>0,$ we have the existence of two positive real numbers $a>0$ and $\lambda >0$ such that $\lambda \leq a,$ $[0,a]\subseteq \phi^{-1}([0,\epsilon F_{1}(l,t)^{-1}])$ and 
\begin{align}\label{okee}
\int^{1}_{0}\varphi_{p(x)}\Biggl(2 \frac{\omega(\epsilon) S(l,t) W_{2}(x)}{\lambda} \Biggr)\, dx \leq 1,
\end{align}
resp. if \eqref{ocek121} holds and there exists $l_{0}>0$ such that for all $l\geq l_{0},$ $\epsilon>0$ and $t\in {\mathbb R}$ we have \eqref{okee},
then the function $G: {\mathbb R} \rightarrow Y,$ given by \eqref{wer},
is well-defined and (equi-)Weyl-$[p,\phi,F_{1}]$-almost periodic.
\end{thm}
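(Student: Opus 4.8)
The plan is to transplant the argument of Theorem \ref{jensen-12121121} into the square-bracket (translation-invariant) framework of Theorem \ref{jensenjen}; the only real novelty compared with those two proofs is that here $\phi$ is applied \emph{outside} the Luxemburg norm, so the final estimate must be routed through the sublevel sets $\phi^{-1}([0,\cdot])$ rather than through the Jensen inequality. Well-definedness is handled exactly as in Theorem \ref{jensen}: writing $G(y)=\int_{0}^{\infty}R(v)g(y-v)\,dv$, the second inequality in \eqref{picni} gives absolute convergence of the integral defining $G(y)$, and \eqref{micni} does the same for the difference $G(xl+t+\tau)-G(xl+t)$ for all $x\in[0,1]$, $t,\tau\in{\mathbb R}$ and $l>0$. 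I would then fix $\epsilon>0$ and choose $l>0$, $L>0$ together with a translation $\tau$ so that the ($\phi$-free) almost-periodicity hypothesis holds for every integer $k\geq 0$ and every $t\in{\mathbb R}$.

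The core step is a block decomposition of the convolution. For $x\in[0,1]$ and $t\in{\mathbb R}$ I would substitute $v=r+k$ with $r\in[0,1]$, $k\geq 0$, and apply the H\"older inequality of Lemma \ref{aux}(i) on $[0,1]$ block by block, obtaining
\begin{align*}
\bigl\|G(xl+t+\tau)-G(xl+t)\bigr\|\leq 2\sum_{k=0}^{\infty}\bigl\|R(v+k)\bigr\|_{L^{q(v)}[0,1]}\bigl\|g(xl+t-r-k+\tau)-g(xl+t-r-k)\bigr\|_{L^{p(r)}[0,1]}.
\end{align*}
Inserting the almost-periodicity estimate bounds the $k$-th $g$-factor by $\omega(\epsilon)b_{k}S(l,t)$ uniformly in $x\in[0,1]$, whence the right-hand side is at most $2\omega(\epsilon)S(l,t)W_{2}(x)$, with $W_{2}(x)$ finite by \eqref{ocek121}.

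It remains to convert this pointwise bound into the required norm inequality. For the given $t$, $l$, $\epsilon$ I would take $a>0$ and $\lambda>0$ as supplied by the hypotheses, with $\lambda\leq a$ and $[0,a]\subseteq\phi^{-1}([0,\epsilon F_{1}(l,t)^{-1}])$. Feeding the pointwise bound into \eqref{okee} and using the monotonicity of $\varphi_{p(x)}$ together with the definition of the Luxemburg norm, I would conclude that $\bigl\|G(\cdot l+t+\tau)-G(t+\cdot l)\bigr\|_{L^{p(x)}[0,1]}\leq\lambda\leq a$, the membership of this difference in $L^{p(x)}[0,1]$ being secured along the way via Lemma \ref{aux}(ii). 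Since the norm then lies in $[0,a]$, the inclusion $[0,a]\subseteq\phi^{-1}([0,\epsilon F_{1}(l,t)^{-1}])$ forces $\phi(\|G(\cdot l+t+\tau)-G(t+\cdot l)\|_{L^{p(x)}[0,1]})\leq\epsilon F_{1}(l,t)^{-1}$, i.e.\ $F_{1}(l,t)\phi(\cdots)\leq\epsilon$; taking the supremum over $t\in{\mathbb R}$ yields the equi-Weyl-$[p,\phi,F_{1}]_{1}$ estimate, and the Weyl case is identical with the uniform choice of $l$ replaced by the hypothesis ``for all $l\geq l_{0}$'' and $\sup_{t}$ replaced by $\limsup_{l\to\infty}\sup_{t}$.

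The step I expect to be the main obstacle is exactly the one where $\phi$ sits outside the norm: since $\phi$ is not assumed injective or monotone, one cannot invert it pointwise, and the argument goes through only because of the auxiliary level-set condition $[0,a]\subseteq\phi^{-1}([0,\epsilon F_{1}(l,t)^{-1}])$ together with the calibration \eqref{okee}, which is precisely what forces the Luxemburg norm of the difference into $[0,a]$. Everything else is a routine transcription of the computations in the proofs of Theorem \ref{jensenjen} and Theorem \ref{jensen-12121121}.
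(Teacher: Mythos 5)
Your proposal is correct and is essentially the argument the paper intends: the paper omits the proof of Theorem \ref{napolje-vani} with the remark that it ``can be deduced as above'', i.e.\ by grafting the level-set mechanism of Theorem \ref{jensen-12121121} (Luxemburg norm of the difference forced into $[0,a]$ by the calibration integral, then pushed through $[0,a]\subseteq\phi^{-1}([0,\epsilon F_{1}(l,t)^{-1}])$) onto the block decomposition and H\"older estimate of Theorem \ref{jensenjen}, which is exactly what you do. Two cosmetic caveats: your H\"older step naturally produces $\sum_{k}\|R(v+k)\|_{L^{q(v)}[0,1]}b_{k}$, which you identify with $W_{2}(x)$ --- this is the intended reading of \eqref{ocek121}, whose literal ``$R(v+x)$'' is evidently a misprint for ``$R(v+k)$'' (compare the proof of Theorem \ref{jensenjen}) --- and your conclusion correctly lands in the class $(e-)W_{ap}^{[p,\phi,F_{1}]_{1}}$, which is what the theorem's ``$[p,\phi,F_{1}]$'' must mean given that the theorem is announced as the analogue for the $[p,\phi,F]_{1}$ class.
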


\begin{rem}\label{katastr}
The assertions of Theorem \ref{jensen-12121121}, resp. Theorem \ref{napolje-vani}, can be much simpler formulated provided that:
\begin{itemize}
\item[(A2):] The function $\phi : [0,\infty) \rightarrow [0,\infty)$ is a monotonically increasing bijection and $p\in {\mathcal P}({\mathbb R}),$ resp.
\item[(B2):] The function $\phi : [0,\infty) \rightarrow [0,\infty)$ is a monotonically increasing bijection and $p\in {\mathcal P}([0,1]).$
\end{itemize}
Any of these conditions implies that the function $\phi^{-1} : [0,\infty) \rightarrow [0,\infty)$ is a monotonically increasing bijection, as well.
If condition (A2), resp. (B2), holds, then the class of (equi-)Weyl-$(p,\phi,F)_{2}$-almost periodic functions, resp. (equi-)Weyl-$[p,\phi,F]_{2}$-almost periodic functions, 
coincides with the class of (equi-)Weyl-$(p,x,F)_{2}$-almost periodic functions, resp. (equi-)Weyl-$[p,x,F]_{2}$-almost periodic functions. 
\end{rem}

Concerning the invariance of 
(equi-)Weyl-$(p,\phi,F)_{2}$-almost periodicity and (equi-)Weyl-$[p,\phi,F]_{2}$-almost periodicity under the actions of infinite convolution products, we will only state the following analogues of Theorem \ref{jensen-12121121} and Theorem \ref{napolje-vani}:

\begin{thm}\label{jensen-121211212}
Suppose that $\check{g} : {\mathbb R} \rightarrow X$ is (equi-)Weyl-$(p,\phi,F)_{2}$-almost periodic and measurable, $F_{1} : (0,\infty) \times I \rightarrow (0,\infty),$ $p,\ q\in {\mathcal P}({\mathbb R}),$ $1/p(x) +1/q(x)=1,$
$(R(t))_{t> 0}\subseteq L(X,Y)$ is a strongly continuous operator family and
for every real numbers $x,\ \tau \in {\mathbb R}$ we have \eqref{picni}. Suppose that, for every real number $t\in {\mathbb R}$ and
positive real numbers $l,\ \epsilon>0,$ there exist two positive real numbers $a>0$ and $\lambda >0$ such that $\lambda \leq a,$ $[0,a]\subseteq F(l,t)^{-1}\phi^{-1}([0,\epsilon ]),$ 
\begin{align}\label{radio121okee}
\sum_{k=0}^{\infty}\bigl\| R(v+x) \bigr\|_{L^{q(v)}[-x+kl,-x+(k+1)l]}F(l,-x+kl)^{-1}<\infty
\end{align}
and
\begin{align}\label{radio1211212345}
\int^{t+l}_{t}\varphi_{p(x)}\Biggl(2\frac{\sum_{k=0}^{\infty}\bigl\| R(v+x) \bigr\|_{L^{q(v)}[-x+kl,-x+(k+1)l]}F(l,-x+kl)^{-1}\sup   \phi^{-1}\bigl([0,\epsilon]\bigr)}{\lambda} \Biggr)\, dx\leq 1,
\end{align}
resp. \eqref{radio121okee} holds and there exists $l_{0}>0$ such that for all $l\geq l_{0},$ $\epsilon>0$ and $t\in {\mathbb R}$ we have \eqref{radio1211212345},
then the function $G: {\mathbb R} \rightarrow Y,$ given by \eqref{wer},
is well-defined and (equi-)Weyl-$(p,\phi,F_{1})_{2}$-almost periodic.
\end{thm}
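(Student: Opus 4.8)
The plan is to follow the template of Theorem \ref{jensen-12121121}, which settles the $_{1}$-analogue, and to modify only the final conversion so as to respect the \emph{outer} position of $\phi(\cdot)$ in the $_{2}$-seminorm of Definition \ref{w-orliczaq2}. Writing $G(x)=\int_{-x}^{\infty}R(v+x)\check{g}(v)\,dv$, the hypothesis \eqref{picni} shows at once that $G(\cdot)$ is well-defined and that the integrals defining $G(x)$ and $G(x+\tau)-G(x)$ converge absolutely for all $x,\tau\in{\mathbb R}$; membership of $G(\cdot+\tau)-G(\cdot)$ in $L^{p(x)}(K)$ for every compact $K\subseteq{\mathbb R}$ then follows from Lemma \ref{aux}(ii), exactly as in Theorem \ref{jensen-12121121}. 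After fixing $\epsilon>0$ and selecting $l,L$ together with a point $\tau$ from the equi-Weyl-$(p,\phi,F)_{2}$-almost periodicity of $\check{g}$, the goal is to control $\|G(\cdot+\tau)-G(\cdot)\|_{L^{p(\cdot)}[t,t+l]}$ uniformly in $t$.

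In contrast with Theorem \ref{jensen}, no Jensen chain is needed here, precisely because $\phi(\cdot)$ sits outside the $L^{p(\cdot)}$-norm. Partitioning the convolution integral over the intervals $[-x+kl,-x+(k+1)l]$ and applying the H\"older inequality of Lemma \ref{aux}(i) on each piece yields directly
\begin{align*}
\|G(x+\tau)-G(x)\|\leq 2\sum_{k=0}^{\infty}\bigl\|R(v+x)\bigr\|_{L^{q(v)}[-x+kl,-x+(k+1)l]}\bigl\|\check{g}(v+\tau)-\check{g}(v)\bigr\|_{L^{p(v)}[-x+kl,-x+(k+1)l]}.
\end{align*}
The $_{2}$-specific point is the extraction of the inner norm: the controlling quantity for $\check{g}$ is $\phi\bigl(F(l,t)\|\check{g}(\cdot+\tau)-\check{g}(\cdot)\|_{L^{p(\cdot)}[t,t+l]}\bigr)\leq\epsilon$, so $F(l,t)\|\check{g}(\cdot+\tau)-\check{g}(\cdot)\|_{L^{p(\cdot)}[t,t+l]}\in\phi^{-1}([0,\epsilon])$ and hence, with $t=-x+kl$,
$$
\bigl\|\check{g}(v+\tau)-\check{g}(v)\bigr\|_{L^{p(v)}[-x+kl,-x+(k+1)l]}\leq F(l,-x+kl)^{-1}\sup\phi^{-1}\bigl([0,\epsilon]\bigr).
$$
Substituting this into the displayed estimate produces exactly the series of \eqref{radio121okee}, which is finite by hypothesis, multiplied by $\sup\phi^{-1}([0,\epsilon])$, i.e.\ the quantity appearing in the integrand of \eqref{radio1211212345}.

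It remains to pass from the pointwise bound to the $_{2}$-seminorm. Feeding the estimate for $\|G(x+\tau)-G(x)\|$ into the defining Luxemburg-norm characterization and invoking \eqref{radio1211212345} shows that $\|G(\cdot+\tau)-G(\cdot)\|_{L^{p(\cdot)}[t,t+l]}\leq\lambda$ for every $t\in{\mathbb R}$. Since $\lambda\leq a$ and the inclusion $[0,a]\subseteq F_{1}(l,t)^{-1}\phi^{-1}([0,\epsilon])$ gives $\phi\bigl(F_{1}(l,t)s\bigr)\leq\epsilon$ for all $s\in[0,a]$, it follows that $\phi\bigl(F_{1}(l,t)\|G(\cdot+\tau)-G(\cdot)\|_{L^{p(\cdot)}[t,t+l]}\bigr)\leq\epsilon$ uniformly in $t$, which is precisely the inequality defining $e\text{-}W_{ap}^{(p,\phi,F_{1})_{2}}({\mathbb R}:Y)$; the non-equi Weyl case is identical upon replacing the fixed $l$ by the prescription with $\limsup_{l\rightarrow\infty}$. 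I expect this last conversion to be the only delicate point: because $\phi(\cdot)$ need not be a bijection, one must argue throughout with the level-set quantity $\sup\phi^{-1}([0,\epsilon])$ and with the inclusion $[0,a]\subseteq F_{1}(l,t)^{-1}\phi^{-1}([0,\epsilon])$ rather than applying $\phi^{-1}$ as a function, and one must keep the single constant $\lambda$ consistent between the Luxemburg bound coming from \eqref{radio1211212345} and the inequality $\lambda\leq a$.
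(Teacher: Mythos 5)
Your proof is correct and is exactly the argument the paper intends: Theorem \ref{jensen-121211212} is stated there without proof, as an analogue of Theorem \ref{jensen-12121121}, and your adaptation of that template — well-definedness and absolute convergence from \eqref{picni}, membership in $L^{p(x)}(K)$ via Lemma \ref{aux}(ii), the partition-plus-H\"older estimate, the level-set bound $\bigl\|\check{g}(v+\tau)-\check{g}(v)\bigr\|_{L^{p(v)}[-x+kl,-x+(k+1)l]}\leq F(l,-x+kl)^{-1}\sup\phi^{-1}([0,\epsilon])$ reflecting the outer position of $\phi$, and the Luxemburg-norm conversion via \eqref{radio1211212345} — matches it step for step, correctly observing that no Jensen/convexity machinery is needed here since $\phi$ acts outside the $L^{p(\cdot)}$-norm. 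The only point worth recording is that your closing step reads the inclusion hypothesis as $[0,a]\subseteq F_{1}(l,t)^{-1}\phi^{-1}([0,\epsilon])$ with $F_{1}$ where the statement literally writes $F$; this is the right reading (with $F$ the function $F_{1}$ would be entirely unconstrained and the conclusion unprovable), and the paper's own proof of the $_{1}$-analogue performs the same silent substitution in its final equivalence relation.
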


\begin{thm}\label{napolje-vani-univer}
Suppose that, with the exception of equation \eqref{okee}, all remaining assumptions from the formulation of \emph{Theorem \ref{napolje-vani}} hold.
If for every $t\in {\mathbb R}$ and $l,\ \epsilon>0$ we have the existence of two positive real numbers $a>0$ and $\lambda >0$ such that $\lambda \leq a,$ $[0,a]\subseteq F_{1}(l,t)^{-1}\phi^{-1}([0,\epsilon ])$ and 
\begin{align}\label{okjejen121}
\int^{1}_{0}\varphi_{p(x)}\Biggl(2 \frac{\omega(\epsilon) S(l,t) W_{2}(x)}{\lambda} \Biggr)\, dx \leq 1,
\end{align}
resp. if \eqref{ocek121} holds and there exists $l_{0}>0$ such that for all $l\geq l_{0},$ $\epsilon>0$ and $t\in {\mathbb R}$ we have \eqref{okjejen121},
then the function $G: {\mathbb R} \rightarrow Y,$ given by \eqref{wer},
is well-defined and (equi-)Weyl-$[p,\phi,F_{2}]$-almost periodic.
\end{thm}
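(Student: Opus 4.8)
The plan is to repeat, essentially verbatim, the argument underlying Theorem \ref{napolje-vani} (which itself follows the pattern of the proof of Theorem \ref{jensenjen}), and to modify only the concluding passage in which the bound on the Luxemburg norm is transferred to the Weyl seminorm; this is the point at which the type-1 and type-2 classes diverge, since in Definition \ref{w-orlicztistr} the outer function $\phi$ is applied \emph{after} the multiplication by $F_{1}(l,t)$ rather than before it. I would carry out the argument for the equi-Weyl case and deduce the Weyl case by letting $l$ range over $[l_{0},\infty)$ and passing to $\limsup_{l\rightarrow\infty}$ at the end.

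First I would settle well-definedness exactly as at the beginning of the proof of Theorem \ref{jensen}: writing $G(x)=\int_{0}^{\infty}R(v)g(x-v)\,dv$ and using the second inequality in \eqref{picni} together with \eqref{micni}, the integrals defining $G(x)$ and $G(x+\tau)-G(x)$ converge absolutely for every $x,\tau\in{\mathbb R}$. Fixing $\epsilon>0$, I choose $l,$ $L$ and the point $\tau$ supplied by the hypothesis, split the convolution $G(xl+t+\tau)-G(xl+t)$ over the unit intervals $[k,k+1]$, apply the H\"older inequality of Lemma \ref{aux}(i) on each piece, and feed in the displayed hypothesis of Theorem \ref{napolje-vani} together with \eqref{ocek121}. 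Since here no outer $\phi$ enters the integrand, neither convexity nor the Jensen inequality is needed at this stage, and I obtain the pointwise estimate
\begin{align*}
\bigl\|G(xl+t+\tau)-G(xl+t)\bigr\|\leq 2\,\omega(\epsilon)\,S(l,t)\,W_{2}(x),\qquad x\in[0,1],\ t\in{\mathbb R}.
\end{align*}

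Next I would insert this estimate into \eqref{okjejen121}. As $\varphi_{p(x)}(\cdot)$ is nondecreasing, the pointwise bound and \eqref{okjejen121} give $\int_{0}^{1}\varphi_{p(x)}(\|G(xl+t+\tau)-G(xl+t)\|/\lambda)\,dx\leq 1$, whence, by Lemma \ref{aux}(iii) and the definition of the Luxemburg norm, $\|G(\cdot l+t+\tau)-G(t+\cdot l)\|\in L^{p(\cdot)}([0,1])$ with
\begin{align*}
\bigl(\bigl\|G(\cdot l+t+\tau)-G(t+\cdot l)\bigr\|\bigr)_{L^{p(\cdot)}[0,1]}\leq\lambda,\qquad t\in{\mathbb R};
\end{align*}
this also verifies the a priori integrability required in Definition \ref{w-orlicztistr}. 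The one genuinely new point is the final transfer to the type-2 seminorm: the hypothesis $[0,a]\subseteq F_{1}(l,t)^{-1}\phi^{-1}([0,\epsilon])$ says precisely that $\phi(F_{1}(l,t)s)\leq\epsilon$ for every $s\in[0,a]$, and applying this with $s=\bigl(\bigl\|G(\cdot l+t+\tau)-G(t+\cdot l)\bigr\|\bigr)_{L^{p(\cdot)}[0,1]}\leq\lambda\leq a$ gives
\begin{align*}
\phi\Bigl[F_{1}(l,t)\bigl(\bigl\|G(\cdot l+t+\tau)-G(t+\cdot l)\bigr\|\bigr)_{L^{p(\cdot)}[0,1]}\Bigr]\leq\epsilon,\qquad t\in{\mathbb R}.
\end{align*}
Taking the supremum over $t$ yields $e-\|G\|_{[p,\phi,F_{1},\tau]_{2}}\leq\epsilon$, that is, the equi-Weyl-$[p,\phi,F_{1}]_{2}$-almost periodicity of $G$; the Weyl case follows by running the same estimate for all $l\geq l_{0}$ and taking $\limsup_{l\rightarrow\infty}$.

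I expect no serious obstacle. The containment hypothesis is deliberately engineered so that the interchange of $\phi$ and $F_{1}$ requires neither injectivity nor continuity of $\phi$: the type-2 placement of $\phi$ is absorbed entirely by using $[0,a]\subseteq F_{1}(l,t)^{-1}\phi^{-1}([0,\epsilon])$ in place of the type-1 containment $[0,a]\subseteq\phi^{-1}([0,\epsilon F_{1}(l,t)^{-1}])$ of Theorem \ref{napolje-vani}. The only step demanding a little care is the bookkeeping of the shifts $xl+t-v-k$ when the convolution is split, so that the displayed hypothesis of Theorem \ref{napolje-vani} can be invoked with the correct integer $k$; this is identical to the corresponding step in the proof of Theorem \ref{jensenjen}.
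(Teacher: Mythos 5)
Your proposal is correct and is exactly the argument the paper intends: the paper states Theorem \ref{napolje-vani-univer} without proof, as an analogue of Theorem \ref{napolje-vani} and Theorem \ref{jensen-121211212}, and your reconstruction follows precisely that indicated route --- splitting the convolution over unit intervals and applying Lemma \ref{aux}(i), inserting the translation hypothesis and \eqref{ocek121} to get the pointwise bound $2\omega(\epsilon)S(l,t)W_{2}(x)$, using \eqref{okjejen121} to bound the Luxemburg norm by $\lambda$, and then invoking $\lambda\leq a$ and $[0,a]\subseteq F_{1}(l,t)^{-1}\phi^{-1}([0,\epsilon])$ to pass to the type-2 seminorm, with no Jensen/convexity step needed since $\phi$ sits outside the norm. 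You also correctly read the conclusion as equi-Weyl-$[p,\phi,F_{1}]_{2}$-almost periodicity (the paper's ``$[p,\phi,F_{2}]$'' is a typo), so the proposal can stand as the omitted proof.
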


The invariance of asymptotical Weyl-$p$-almost periodicity under the action of finite convolution product, where the exponent $p\in [1,\infty)$ has a constant value, has been examined in \cite[Proposition 5.3, Example 5.4-5.6]{weyl} and \cite[Proposition 1, Remark 2-Remark 5]{fedorov-novi}.
Concerning the invariance of asymptotical Weyl-$p(x)$-almost periodicity under the action of finite convolution product, we will state and prove only one proposition (see \cite[Proposition 6.3]{AP} for a corresponding result regarding asymptotical Stepanov-$p(x)$-almost periodicity). In order to do that, suppose that (see also Definition \ref{asym-weyl-variable}, where the domain of function $g(\cdot)$ is the non-negative real axis)
there exist two functions 
$g : {\mathbb R} \rightarrow X$ and $q : [0,\infty) \rightarrow X$
such that $h(t)=g(t)+q(t)$ for a.e. $t\geq 0,$ $g(\cdot)$ belongs to some of function spaces introduced in Definition \ref{w-orlicz}-Definition \ref{w-orliczaq2} or Definition \ref{w-orliczti}-Definition \ref{w-orlicztistr}, with $I={\mathbb R},$
and $q(\cdot)$ belongs to some of function spaces introduced in Definition \ref{stea-weyl0px}-Definition \ref{stea-weyl0px2}, with $I=[0,\infty).$ The study of qualitative properties of the function
\begin{align*}
t\mapsto H(t)\equiv \int^{t}_{0}R(t-s)[g(s)+q(s)]\, ds,\quad t\geq 0
\end{align*}
is based on the decomposition
$$
H(t)=\int^{t}_{0}R(t-s)q(s)\, ds+\Biggl[ \int^{t}_{-\infty}R(t-s)g(s)\, ds -\int^{\infty}_{t}R(s)g(t-s)\, ds \Biggr],\quad t\geq 0
$$
and the use of corresponding results for infinite convolution product. In the following proposition, we will consider the qualitative properties of functions
\begin{align}\label{bit-zero}
t\mapsto H_{1}(t)\equiv\int^{\infty}_{t}R(s)g(t-s)\, ds,\quad t\geq 0
\end{align}
and 
\begin{align}\label{bit}
t\mapsto H_{2}(t)\equiv \int^{t}_{0}R(t-s)q(s)\, ds,\quad t\geq 0
\end{align}
separately. In the first part of proposition, we continue our analysis from \cite[Proposition 5.2]{toka-marek-prim}; our previous results show that the case $p(x)\equiv p>1$ is not simple in the analysis of asymptotical Weyl-$p$-almost periodicity so that we will consider the case $p(x)\equiv 1$ in the second part, with the notion introduced in Definition \ref{stea-weyl0px}(i) only (cf. also \cite[Proposition 5.3(i)]{weyl}).

\begin{prop}\label{finite}
\begin{itemize}
\item[(i)] Suppose that $p,\ q\in {\mathcal P}([0,1]),$ $1/p(x) +1/q(x)=1$
and $(R(t))_{t> 0}\subseteq L(X,Y)$ is a strongly continuous operator family. Let the function $\check{g} : {\mathbb R} \rightarrow X$ be Stepanov $p(x)$-bounded and let for each $t\geq 0$ the series $\sum_{k=0}^{\infty}\| R(\cdot +t+k)\|_{L^{q(\cdot)}[0,1]}\equiv S(t)$ be convergent. Then the function $H_{1}(\cdot),$ given by \eqref{bit-zero}, is well-defined. Furthermore, this function is continuous provided that the Bochner transform $\hat{\check g} : {\mathbb R} \rightarrow L^{p(x)}([0,1])$ is uniformly continuous, while the function
$H_{1}(\cdot)$ satisfies $\lim_{t\rightarrow +\infty}H_{1}(t)=0$ provided that $\lim_{t\rightarrow +\infty}S(t)=0.$
\item[(ii)] Suppose that 
$(R(t))_{t> 0}\subseteq L(X,Y)$ is a strongly continuous operator family such that $\int^{\infty}_{0}\|R(s)\|_{L(X,Y)}\, ds <\infty .$ Let the function $q : [0,\infty) \rightarrow Y$ be equi-Weyl-$(1,x,F)$-vanishing and let $F_{1} : (0,\infty) \times [0,\infty) \rightarrow (0,\infty).$ If
for each $\epsilon>0$ there exists $l_{0}>0$ such that for each $l>l_{0}$ there exists $t_{0,l}>0$ such that for each $t\geq t_{0,l}$ we have 
$$
\sup_{x\geq 0}\Biggl [F_{1}(l,t)\int^{x+t}_{0} \Biggr[\int_{x+t}^{x+t+l}\bigl \| R(s-r)\bigr\|_{L(X,Y)}\, ds \Biggr ]\bigl \| q(r)\bigr\|_{Y}\, dr  \Biggr]<\epsilon,
$$
and if, additionally, there exists a finite constant $M>0$ such that
\begin{align}\label{zran}
\frac{F_{1}(l,t)}{F(l,t)}\leq M,\quad l>0,\ t\geq 0,
\end{align}
then the function $H_{2}(\cdot)$ is equi-Weyl-$(1,x,F_{1})$-vanishing.
\end{itemize}
\end{prop}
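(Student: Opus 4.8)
The plan is to treat the two parts separately, in each case reducing matters to the H\"older inequality of Lemma \ref{aux}(i) together with the known behaviour of the data $\check g$ and $q$. For part (i), I would first substitute $v=s-t$ so as to write $H_{1}$ with $R$ evaluated away from the origin, namely $H_{1}(t)=\int_{0}^{\infty}R(v+t)\check g(v)\,dv$ for $t\geq 0$. Splitting $[0,\infty)=\bigcup_{k\geq 0}[k,k+1]$ and applying the H\"older inequality with the conjugate exponents $p(\cdot),q(\cdot)$ on each unit interval, and then using the Stepanov $p(x)$-boundedness of $\check g$ in the form $\|\check g(\cdot+k)\|_{L^{p(\cdot)}[0,1]}\leq \|\check g\|_{S^{p(x)}}$, gives
\begin{align*}
\|H_{1}(t)\|\leq 2\sum_{k=0}^{\infty}\|R(\cdot+t+k)\|_{L^{q(\cdot)}[0,1]}\,\|\check g(\cdot+k)\|_{L^{p(\cdot)}[0,1]}\leq 2\|\check g\|_{S^{p(x)}}\,S(t).
\end{align*}
This yields well-definedness, and since the bound is a constant multiple of $S(t)$ it also gives $\lim_{t\to+\infty}H_{1}(t)=0$ whenever $\lim_{t\to+\infty}S(t)=0$. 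For continuity at a fixed $t$, the shift $u=v+h$ in $H_{1}(t+h)$ produces
\begin{align*}
H_{1}(t+h)-H_{1}(t)=\int_{h}^{\infty}R(u+t)\bigl[\check g(u-h)-\check g(u)\bigr]\, du-\int_{0}^{h}R(u+t)\check g(u)\, du,
\end{align*}
where the last term is the vanishing tail of an absolutely convergent integral and the first is estimated exactly as above; here $\|\check g(\cdot-h)-\check g(\cdot)\|_{L^{p(\cdot)}[k,k+1]}=\|\hat{\check g}(k-h)-\hat{\check g}(k)\|_{L^{p(\cdot)}[0,1]}$ is uniformly small in $k$ by the uniform continuity of $\hat{\check g}$, so the difference is dominated by $2S(t)\,\omega(h)+o(1)$ with $\omega(h)\to 0$.

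For part (ii), the heart of the matter is a splitting of the inner convolution integral according to whether the integration variable lies below or above the level $t+x$. Starting from $\|H_{2}(t+v)\|\leq\int_{0}^{t+v}\|R(t+v-s)\|\,\|q(s)\|\, ds$ and integrating over $v\in[x,x+l]$, I would write $\int_{0}^{t+v}=\int_{0}^{t+x}+\int_{t+x}^{t+v}$. Interchanging the order of integration in the first (far) piece and substituting $u=t+v$ turns it into
\begin{align*}
\int_{0}^{t+x}\|q(r)\|\Bigl[\int_{t+x}^{t+x+l}\|R(u-r)\|\, du\Bigr]\, dr,
\end{align*}
which is exactly the quantity controlled by the standing hypothesis; hence $\sup_{x\geq 0}$ of $F_{1}(l,t)$ times this piece is $<\epsilon$ once $l>l_{0}$ and $t\geq t_{0,l}$.

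The second (near) piece is treated by the substitution $w=t+v-s$ followed by Fubini, yielding
\begin{align*}
\int_{x}^{x+l}\int_{t+x}^{t+v}\|R(t+v-s)\|\,\|q(s)\|\, ds\, dv\leq \Bigl[\int_{0}^{\infty}\|R(w)\|\, dw\Bigr]\int_{x}^{x+l}\|q(t+v)\|\, dv.
\end{align*}
Multiplying by $F_{1}(l,t)$, using $F_{1}(l,t)\leq MF(l,t)$ from \eqref{zran}, and invoking that $q(\cdot)$ is equi-Weyl-$(1,x,F)$-vanishing shows that this contribution satisfies the required triple limit. Combining the two pieces gives
\begin{align*}
\lim_{l\to+\infty}\,\limsup_{t\to+\infty}\,\sup_{x\geq 0}\Bigl[F_{1}(l,t)\int_{x}^{x+l}\|H_{2}(t+v)\|\, dv\Bigr]=0,
\end{align*}
which is precisely the definition of an equi-Weyl-$(1,x,F_{1})$-vanishing function, while the local integrability $\|H_{2}(t+\cdot)\|\in L^{1}[x,x+l]$ is read off from the same two bounds.

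The main obstacle I anticipate is the bookkeeping in part (ii): one must split and re-order the double integral so that the far part reproduces \emph{exactly} the convolution expression appearing in the hypothesis, while the near part factorizes as $\int_{0}^{\infty}\|R\|$ times a genuine Weyl shift of $q$, and then check that the three nested operations $\lim_{l\to\infty}$, $\limsup_{t\to\infty}$ and $\sup_{x\geq 0}$ are applied in the correct order to each piece. In part (i) the comparatively delicate point is aligning the unit-interval indexing of the variable exponent with the uniform-continuity modulus of $\hat{\check g}$, which works because the $1$-periodic extension of $p$ matches the Bochner transform at integer shifts.
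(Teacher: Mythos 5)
Your proof is correct and follows essentially the same route as the paper: in part (i) the same unit-interval splitting and H\"older bound giving $\|H_{1}(t)\|\leq 2\|\check g\|_{S^{p(x)}}S(t)$, and the same decomposition of the increment into a difference term (controlled by uniform continuity of $\hat{\check g}$) plus a small boundary term; in part (ii) the same two-piece estimate of $F_{1}(l,t)\int_{x+t}^{x+t+l}\|H_{2}(s)\|_{Y}\,ds$, with the far piece handled by the standing hypothesis and the near piece by \eqref{zran} together with the equi-Weyl-$(1,x,F)$-vanishing of $q$. The only difference is that you derive the key splitting in (ii) directly via Fubini and a change of variables, whereas the paper imports that inequality from the proof of Proposition 5.3(i) of \cite{weyl}; this makes your argument self-contained but is not a different method.
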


\begin{proof}
(i): The first part follows from the Stepanov $p(x)$-boundedness of function $\check{g}(\cdot)$ and the following simple  computation
\begin{align*}
\Biggl\| \int^{\infty}_{t}& R(s)\check{g}(s-t)\, ds \Biggr \|=\Biggl\| \sum_{k=0}^{\infty} \int^{1}_{0} R(s+t+k)\check{g}(s+k)\, ds \Biggr \|
\\ & \leq 2\sum_{k=0}^{\infty}\bigl\| R(\cdot +t+k)\bigr\|_{L^{q(\cdot)}[0,1]}\sup_{k\in {\mathbb N}_{0}}\bigl\| \check{g}(\cdot +k) \bigr\|_{L^{p(\cdot)}[0,1]}.
\end{align*}
This computation also shows that $\lim_{t\rightarrow +\infty}H_{1}(t)=0$ provided that $\lim_{t\rightarrow +\infty}S(t)=0.$ For remainder, let us suppose that the function $\hat{\check g} : {\mathbb R} \rightarrow L^{p(x)}([0,1])$ is uniformly continuous.
Let $(t_{n})$ be a sequence of non-negative reals converging to a number $t\geq 0.$ Then we can use the  H\"older inequality and the decomposition
\begin{align*}
\int^{\infty}_{t}R(s)&g(t-s)\, ds-\int^{\infty}_{t_{n}}R(s)g\bigl(t_{n}-s\bigr)\, ds
\\ & =\int^{\infty}_{t}R(s)\bigl[\check{g}(s-t)-\check{g}\bigl(s-t_{n}\bigr)\bigr]\, ds +\int^{t_{n}}_{t}R(s)\check{g}(s-t)\, ds,\quad n\in {\mathbb N}
\end{align*}
in order to see that
\begin{align*}
\Biggl\| & \int^{\infty}_{t}R(s)g(t-s)\, ds-\int^{\infty}_{t_{n}}R(s)g\bigl(t_{n}-s\bigr)\, ds\Biggr\|
\\ & \leq 2\sum_{k=0}^{\infty}\| R(\cdot +t+k)\|_{L^{q(\cdot)}[0,1]}\sup_{k\in {\mathbb N}_{0}}\bigl\| \check{g}(\cdot +k) - \check{g}(\cdot +k+(t-t_{n}))\bigr\|_{L^{p(\cdot)}[0,1]}
\\ & + 2\bigl \| R(\cdot)\bigr\|_{L^{q(\cdot)}[0,|t_{n}-t|]}\bigl \| \check{g}(\cdot)\bigr\|_{L^{p(\cdot)}[0,1]},\quad n\in {\mathbb N}.
\end{align*}
Since $\bigl \| R(\cdot)\bigr\|_{L^{q(\cdot)}[0,|t_{n}-t|]} \rightarrow 0$ as $n\rightarrow +\infty$ (see, e.g., \cite[Lemma 3.2.8(c)]{variable}) and the function $\hat{\check g} : {\mathbb R} \rightarrow L^{p(x)}([0,1])$ is uniformly continuous, the proof of the first part is completed.

(ii): By the proof of \cite[Proposition 5.3(i)]{weyl}, we have 
\begin{align*}
F_{1}(l,t)\int^{x+t+l}_{x+t}&\bigl\|H_{2}(s)\bigr\|_{Y}\, ds \leq F_{1}(l,t)\int^{x+t}_{0} \Biggr[\int_{x+t}^{x+t+l}\bigl \| R(s-r)\bigr\|_{L(X,Y)}\, ds \Biggr ]\bigl \| q(r)\bigr\|_{Y}\, dr
\\ & +F_{1}(l,t)
\Biggl[\int_{0}^{\infty}\bigl \| R(v)\bigr\|_{L(X,Y)}\, dv \Biggr] \ \cdot \ \int^{x+t+l}_{x+t}\bigl \| q(r)\bigr\|_{Y}\, dr,
\end{align*}
for any $x\geq 0$ and $ l>0.$ Our preassumption shows that the first addend is equi-Weyl-$(1,x,F_{1})$-vanishing. The second addend is likewise equi-Weyl-$(1,x,F_{1})$-vanishing because we have assumed that the function $q(\cdot)$ is equi-Weyl-$(1,x,F)$-vanishing
and condition \eqref{zran}.
\end{proof}

We round off this section by examing the convolution invariance of Weyl almost periodic functions with variable exponent. In order to do that, we shall basically follow the method proposed in the proof of Theorem \ref{jensen}.

\begin{prop}\label{zas-heban}
Suppose that $I={\mathbb R},$ $\psi \in L^{1}({\mathbb R}),$ $(a_{k})_{k\in {\mathbb Z}}$ is a sequence of positive real numbers satisfying $\sum_{k\in {\mathbb Z}}a_{k}=1$ and condition \emph{(A1)} holds true. Let $f\in (e-)W_{ap}^{(p,\phi,F)}({\mathbb R} : X) \cap L^{\infty}({\mathbb R} : X).$ Then the function 
\begin{align}\label{wounds}
x\mapsto (\psi \ast f)(x):= \int^{+\infty}_{-\infty}\psi(x-y)f(y)\, dy,\quad x\in {\mathbb R}
\end{align}
is well-defined and belongs to the space $L^{\infty}({\mathbb R} : X).$ Furthermore, if $p_{1}\in {\mathcal P}({\mathbb R}),$ $F_{1} : (0,\infty)  \times {\mathbb R} \rightarrow (0,\infty)$ and if, for every $t\in {\mathbb R}$ and $l>0,$ we have
\begin{align}\label{proba}
\int^{t+l}_{t}\varphi_{p_{1}(x)}\Biggl( 2l^{-1}F_{1}(l,t)\varphi(l)\sum_{k\in {\mathbb Z}}\frac{a_{k}\bigl\| \varphi \bigl( a_{k}^{-1}\psi(x-z) \bigr)\bigr\|_{L^{q(z)}[x-(k+1)l,x-kl]}}{F(l,x-(k+1)l)}\Biggr)\, dx \leq 1,
\end{align}
then we have $\psi \ast f \in (e-)W_{ap}^{(p_{1},\phi,F_{1})}({\mathbb R} : X).$
\end{prop}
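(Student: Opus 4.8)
The plan is to mimic the structure of the proof of Theorem \ref{jensen}, replacing the infinite convolution $\int_{-\infty}^{x} R(x-s)g(s)\,ds$ with the two-sided convolution $\int_{-\infty}^{+\infty}\psi(x-y)f(y)\,dy$ and the operator norm $\|R(\cdot)\|$ with $|\psi(\cdot)|$. First I would dispose of the well-definedness and $L^\infty$-boundedness: since $\psi\in L^1(\mathbb R)$ and $f\in L^\infty(\mathbb R:X)$, the convolution integral converges absolutely with $\|(\psi\ast f)(x)\|\leq \|\psi\|_{L^1}\|f\|_{L^\infty}$ for every $x$, so $\psi\ast f\in L^\infty(\mathbb R:X)$. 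This is routine. For the almost periodicity, I would fix $\epsilon>0$ and choose $l>0,L>0$ (for the equi-case) so that any interval of length $L$ contains a $\tau$ with $e\text{-}\|f\|_{(p,\phi,F,\tau)}\leq\epsilon$ as in \eqref{profice}.

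The central computation estimates $\varphi_{p_1(x)}\bigl(\phi(\|(\psi\ast f)(x+\tau)-(\psi\ast f)(x)\|)/\lambda\bigr)$. The key observation is that $(\psi\ast f)(x+\tau)-(\psi\ast f)(x)=\int_{-\infty}^{+\infty}\psi(x-y)\bigl[f(y+\tau)-f(y)\bigr]\,dy$ after the change of variables $y\mapsto y-\tau$ in the first term. I would then split $\mathbb R=\bigcup_{k\in\mathbb Z}[x-(k+1)l,\,x-kl)$, insert the weights $a_k$ summing to $1$, and apply the Jensen inequality \eqref{infinitever} to pull $\phi$ inside the sum, exactly as in Theorem \ref{jensen}. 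Using convexity together with $\phi(xy)\leq\varphi(x)\phi(y)$ from condition (A1), I factor out $\varphi(l a_k^{-1})$ and (after writing $\phi$ of the product $|\psi(x-y)|\cdot\|f(y+\tau)-f(y)\|$) split off $\varphi(|\psi(x-y)|)$ from $\phi(\|f(y+\tau)-f(y)\|)$. The H\"older inequality Lemma \ref{aux}(i) with the conjugate exponents $1/p(z)+1/q(z)=1$ then separates the $\psi$-factor in $L^{q(z)}$ from the $f$-difference in $L^{p(z)}$ over each block $[x-(k+1)l,x-kl]$, and the latter is bounded using \eqref{profice} by $\epsilon F(l,x-(k+1)l)^{-1}$ (the argument of $F$ being the left endpoint of the $k$-th block). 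The one bookkeeping subtlety here, which I expect to be the main obstacle, is getting the indexing of the blocks and the arguments $x-(k+1)l$ versus $x-kl$ to match the statement exactly --- the convolution runs over $y$ while $F$ and the norms are indexed by the shifted variable $z=x-y$, so one must track the orientation of each interval carefully (note the interval $[x-(k+1)l,x-kl]$ in \eqref{proba} is written with the smaller endpoint first). Handling the weight $\varphi(la_k^{-1})=\varphi(l)\varphi(a_k^{-1})$ requires the submultiplicativity afforded by (A1), which lets $\varphi(l)$ come out of the sum as in \eqref{proba}.

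After these steps the inner expression is bounded, for the choice $\lambda=\epsilon F_1(l,t)$, by the argument of $\varphi_{p_1(x)}$ appearing in \eqref{proba}. Integrating $\varphi_{p_1(x)}(\cdots/\lambda)$ over $[t,t+l]$ and invoking hypothesis \eqref{proba} gives $\int_t^{t+l}\varphi_{p_1(x)}\bigl(\phi(\|(\psi\ast f)(x+\tau)-(\psi\ast f)(x)\|)/(\epsilon F_1(l,t))\bigr)\,dx\leq 1$, which by the definition of the Luxemburg norm means $\phi(\|(\psi\ast f)(\cdot+\tau)-(\psi\ast f)(\cdot)\|)_{L^{p_1(\cdot)}[t,t+l]}\leq \epsilon F_1(l,t)$, i.e. $F_1(l,t)^{-1}\cdots$ --- rearranged, $F_1(l,t)\,[\phi(\cdots)]_{L^{p_1}[t,t+l]}\leq\epsilon$ uniformly in $t$. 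Taking the supremum over $t\in\mathbb R$ yields $e\text{-}\|\psi\ast f\|_{(p_1,\phi,F_1,\tau)}\leq\epsilon$, which is precisely the equi-Weyl-$(p_1,\phi,F_1)$-almost periodicity from Definition \ref{w-orlicz}. The membership condition $\phi(\|(\psi\ast f)(\cdot+\tau)-(\psi\ast f)(\cdot)\|)\in L^{p_1(x)}(K)$ for compact $K$ follows from the same estimate by choosing $l$ large enough that $K\subseteq[t,t+l]$, exactly as in the proof of Theorem \ref{jensen}. The non-equi Weyl case is identical except that one takes $\limsup_{l\to\infty}$ at the end, so it suffices to have \eqref{proba} for all sufficiently large $l$.
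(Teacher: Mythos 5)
Your plan is indeed the paper's own proof: rewrite $(\psi\ast f)(x+\tau)-(\psi\ast f)(x)=\int_{-\infty}^{+\infty}\psi(y)[f(x+\tau-y)-f(x-y)]\,dy$, split into blocks of length $l$, insert the weights $a_{k}$, apply \eqref{infinitever} and the Jensen integral inequality, use (A1) to separate the $\psi$-factor from the $f$-difference, substitute $z=x-y$, apply the H\"older inequality of Lemma \ref{aux}(i), bound the $L^{p(z)}$-factor over $[x-(k+1)l,x-kl]$ by $\epsilon F(l,x-(k+1)l)^{-1}$ via \eqref{profice} (your block indexing here is correct), and finally invoke \eqref{proba}. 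However, two of your steps fail as written, and they concern precisely the bookkeeping that \eqref{proba} encodes.

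First, the grouping of the (A1)-factors. You pull out $\varphi(la_{k}^{-1})$ and then, separately, $\varphi(|\psi(x-y)|)$, and to reconcile this with \eqref{proba} you assert ``$\varphi(la_{k}^{-1})=\varphi(l)\varphi(a_{k}^{-1})$'' as a consequence of (A1). Condition (A1) only states $\phi(xy)\leq\varphi(x)\phi(y)$; it gives no multiplicativity (or submultiplicativity) of $\varphi$ itself, and even granting it you would end up with $\varphi(a_{k}^{-1})\bigl\|\varphi(\psi(x-z))\bigr\|_{L^{q(z)}}$, which is not the quantity $\bigl\|\varphi(a_{k}^{-1}\psi(x-z))\bigr\|_{L^{q(z)}}$ occurring in \eqref{proba}. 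The paper's computation keeps $a_{k}^{-1}$ attached to $\psi$: inside the $k$-th block it writes the integral as $l\cdot l^{-1}\int_{kl}^{(k+1)l}a_{k}^{-1}|\psi(y)|\,\|f(x+\tau-y)-f(x-y)\|\,dy$, first applies $\phi(lu)\leq\varphi(l)\phi(u)$, then Jensen to the average $l^{-1}\int$, and only then $\phi\bigl(a_{k}^{-1}|\psi(y)|v\bigr)\leq\varphi\bigl(a_{k}^{-1}\psi(y)\bigr)\phi(v)$; this is what produces the factors $\varphi(l)$ and $\varphi(a_{k}^{-1}\psi(\cdot))$ exactly as they stand in \eqref{proba}. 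Second, the normalization. To conclude $F_{1}(l,t)\bigl[\phi(\|(\psi\ast f)(\cdot+\tau)-(\psi\ast f)(\cdot)\|)\bigr]_{L^{p_{1}(\cdot)}[t,t+l]}\leq\epsilon$ you must verify the modular inequality with $\lambda=\epsilon F_{1}(l,t)^{-1}$, not $\lambda=\epsilon F_{1}(l,t)$: with your choice the argument of $\varphi_{p_{1}(x)}$ carries $F_{1}(l,t)^{-1}$, so hypothesis \eqref{proba}, in which $F_{1}(l,t)$ enters positively, does not apply; moreover your closing ``rearrangement'' --- from $\|\cdot\|_{L^{p_{1}}}\leq\epsilon F_{1}(l,t)$ to $F_{1}(l,t)\|\cdot\|_{L^{p_{1}}}\leq\epsilon$ --- is algebraically false (it yields $\epsilon F_{1}(l,t)^{2}$). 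Both slips are repairable without changing your plan: regroup the factors as above and take $\lambda=\epsilon F_{1}(l,t)^{-1}$, after which \eqref{proba} applies verbatim and gives the stated conclusion.
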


\begin{proof}
The proof can be deduced by using the arguments contained in the proof of Theorem \ref{jensen}, the equality
\begin{align*}
\Bigl\| & \phi \bigl( \| (\psi \ast f)(\cdot +\tau)-(\psi \ast f)(\cdot) \|\bigr)\Bigr\|_{L^{p_{1}(\cdot)}[t,t+l]}
\\& =\inf\Biggl\{ \lambda >0 : \int^{t+l}_{t}\varphi_{p_{1}(x)}\Biggl( \frac{ \phi\bigl(\| (\psi \ast f)(x +\tau)-(\psi \ast f)(x)\| \bigr)}{\lambda}\Biggr)\, dx \leq 1\Biggr\} 
\\ & =\inf\Biggl\{ \lambda >0 : \int^{t+l}_{t}\varphi_{p_{1}(x)}\Biggl( \frac{\phi \bigl( \| \int^{+\infty}_{-\infty}\psi (y)[ f(x+\tau-y)-f(x-y)]\, dy\| \bigr)}{\lambda}\Biggr)\, dx \leq 1\Biggr\} 
\end{align*}
 and the following computation:
\begin{align*}
& \int^{t+l}_{t}\varphi_{p_{1}(x)}\Biggl( \frac{\phi \bigl( \| \int^{+\infty}_{-\infty}\psi (y)[ f(x+\tau-y)-f(x-y)]\, dy\| \bigr)}{\lambda}\Biggr)\, dx
\\ & \leq \int^{t+l}_{t}\varphi_{p_{1}(x)}\Biggl( \frac{\phi \bigl( \sum_{k\in {\mathbb Z}}a_{k}\| \int^{(k+1)l}_{kl}a_{k}^{-1}\psi (y)[ f(x+\tau-y)-f(x-y)]\, dy\| \bigr)}{\lambda}\Biggr)\, dx
\\ & \leq \int^{t+l}_{t}\varphi_{p_{1}(x)}\Biggl( \frac{\sum_{k\in {\mathbb Z}}a_{k}\phi \bigl( l^{-1}l\| \int^{(k+1)l}_{kl}a_{k}^{-1}\psi (y)[ f(x+\tau-y)-f(x-y)]\, dy\| \bigr)}{\lambda}\Biggr)\, dx
\\ & \leq \int^{t+l}_{t}\varphi_{p_{1}(x)}\Biggl( \frac{\sum_{k\in {\mathbb Z}}a_{k}\varphi(l)l^{-1}\int^{(k+1)l}_{kl}\phi \bigl( a_{k}^{-1}\psi (y)\|  f(x+\tau-y)-f(x-y)\|\bigr)\, dy }{\lambda}\Biggr)\, dx
\\ & \leq \int^{t+l}_{t}\varphi_{p_{1}(x)}\Biggl( \frac{\sum_{k\in {\mathbb Z}}a_{k}\varphi(l)l^{-1}\int^{(k+1)l}_{kl}\varphi \bigl( a_{k}^{-1}\psi (y) \bigr) \phi \bigl(\|  f(x+\tau-y)-f(x-y)\|\bigr)\, dy }{\lambda}\Biggr)\, dx
\\ & = \int^{t+l}_{t}\varphi_{p_{1}(x)}\Biggl( \frac{\sum_{k\in {\mathbb Z}}a_{k}\varphi(l)l^{-1}\int_{x-(k+1)l}^{x-kl}\varphi \bigl( a_{k}^{-1}\psi (x-z) \bigr) \phi \bigl(\|  f(z+\tau)-f(z)\|\bigr)\, dz }{\lambda}\Biggr)\, dx
\\ & \leq \int^{t+l}_{t}\varphi_{p_{1}(x)}\Biggl(2 \sum_{k\in {\mathbb Z}}a_{k}\varphi(l)l^{-1}\bigl\|\varphi \bigl( a_{k}^{-1}\psi (x-z) \bigr)\bigr\|_{L^{q(z)}[x-(k+1)l,x-kl]}
\\ & \times \frac{\bigl\|\phi \bigl(\|  f(z+\tau)-f(z)\|\bigr)\bigr\|_{L^{p(z)}[x-(k+1)l,x-kl]} }{\lambda}\Biggr)\, dx,
\end{align*}
which is valid for every $t,\ \tau \in {\mathbb R}$ and $l>0.$
\end{proof}

We can similarly prove the following result for the class of (equi-)Weyl-$[p,\phi,F]$-almost periodic functions:

\begin{prop}\label{zas-lena}
Suppose that $I={\mathbb R},$ $\psi \in L^{1}({\mathbb R}),$ $(a_{k})_{k\in {\mathbb Z}}$ is a sequence of positive real numbers satisfying $\sum_{k\in {\mathbb Z}}a_{k}=1$ and condition \emph{(B1)} holds true. Let $f\in (e-)W_{ap}^{[p,\phi,F]}({\mathbb R} : X) \cap L^{\infty}({\mathbb R} : X).$ Then the function $(\psi \ast f)(\cdot)$ defined by \eqref{wounds}
belongs to the space $L^{\infty}({\mathbb R} : X).$ Furthermore, if $p_{1}\in {\mathcal P}([0,1]),$ $F_{1} : (0,\infty)  \times {\mathbb R} \rightarrow (0,\infty)$ and if, for every $t\in {\mathbb R}$ and $l>0,$ we have
\begin{align}\label{proba-est}
\int^{1}_{0}\varphi_{p_{1}(x)}\Biggl( 2F_{1}(l,t)\sum_{k\in {\mathbb Z}}\frac{\bigl\| \varphi \bigl( la_{k}^{-1}\psi(xl-(z+k)l) \bigr)\bigr\|_{L^{q(z)}[0,1]}}{F(l,t+kl)}\Biggr)\, dx \leq 1,
\end{align}
then we have $\psi \ast f \in (e-)W_{ap}^{[p_{1},\phi,F_{1}]}({\mathbb R} : X).$
\end{prop}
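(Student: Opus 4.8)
The plan is to mirror, step by step, the proof of Proposition~\ref{zas-heban}, replacing the ``round-bracket'' $L^{p(\cdot)}[t,t+l]$-norms by the rescaled ``square-bracket'' $L^{p(\cdot)}[0,1]$-norms dictated by Definition~\ref{w-orliczti}. First I would dispose of the easy assertions: since $\psi\in L^{1}(\mathbb{R})$ and $f\in L^{\infty}(\mathbb{R}:X)$, the estimate $\|(\psi\ast f)(x)\|\le \|f\|_{L^{\infty}}\,\|\psi\|_{L^{1}}$ shows at once that $(\psi\ast f)(\cdot)$ is well-defined and lies in $L^{\infty}(\mathbb{R}:X)$. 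For the almost periodicity I would treat, as in the proof of Theorem~\ref{jensen}, only the equi-case; the Weyl case follows verbatim after inserting $\limsup_{l\to\infty}$. Fix $\epsilon>0$ and, using $f\in (e\text{-})W_{ap}^{[p,\phi,F]}(\mathbb{R}:X)$, choose $l>0$, $L>0$ and a translation $\tau$ in each interval of length $L$ so that $\phi(\|f(\cdot l+s+\tau)-f(\cdot l+s)\|)_{L^{p(\cdot)}[0,1]}\le \epsilon F(l,s)^{-1}$ for every $s\in\mathbb{R}$.

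The core computation estimates $\phi(\|(\psi\ast f)(xl+t+\tau)-(\psi\ast f)(xl+t)\|)$ for $x\in[0,1]$. After the substitution $y\mapsto y+\tau$ the translation is transferred onto $f$, giving $(\psi\ast f)(xl+t+\tau)-(\psi\ast f)(xl+t)=\int_{-\infty}^{+\infty}\psi(xl+t-y)[f(y+\tau)-f(y)]\,dy$. I would then split $\mathbb{R}=\bigcup_{k\in\mathbb{Z}}[t+kl,t+(k+1)l]$, insert the weights $a_{k}$ with $\sum_{k}a_{k}=1$, and apply successively: the infinite Jensen inequality \eqref{infinitever} together with the convexity and monotonicity of $\phi$; the bound $\phi(uv)\le\varphi(u)\phi(v)$ from condition (B1); and the H\"older inequality of Lemma~\ref{aux}(i) with the conjugate exponents $q(\cdot),p(\cdot)$. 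The decisive change of variables is $y=(z+k)l+t$, $z\in[0,1]$, which turns each block into $\int_{0}^{1}\varphi(la_{k}^{-1}\psi(xl-(z+k)l))\,\phi(\|f(zl+(t+kl)+\tau)-f(zl+(t+kl))\|)\,dz$; here the factor $la_{k}^{-1}$ arises from $dy=l\,dz$ absorbed inside $\varphi$, exactly as $\varphi(la_{k}^{-1}\psi)$ appears in \eqref{proba-est}, and the offset $t$ cancels inside $\psi$.

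The key point --- and the place where the square-bracket structure is used --- is that the rescaled increment $z\mapsto f(zl+(t+kl)+\tau)-f(zl+(t+kl))$, $z\in[0,1]$, is precisely the difference measured by the $[p,\phi,F]$-seminorm of $f$ with base point $s=t+kl$. Hence its $L^{p(z)}[0,1]$-norm is controlled by $\epsilon F(l,t+kl)^{-1}$, which is what produces the denominators $F(l,t+kl)$ in \eqref{proba-est}. Collecting the blocks I obtain a bound of the form $\phi(\|(\psi\ast f)(xl+t+\tau)-(\psi\ast f)(xl+t)\|)\le 2\epsilon\sum_{k\in\mathbb{Z}}F(l,t+kl)^{-1}\|\varphi(la_{k}^{-1}\psi(xl-(z+k)l))\|_{L^{q(z)}[0,1]}$ (up to the harmless $a_{k}$-bookkeeping). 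Finally, inserting $\lambda=\epsilon F_{1}(l,t)^{-1}$ into the Luxemburg modular and invoking hypothesis \eqref{proba-est} yields $\int_{0}^{1}\varphi_{p_{1}(x)}(\phi(\|(\psi\ast f)(xl+t+\tau)-(\psi\ast f)(xl+t)\|)/\lambda)\,dx\le 1$, whence $e\text{-}\|\psi\ast f\|_{[p_{1},\phi,F_{1},\tau]}\le\epsilon$; the same modular bound simultaneously certifies the standing integrability requirement $\phi(\|(\psi\ast f)(\cdot l+t+\tau)-(\psi\ast f)(t+\cdot l)\|)\in L^{p_{1}(\cdot)}[0,1]$ of Definition~\ref{w-orliczti}.

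I expect the main obstacle to be the change of variables and reindexing in the middle paragraph: one must verify that, after rescaling by $l$ and shifting the base point to $t+kl$, each $f$-increment genuinely falls under the square-bracket seminorm over the \emph{unit} interval $[0,1]$, so that the almost-periodicity estimate of $f$ produces the denominators $F(l,t+kl)^{-1}$. A second technical point is the convergence of the doubly-indexed series and the justification of interchanging $\phi$ with the infinite sum, which is exactly the finiteness encoded in \eqref{proba-est} and underwritten by $\psi\in L^{1}(\mathbb{R})$. The remaining manipulations --- applying \eqref{infinitever}, the submultiplicativity $\phi(uv)\le\varphi(u)\phi(v)$ and Lemma~\ref{aux}(i), and tracking the constants $2$, $l$ and $a_{k}$ --- are routine once this identification is in place.
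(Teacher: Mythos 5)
Your proposal is correct and follows essentially the route the paper intends: the paper gives no separate proof of Proposition \ref{zas-lena}, deferring to the computation in Proposition \ref{zas-heban} (itself modeled on Theorem \ref{jensen}), and your adaptation --- transferring the translation onto $f$, splitting $\mathbb{R}=\bigcup_{k\in\mathbb{Z}}[t+kl,t+(k+1)l]$, rescaling each block to $[0,1]$ so that the $f$-increment is measured by the square-bracket seminorm at base point $t+kl$, then applying \eqref{infinitever}, $\phi(uv)\leq\varphi(u)\phi(v)$, H\"older and the Luxemburg-norm characterization with $\lambda=\epsilon F_{1}(l,t)^{-1}$ --- is exactly that argument, reproducing \eqref{proba-est} after discarding the harmless factors $a_{k}\leq 1$. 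One cosmetic remark: the standing integrability requirement of Definition \ref{w-orliczti} for \emph{arbitrary} $\tau$ is most easily certified not by the modular bound (which you only establish for the almost periods) but by the trivial bound $\phi(\|(\psi\ast f)(\cdot+\tau)-(\psi\ast f)(\cdot)\|)\leq\phi(2\|f\|_{\infty}\|\psi\|_{L^{1}})$, a constant, which lies in $L^{p_{1}(x)}[0,1]$.
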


In the case of consideration of constant coefficients, the coefficient $2$ in the equations \eqref{proba} and \eqref{proba-est} can be neglected. The interested reader may try to formulate the corresponding results for the classes of (equi-)Weyl-$(p,\phi,F)_{i}$-almost periodic functions and (equi-)Weyl-$[p,\phi,F]_{i}$-almost periodic functions, where $i=1,2,$ as well as to formulate an extension of \cite[Proposition 4.3]{weyl} for Weyl almost periodic functions with variable exponent.

\subsection{Growth order of $(R(t))_{t>0}$}\label{rates}

In this subsection, we will analyze solution operator families $(R(t))_{t>0}\subseteq L(X,Y)$ which satisfies condition 
\begin{align}\label{isti-ee}
\|R(t)\|_{L(X,Y)}\leq \frac{Mt^{\beta -1}}{1+t^{\gamma}},\ t>0\mbox{ for some finite constants }\gamma>1,\ \beta \in (0,1],\ M>0,
\end{align}
or
condition
\begin{align}\label{rajasad}
\| R(t)\|_{L(X,Y)}\leq Mt^{\beta-1}e^{-ct},\quad t>0 \mbox{ for some finite constants } \beta\in (0,1] \mbox{ and } c>0.
\end{align}

For simplicity, we will analyze only the constant exponents $p(x)\equiv p \in [1,\infty)$ as well the class of (equi-)Weyl-$(p,\phi,F)$-almost periodic functions and the class of (equi-)Weyl-$(p,\phi,F)_{i}$-almost periodic functions, where $i=1,2.$ So, let $1/p+1/q=1$ and let $(R(t))_{t>0}\subseteq L(X,Y)$ satisfy 
\eqref{isti-ee} or \eqref{rajasad}. We will additionally assume that $q(\beta-1)>-1$ provided that $p>1$, resp. $\beta=1,$ provided that $p=1$.

In \cite[Proposition 2.11.1, Theorem 2.11.4]{nova-mono}, the author has investigated the estimate \eqref{isti-ee} and case $p(x)\equiv p \in [1,\infty),$ where the resulting function $G(\cdot)$ is also bounded and continuous (see also \cite{fedorov-novi} and \cite{podgorica}). We would like to note that Theorem \ref{jensen} provides a new way of looking at the invariance of the (equi-)Weyl-$p$-almost periodicity under the action of infinite convolution product 
 as well as that the (equi-)Weyl-$p$-almost periodicity in \cite[Theorem 2.11.4]{nova-mono} can be proved directly from Corollary \ref{kraj}. Let us explain this in more detail. 
Let a function $g : {\mathbb R} \rightarrow X$ be (equi-)Weyl-$p$-almost periodic. Then the function
$G : {\mathbb R} \rightarrow Y,$ defined through \eqref{wer}, is (equi-)Weyl-$p$-almost periodic and we can show this in the following way. 
It is clear that the function $\check{g}(\cdot)$ is also (equi-)Weyl-$p$-almost periodic. By Corollary \ref{kraj}, with an arbitrary sequence of positive real numbers such that $\sum_{k=0}^{\infty}a_{k}=1$ and the function $\varphi(x)\equiv x$, observing also that the class of (equi-)Weyl-$p$-almost periodic functions is closed under pointwise multiplications with scalars, it suffices to show, by considering the function $(M^{-1}R(t))_{t>0}$ for a sufficiently large real number $M>0,$ that for every real numbers $t\in {\mathbb R}$ and $l>0$ we have
\begin{align}\label{bluz}
 \int^{t+l}_{t}\Biggl( \sum_{k=0}^{\infty}\Bigl( \int^{(k+1)l}_{kl}\frac{t^{(\beta -1)q}\, dt}{(1+t^{\gamma})^{q}}\Bigr)^{1/q} \Biggr)^{p}\, dx \leq \mbox{Const.},
\end{align}
provided that $p>1,$ resp.
\begin{align}\label{bluz1}
 \int^{t+l}_{t}\sum_{k=0}^{\infty}\Bigl\| \frac{\cdot^{\beta -1}}{1+\cdot^{\gamma}}\Bigr\|_{L^{\infty}[kl,(k+1)l]}\, dx \leq \mbox{Const.},
\end{align}
provided that $p=1.$ As 
$$
\int^{(k+1)l}_{kl}\frac{t^{(\beta -1)q}\, dt}{(1+t^{\gamma})^{q}} \leq \frac{1}{1+k^{q\gamma}l^{q\gamma}}(k+1)^{(\beta-1)q}l^{(\beta-1)q+1},\quad k\in {\mathbb N}_{0},
$$
the estimate \eqref{bluz} follows from the inequality $(\beta-1+(1/q)-\gamma)p+1\leq 0,$ which is true. The estimate \eqref{bluz1} is much simpler and follows from the inequality $\gamma>1.$

Concerning Theorem \ref{jensen-12121121} and Theorem \ref{jensen-121211212},
we will provide two illustrative examples:

\begin{example}\label{ne-mogu}
Suppose that $\phi(x)=x^{\alpha},$ $x\geq 0,$ where $\alpha>0.$ 
If the estimate \eqref{isti-ee} holds, then condition \eqref{radio121} holds provided that, for every $x\in {\mathbb R}$ and $l>0,$ we have
$$
\sum_{k=0}^{\infty}k^{\beta-1-\gamma}\bigl[ F(l,-x+kl )\bigr]^{(-1)/\alpha}<\infty,
$$
while condition \eqref{radio121123} holds provided that, for every $t\in {\mathbb R}$ and $l>0,$ we have
$$
\int^{t+l}_{t}
\Biggl( \Biggl(\frac{1}{1+k^{q\gamma}l^{q\gamma}}(k+1)^{(\beta-1)q}l^{(\beta-1)q+1}\Biggr)^{1/q}\Biggl(\frac{F(l,t)}{F(l,-x+kl)}\Biggr)^{1/\alpha}\Biggr)^{p}\, dx \leq 1,
$$
if $p>1,$ resp.
$$
\int^{t+l}_{t}
\frac{(kl)^{\beta -1}}{1+k^{\gamma}l^{\gamma}}  \Biggl(\frac{F(l,t)}{F(l,-x+kl)}\Biggr)^{1/\alpha}\, dx \leq 1,
$$
if $p=1.$ If the estimate
\eqref{rajasad} holds, then condition \eqref{radio121} holds provided that, for every $x\in {\mathbb R}$ and $l>0,$ we have
$$
\sum_{k=0}^{\infty}e^{-ck}k^{\beta-1}\bigl[ F(l,-x+kl )\bigr]^{(-1)/\alpha}<\infty,
$$
while condition \eqref{radio121123} holds provided that, for every $t\in {\mathbb R}$ and $l>0,$ we have
$$
\int^{t+l}_{t}
\Biggl( e^{-ck}(kl)^{\beta -1}\Biggl(\frac{F(l,t)}{F(l,-x+kl)}\Biggr)^{1/\alpha}\Biggr)^{p}\, dx \leq 1.
$$
\end{example}

\begin{example}\label{ne-mogu-jog}
Suppose that condition (A2) holds. If the estimate \eqref{isti-ee} holds, then condition
\eqref{radio121okee} holds provided that
$$
\sum_{k=0}^{\infty}k^{\beta-1-\gamma}F(l,-x+kl)^{-1}<\infty ,
$$
while condition \eqref{radio1211212345} holds provided that, for every $t\in {\mathbb R}$ and $l>0,$ we have
$$
\int^{t+l}_{t}
\Biggl( \Biggl(\frac{1}{1+k^{q\gamma}l^{q\gamma}}(k+1)^{(\beta-1)q}l^{(\beta-1)q+1}\Biggr)^{1/q}\frac{F(l,t)}{F(l,-x+kl)}\Biggr)^{p}\, dx \leq 1,
$$
if $p>1,$ resp.
$$
\int^{t+l}_{t}
\frac{(kl)^{\beta -1}}{1+k^{\gamma}l^{\gamma}}  \frac{F(l,t)}{F(l,-x+kl)}\, dx \leq 1,
$$
if $p=1.$ If the estimate
\eqref{rajasad} holds, then \eqref{radio121okee} holds provided that, for every $x\in {\mathbb R}$ and $l>0,$ we have
$$
\sum_{k=0}^{\infty}e^{-ck}k^{\beta-1}\bigl[ F(l,-x+kl )\bigr]^{(-1)}<\infty,
$$
while
condition \eqref{radio1211212345} holds provided that, for every $t\in {\mathbb R}$ and $l>0,$ we have
$$
\int^{t+l}_{t}\Biggl( e^{-ck}(kl)^{\beta -1}\frac{F(l,t)}{F(l,-x+kl)}\Biggr)^{p}\, dx \leq 1.
$$
\end{example}

\subsection{Some applications}\label{nekiprimeri}

In this subsection, we will briefly explain how one can provide certain applications of obtained theoretical results to abstract degenerate fractional differential inclusions.
For further information about fractional calculus and fractional differential equations, we refer to 
\cite{bajlekova}, \cite{Diet}, \cite{kilbas}, \cite{knjigaho}; for abstract degenerate integro-differential equations and inclusions, we refer to \cite{faviniyagi}, \cite{FKP} and \cite{svir-fedorov}.

We need to remind ourselves of the following definitions of Caputo fractional derivatives and Weyl-Liouville fractional derivatives.
Suppose $\zeta \in (0,1)$ and $u :[0,\infty) \rightarrow X$ satisfies, for every $T>0,$ $u \in C((0,T]: X),$ $u(\cdot)-u(0) \in L^{1}((0,T) : X)$
and $g_{1-\zeta}\ast (u(\cdot)-u(0)) \in W^{1,1}((0,T) : X).$ Then the Caputo fractional derivative $
{\mathbf
D}_{t}^{\zeta}u(t)$ is defined by
$$
{\mathbf
D}_{t}^{\zeta}u(t):=\frac{d}{dt}\Biggl[g_{1-\zeta}
\ast \Bigl(u(\cdot)-u(0)\Bigr)\Biggr](t),\quad t\in (0,T].
$$
The Weyl-Liouville fractional derivative
$D_{t,+}^{\zeta}u(t)$ of order $\zeta$ is defined for those continuous functions
$u : {\mathbb R} \rightarrow X$
such that $t\mapsto \int_{-\infty}^{t}g_{1-\zeta}(t-s)u(s)\, ds,$ $t\in {\mathbb R}$ is a well-defined continuously differentiable function, by
$$
D_{t,+}^{\zeta}u(t):=\frac{d}{dt}\int_{-\infty}^{t}g_{1-\zeta}(t-s)u(s)\, ds,\quad t\in {\mathbb R}.
$$
Set $
{\mathbf D}_{t}^{1}u(t):=(d/dt)u(t)$ and $
D_{t,+}^{1}u(t):=-(d/dt)u(t).$

Of importance is the following abstract Cauchy relaxation differential inclusion
\begin{align}\label{left-bruka}
D_{t,+}^{\zeta}u(t)\in -{\mathcal A}u(t)+g(t),\ t\in {\mathbb R},
\end{align}
where $0<\zeta \leq 1,$ the function  $g: {\mathbb R} \rightarrow X$ satisfies certain assumptions and ${\mathcal A}$ is a closed MLO. If ${\mathcal A}$ satisfies condition \cite[(P)]{nova-mono}, then there exists a strongly
continuous operator family $(R_{\zeta}(t))_{t>0}$ satisfying the esimate of type \eqref{isti-ee}, in the case $\zeta \in (0,1),$ or estimate of type \eqref{rajasad}, in the case $\zeta=1.$ Since a mild solution of \eqref{left-bruka}
is given by $t\mapsto u(t)\equiv \int^{t}_{-\infty}R_{\zeta}(t-s)g(s),$ $t\in {\mathbb R},$ applications of results obtained in this section are straightforward.

Of importance is also the following fractional relaxation inclusion
\[
\hbox{(DFP)}_{f,\zeta} : \left\{
\begin{array}{l}
{\mathbf D}_{t}^{\zeta}u(t)\in {\mathcal A}u(t)+f(t),\ t> 0,\\
\quad u(0)=u_{0},
\end{array}
\right.
\]
where $0<\zeta \leq 1,$ $u_{0} \in X,$ the function  $f: [0,\infty) \rightarrow X$ satisfies certain assumptions and ${\mathcal A}$ is a closed MLO. If ${\mathcal A}$ satisfies condition \cite[(P)]{nova-mono}, then
there exists a strongly
continuous operator family $(S_{\zeta}(t))_{t>0}$ satisfying the esimate of type \eqref{isti-ee}, in the case $\zeta \in (0,1),$ or estimate of type \eqref{rajasad}, in the case $\zeta=1,$ such that the unique mild solution of problem $\hbox{(DFP)}_{f,\zeta}$
is given by $t\mapsto u(t)\equiv S_{\zeta}(t)u_{0}+ \int^{t}_{0}S_{\zeta}(t-s)f(s),$ $t\geq 0,$
where $u_{0}$ belongs to the continuity set of $(S_{\zeta}(t))_{t>0},$ i.e., $\lim_{t\rightarrow 0+}S_{\zeta}(t)u_{0}=u_{0}.$ Moreover, $\lim_{t\rightarrow +\infty}S_{\zeta}(t)u_{0}=0$
and Proposition \ref{finite} can be straightforwardly applied.

\end{document}